\pdfoutput=1
 \documentclass[11pt]{amsart}
\usepackage{amsmath,amssymb}
\usepackage{graphicx}
\usepackage{amsfonts,amscd,latexsym}

\newcommand{\labell}[1] {\label{#1}}

\textwidth5.7in \textheight7.8in \voffset=-0.5in
\hoffset=-0.5in

\newcommand{\ts}{\textstyle}

\newcommand{\TL}{{\Tilde L}}
\newcommand{\TE}{{\Tilde E}}

\newcommand{\ocp}{{\ov{\C P}\,\!}}

\newcommand{\Rr}{{\mathcal R}}
\newcommand{\Ee}{{\mathcal E}}
\newcommand{\Ll}{{\mathcal L}}

\newcommand{\less}{{\smallsetminus}}
\newcommand{\Ff}{{\mathcal F}}

\newcommand{\un}{\underline}

\newcommand{\p}{{\partial}}
\newcommand{\al}{{\alpha}}

\newcommand{\be}{{\beta}}

\newcommand{\Om}{{\Omega}}
\newcommand{\om}{{\omega}}
\newcommand{\eps}{{\varepsilon}}
\newcommand{\ve}{{\epsilon}}
\newcommand{\de}{{\delta}}
\newcommand{\De}{{\Delta}}
\newcommand{\ga}{{\gamma}}

\newcommand{\io}{{\iota}}
\newcommand{\ka}{{\kappa}}
\newcommand{\la}{{\lambda}}

\newcommand{\si}{{\sigma}}

\newcommand{\Cc}{{\mathcal C}}
\newcommand{\Hh}{{\mathcal H}}

\newcommand{\Uu}{{\mathcal U}}

\newcommand{\Vv}{{\mathcal V}}

\newcommand{\Mm}{{\mathcal M}}
\newcommand{\Ss}{{\mathcal S}}

\newcommand{\ov}{\overline}

\renewcommand{\Hat}{\widehat}
\renewcommand{\Tilde}{\widetilde}

\newcommand{\Aa}{{\mathcal A}}
\newcommand{\Xx}{{\mathcal X}}

\newcommand{\Q}{{\mathbb Q}}
\newcommand{\R}{{\mathbb R}}
\newcommand{\C}{{\mathbb C}}

\newcommand{\Aut}{{\rm Aut}}

\newcommand{\Z}{{\mathbb Z}}

\newcommand{\Symp}{{\rm Symp}}

\newcommand{\Diff}{{\rm Diff}}

\newcommand{\Nn}{{\mathcal N}}
\newcommand{\Jj}{{\mathcal J}}

\newcommand{\Gg}{{\mathcal G}}

\newcommand{\ev}{{\rm ev}}
\newcommand{\bp}{{\bf p}}
\newcommand{\HE}{{\Hat E}}
\newcommand{\HL}{{\Hat L}}
\newcommand{\SSS}{{\smallskip}}
\newcommand{\QED}{{\hfill $\Box$\MS}}

\newtheorem{theorem}{Theorem}[section]
\newtheorem{thm}[theorem]{Theorem}

\newtheorem{cor}[theorem]{Corollary}
\newtheorem{lemma}[theorem]{Lemma}

\newtheorem{prop}[theorem]{Proposition}

\newtheorem{defn}[theorem]{Definition}
\newtheorem{example}[theorem]{Example}

\newtheorem{rmk}[theorem]{Remark}

\numberwithin{figure}{section}
\numberwithin{equation}{section}
\numberwithin{table}{section}

\newcommand{\MS}{{\medskip}}

\newcommand{\NI}{{\noindent}}

\begin{document}
 \title{Some $6$ dimensional Hamiltonian $S^1$-manifolds}
 \author{Dusa McDuff}\thanks{partially supported by NSF grant DMS 0604769.}
\address{Department of Mathematics,
Barnard College, Columbia University, New York, NY 10027-6598, USA.}
\email{dmcduff@barnard.edu}
\keywords{weighted projective space,  Hamiltonian $S^1$-action, Fano $3$-fold, symplectic orbifold, weighted blow up}
\subjclass[2000]{53D05, 53D20, 57S05, 14J30}
\date{August 26, 2008, revised May 13 2009}
\begin{abstract}
In an earlier paper we explained how to convert the problem of symplectically embedding one $4$-dimensional ellipsoid into another into the problem of embedding a certain set of 
disjoint balls into $\C P^2$ by using a new way to desingularize 
orbifold blow ups $Z$ of the weighted projective 
space $\C P^2_{1,m,n}$.
We now use a related method to construct symplectomorphisms
of these spaces $Z$.   
This allows us to construct some 
well known Fano $3$-folds (including the Mukai--Umemura $3$-fold)
in purely symplectic terms using a classification by Tolman of 
a particular class of Hamiltonian $S^1$-manifolds.
 We also show that (modulo scaling) these manifolds are uniquely determined by their fixed point data up to equivariant symplectomorphism. As part of this argument 
 we show that  the symplectomorphism group of a certain
  weighted blow up of a weighted projective plane is connected.
\end{abstract}

\maketitle

\tableofcontents

\section{Introduction}

\subsection{Statement of results}

In \cite{T}, Tolman considers the problem of
classifying all symplectic $6$-manifolds $(M,\Om)$ with a
Hamiltonian $S^1$ action in the case when $H^2(M;\Z)$ has rank $1$.
She proved that under these assumptions $H^*(M;\Z)$ is additively
 isomorphic to $H^*(\C P^3;\Z)=\Z$ and that there are 
 four possibilities for the number  $\ell: = 6-c_1(\be)$ where $\be$ is the  generator of $H_2(M)$ with $\om(\be)>0$. The two standard cases are $M_2=\C P^3$  and $M_3 = \Tilde{G}_\R(2,5)$, the Grassmannian of oriented $2$-planes in $\R^5$ 
(known to complex geometers as the quadric surface in $\C P^4$).

   However there are two other possibilities,
with $\ell = 4$ or $5$.  In the latter two cases Tolman showed that
$S^1$ must act with precisely $4$ fixed points $x_k, 1\le k\le 4,$ that have index $8-2k$ and isotropy weights $w_k$, where
$$
w_1=(-1,-2,-3), \quad w_2=(1,-1,-\ell),\quad w_3=(1,\ell,-1),\quad
w_4=(1,2,3).
$$
Moreover, the generating Hamiltonian $H$ (or moment map) can be chosen\footnote
{There are two choices here.  The first is to choose the additive constant for $H$ so that it is symmetric about $0$, and the second is to scale the symplectic form so that it equals $c_1(M)$.}
 to have critical levels
$$
H(x_1) = 6,\quad H(x_2) = \ell,\quad H(x_3) = -\ell,\quad H(x_4) = -6,
$$
and the integral  cohomology ring $H^*(M_\ell;\Z)$ must have the following form:
if $x\in H^2$ and $ y\in H^4$ are generators such that $x(\be)=1$ and $x y$ generates $H^6$, then
$$
x^2=5y \mbox{  when }\ell=4,\;\;\mbox{and}  \;x^2=22y
\mbox{  when }\ell=5.
$$
Tolman showed that this data satisfies many consistency checks.
However she left open the question as to whether  manifolds 
with $\ell=4,5$ actually exist.   

It turns out that these manifolds are well known to complex geometers.
Any Hamiltonian $S^1$ manifold contains $2$-spheres on which $\om$ is positive; take the $S^1$-orbit of any $g_J$-gradient flow line of 
the moment map $H$, where $g_J: =\om(\cdot,J\cdot)$ is defined using a compatible almost complex structure $J$.  Hence, if complex, these manifolds  
would be Fano $3$-folds with $b_2=1$ and $b_3=0$.
Such manifolds are classified (see \cite[Ch~12]{IP}).  There are precisely four families, corresponding to the four cases $\ell=2,3,4,5$ discussed above.  
Rather than using the number $\ell$,
algebraic geometers  
distinguish them by their index  $r: = c_1(\be) = 6-\ell$.
When $r=4$ one has    $\C P^3$ and when $r=3$ the quadric.   There
is a unique  complex manifold $V_5$ (also sometimes called $B_5$)
 with index $2$ which is rigid (i.e. its complex structure does not deform); it supports a nontrivial action of 
$SL(2,\C)$.  In contrast, when $r=1$ there is a family $V_{22}$  of   manifolds.  As shown by Prokhorov~\cite{Pro},  
there is a unique member of this family  $V^s_{22}$ with
a nontrivial $SL(2,\C)$ action, another unique member $V_{22}^a$ with an action of $\C$ and a family $V_{22}^m$ depending on one rational parameter with an action of $\C^*$.
The manifold $V^s_{22}$ was first constructed by 
Mukai--Umemura \cite{MU}
and is of particular interest to geometers because of its 
K\"ahler--Einstein metrics; cf. Donaldson \cite{D} for example.

In this paper we construct
 the manifolds $M_4=V_5$ and $M_5=V_{22}$ in purely symplectic terms.
 We also show that they admit complex structures that are invariant 
 under an $S^1$ action and hence under a $\C^*$ action.
 Because they are Fano, they also have 
$S^1$-invariant
K\"ahler structures induced by the embeddings into projective space
provided by sections of high enough powers of the anticanonical bundle.
Our method does not exhibit the $SO(3)$ action (but see Remark~\ref{rmk:so3} and  \cite[\S5.2]{D}).

\begin{thm}\labell{thm:main} {\rm (i)} When $\ell = 4,5$, there are
Hamiltonian $S^1$ manifolds $(M_\ell,\Om)$ 
with the properties described above.  Modulo scaling, they are unique up to $S^1$-equivariant symplectomorphism.\SSS

\NI {\rm (ii)} Moreover these manifolds may be given
an $S^1$-invariant complex structure. This is unique when $\ell=4$, and depends on a rational parameter when $\ell=5$.
\end{thm}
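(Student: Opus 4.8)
The plan is to reconstruct $M_\ell$ from the family of symplectic reduced spaces $M_t := H^{-1}(t)/S^1$, using that $H$ is a Morse function whose critical points are exactly the four fixed points, with Morse index equal to twice the number of negative weights. Thus $x_4$ (weights $(1,2,3)$) is the minimum and $x_1$ (weights $(-1,-2,-3)$) the maximum, while $x_3$ and $x_2$ are interior critical points. Just above the minimum the reduced space is the weighted projective plane $\C P^2_{1,2,3}$ (the symplectic quotient of the isotropy representation), and just below the maximum it is again $\C P^2_{1,2,3}$. Crossing a critical value replaces the reduced space by a weighted blow up or blow down determined by the signs of the weights: at $x_3$, with one negative weight $-1$ and positive weights $1,\ell$, the transition is a weighted blow up of a smooth point with exceptional divisor $\C P^1_{1,\ell}$; at $x_2$, with negative weights $-1,-\ell$ and positive weight $1$, it is the inverse blow down. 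Hence the central reduced space, for $-\ell<t<\ell$, is the space $Z$ of the abstract: the $(1,\ell)$-weighted blow up of $\C P^2_{1,2,3}$ at a smooth point.

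For existence I would build $M_\ell$ as the union of a minimum cap $H^{-1}([-6,-\ell])$, a central piece $H^{-1}([-\ell,\ell])$ fibred over $Z$, and a maximum cap $H^{-1}([\ell,6])$, with the two flips realizing the passages across $x_3$ and $x_2$. The caps are the standard local models, namely weighted cones on $\C P^2_{1,2,3}$, and over the interiors of the intervals the space is determined by Duistermaat--Heckman data: the reduced symplectic forms vary affinely in $t$, with the Chern class of the circle bundle $H^{-1}(t)\to M_t$ locally constant and jumping only at the walls. Piecing these together produces a Hamiltonian $S^1$-manifold with the prescribed fixed point data; the cohomology relations $x^2=5y$ $(\ell=4)$ and $x^2=22y$ $(\ell=5)$ reappear as a volume and self-intersection check on the reconstructed $\Om$, matching Tolman's data. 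The sizes of the two caps and of the exceptional divisor are forced by the moment-map values $\pm6,\pm\ell$, so there is no freedom here beyond the overall scale.

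The uniqueness statement is where the real work lies. Given two manifolds with identical fixed point data, the families of reduced spaces and their symplectic forms are forced to agree, so an equivariant symplectomorphism is obtained by matching the gluing data of the caps and of the two flips. The ambiguity in this matching is controlled by $\pi_0$ of the equivariant symplectomorphism group of the central reduced space $Z$: if this group is connected, any two gluings are isotopic and the symplectomorphism exists. The main obstacle is therefore to prove that $\Symp(Z)$ is connected. I would do this by the desingularization method of the earlier paper, replacing the orbifold $Z$ by a genuine blow up of $\C P^2$ (a del Pezzo-type surface) whose symplectomorphism group is known to be connected, and handling the orbifold points at the $\Z/2,\Z/3,\Z/\ell$ singularities by an inflation argument. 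This is the step most likely to require delicate analysis.

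For part (ii), note that $\C P^2_{1,2,3}$ and its weighted blow up $Z$ are toric, hence carry canonical $S^1$-invariant integrable complex structures. I would complexify the circle action to $\C^*$ and reconstruct $M_\ell$ holomorphically, gluing the two caps to the central $\C^*$-bundle over $Z$ by $\C^*$-equivariant biholomorphisms; integrability is automatic since near each fixed point one recovers the standard structure on weighted $\C^3$. The moduli of such gluings then reduces to the choice of the point blown up in the flip, taken modulo the automorphisms of $Z$. Counting this, I expect the choice to be rigid when $\ell=4$, giving the unique Fano $V_5$, and to leave one rational parameter when $\ell=5$, matching Prokhorov's family $V^m_{22}$ and its degenerations. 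This parameter count is the subtle point of part (ii), and it is exactly what distinguishes the two cases.
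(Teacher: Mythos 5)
Your identification of the reduced spaces, of the wall--crossings at $x_3,x_2$, and your outline of uniqueness (rigidity of $Z$, with connectedness of its symplectomorphism group proved by desingularizing to a blow up of $\C P^2$ plus inflation) are all consistent with the paper. But the existence argument has a genuine gap, and it sits exactly where the paper's main construction lives. You treat the capping at level $+\ell$ as automatic, implicitly assuming that the exceptional divisor created at $x_3$ is the one collapsed at $x_2$. It is not: by Duistermaat--Heckman the area of that divisor at level $\ka$ is $\frac{\ell+\ka}{\ell}$ (Lemma~\ref{le:T} with the normalization (\ref{eq:int})), which equals $2$, not $0$, at $\ka=\ell$; so in your family nothing shrinks at the top wall and the blow-down model for $x_2$ cannot be attached. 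The divisor that must collapse lies in a different homology class --- on the resolution $X_{\ell+3}$ it is one of the classes $\HE_i$, $\TE_i$ of (\ref{eq:Hat})--(\ref{eq:Til}), e.g.\ $3L-2E_i-\sum_{j\ne i}E_j$, cf.\ Remark~\ref{rmk:Bert}(i) --- and exhibiting the reduced space near level $\ell$ as a weighted blow-down along such a divisor is equivalent to producing a symplectomorphism $\phi_Z$ of $Z$ with $\phi_Z^*(e_Z(Y))=-e_Z(Y)$ (Lemma~\ref{le:eul}, Proposition~\ref{prop:ell}). That map descends from the Geiser ($k=7$) / Bertini ($k=8$) involution of $X_{\ell+3}$ realized via Wall's theorem (Proposition~\ref{prop:psi}); your sketch contains no substitute for it. You also need, for $\ka$ close to $\ell$ where $(Z,\om_\ka)$ admits no toric model, the ellipsoid-embedding results of \cite{M} just to know that the required forms $\om_\ka$ exist; this is not supplied by ``Duistermaat--Heckman data''.

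Part (ii) inherits the same gap in complex-geometric form, together with an outright error: the toric complex structure $J_T$ on $Z$ cannot serve as the invariant structure on the middle piece. With $J_T$ the class $L-E_{4\dots k}$ has a holomorphic representative, and since it pairs negatively with the primed exceptional classes (for instance $(L-E_{4567})\cdot\bigl(3L-2E_4-\sum_{m\ne 4}E_m\bigr)=-2$), the curves one must contract to cross the top critical level are not $J_T$-holomorphic, so the holomorphic gluing fails there exactly as the symplectic one did; see Remark~\ref{rmk:ZT}. The paper instead constructs very special non-toric structures (Lemmas~\ref{le:J7} and \ref{le:J8}), obtained from a configuration of a line, a nodal cubic and a conic in $\C P^2$ by directed blow ups, for which \emph{both} systems of curves $E_3,E_k$ and $E_3',E_k'$ are holomorphic. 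The uniqueness for $\ell=4$ and the rational parameter for $\ell=5$ then come from the projective classification of these configurations (the parameter being the blow-up point $q$ on the conic $Q$), not from counting ``choices of the point blown up in the flip modulo automorphisms of $Z$''.
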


The only new statement above is the uniqueness part of (i).
Its proof takes the approach proposed by Gonzalez \cite{Gonz} and relies on Theorem~\ref{thm:rigid} which states that the reduced spaces are \lq\lq rigid", i.e. that their symplectic structures are unique in a fairly strong sense.
The construction of the complex structures in (ii) is rather different from those in the original papers (cf. \cite{MU}), and provides a new
 perspective on the discussion of the $SO(3)$ action in 
 Donaldson \cite[\S5.2]{D}.

We analyze the symplectic structure of  $(M_\ell,\Om)$ via the family of reduced spaces.  As is explained in more detail below, these reduced spaces are $4$-dimensional symplectic orbifolds. Rather than looking at them directly as in Chen \cite{Ch}, we study them via their symplectic resolution as in McDuff \cite{M}. 
The resolution of the middle reduced level is the blow up  
$X_k$ of  $\C P^2$ at $k: = \ell + 3$ points, and our first construction
is based on the existence of certain elements of order two (the Geiser and Bertini involutions)  in the plane Cremona group; cf. Remark~\ref{rmk:Bert}(ii).  Although our method is applied here only in a special case, 
 in principle  it could be used  
to construct  any $6$ dimensional Hamiltonian 
$S^1$-manifold with isolated fixed points once one has a consistent set of fixed point data.  However, the uniqueness result uses the fact the resolution involves a relatively small number of blow ups, and may well not hold in general. Note also that
 the existence of complex structures on $M_\ell$ is established
by a somewhat different argument, one 
relying on the existence of very special complex 
structures that are invariant under analogs of the above involutions: see \S\ref{ss:com}.  
 
\subsection{Sketch of proof.}

We now sketch our argument in the symplectic case.
In \cite{God}, Godinho analysed the change in structure of the reduced spaces of a Hamiltonian $S^1$-manifold when one passes through a critical point of index or coindex $2$.  Her work implies that 
if the manifolds $M_\ell$ exist then the regular reduced spaces $(Z_\ka,\om_\ka)$ at level $\ka\in (-6,6)$ must be certain orbifold blow ups of weighted projective spaces.  Tolman worked out precisely 
what these reduced spaces must look like (see Lemma~\ref{le:T} below), and pointed out that the 
question of whether they actually 
 exist is equivalent to an  ellipsoidal embedding problem.
 The latter problem was solved in \cite{M}.  It follows immediately that  the 
sub- and super-level sets 
$$
\bigl(M_\ell^{\le 0},\Om\bigr): = \bigl(H^{-1}([-6,0]),\Om\bigr),\quad
\bigl(M_\ell^{\ge 0},\Om\bigr):=\bigl(H^{-1}([0,6]),\Om\bigr)
$$ 
of $M_\ell, \ell=4,5,$ also exist. 
Therefore all we need to do  is  glue
 the boundary of $\bigl(M_\ell^{\le 0},\Om\bigr)$ to that of $\bigl(M_\ell^{\ge 0},\Om\bigr)$.  
 
 If $Z_0$ had no singularities, this would amount to constructing the symplectic sum of the cut symplectic manifolds $(M^-,Z^-,\om^-)$ and $(M^+,Z^+,\om^+)$ along the copies
 $Z^-, Z^+$ of $Z_0$, where $M^-$, for example, is obtained from 
 $M^{\le 0}$
 by collapsing each $S^1$ orbit in its boundary to a point in $Z^-$.
 For this sum operation to be possible we need there to be a 
 symplectomorphism $(Z^-,\om^-) \to (Z^+,\om^+)$ that reverses the sign of  the Euler class of the normal bundles.    In the case at hand,
the boundary $(Y^-, \Om^-): = (H^{-1}(0), \Om)\subset M^{\le0}$
is the (smooth)  total space of a principal
  $S^1$-orbibundle $\pi: (Y^-,\Om^-)\to (Z^-,\om_0^-)$ 
  over the reduced space 
 $(Z^-,\om_0^-): = (Y^-/S^1,\om^-)$,
 which is a symplectic orbifold whose singular set $\bp$ consists of $3$ 
 points.   It is not hard to see that the orbibundle $Y^-\to Z^-$ is determined by its restriction to $Z^-\less\bp$.  Since the latter is a circle bundle,
 it is in turn
  determined by its Euler class  $e(Y^-)\in H^2(Z^-\less\bp;\Z)$.   
 But, as we shall see in 
 \S\ref{ss:wp}, $H^2(Z^-\less \bp; \Z)$ is a free abelian group, and the restriction map $H^2(Z^-;\Q)\to H^2(Z^-\less \bp; \Q)$ is an isomorphism.  
 Hence the orbibundle $Y^-\to Z^-$ is determined by the unique class 
 $e_Z(Y^-)\in H^2(Z^-;\Q)$ that restricts to $e(Y^-)$. 
 This leads to the following statement.

 \begin{lemma} \labell{le:eul} To construct $(M_\ell,\Om)$ as a Hamiltonian $S^1$-manifold,  it suffices to find a symplectomorphism  
 $\phi_Z:(Z^-,\om_0^-)\to (Z^+,\om_0^+)$ such that
 $\phi_Z^*(e_Z(Y^+)) = -e_Z(Y^-)$.
 \end{lemma}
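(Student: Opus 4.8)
The plan is to reconstruct $(M_\ell,\Om)$ by gluing the two building blocks $\bigl(M_\ell^{\le 0},\Om\bigr)$ and $\bigl(M_\ell^{\ge 0},\Om\bigr)$, whose existence is already granted, along an $S^1$-equivariant identification of their boundaries $Y^-$ and $Y^+$ that covers the given $\phi_Z$. The organizing principle is that near a regular level a Hamiltonian $S^1$-manifold has a standard \emph{minimal coupling} normal form: a neighbourhood of $Y^{\pm}$ is equivariantly symplectomorphic to $Y^{\pm}\times(-\eps,0]$ (resp. $[0,\eps)$) carrying $\Om=\pi^*\om_0^{\pm}+d(H\theta^{\pm})=\pi^*\om_0^{\pm}+dH\wedge\theta^{\pm}+H\,\pi^*F^{\pm}$, where $\theta^{\pm}$ is a connection $1$-form for the orbibundle $Y^{\pm}\to Z^{\pm}$, $F^{\pm}$ its curvature, and $H$ the moment map. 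Since the two halves meet along the common regular value $0$, the glued object will be a closed Hamiltonian $S^1$-manifold provided the two collar forms agree under the boundary identification.

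First I would produce the boundary gluing map. By the discussion in \S\ref{ss:wp}, each orbibundle $Y^{\pm}\to Z^{\pm}$ is classified up to isomorphism by its class $e_Z(Y^{\pm})\in H^2(Z^{\pm};\Q)$. Pulling $Y^+$ back along $\phi_Z$ gives an orbibundle over $Z^-$ of class $\phi_Z^*\bigl(e_Z(Y^+)\bigr)=-e_Z(Y^-)$, which is exactly the class of $Y^-$ with its fibre orientation reversed; hence the hypothesis supplies an equivariant bundle isomorphism $\Psi\colon Y^-\to Y^+$ covering $\phi_Z$. The minus sign is forced by the opposite co-orientations of $Y^-$ and $Y^+$ as level sets inside the two halves: the outward conormal is $+dH$ along $\p M_\ell^{\le 0}$ but $-dH$ along $\p M_\ell^{\ge 0}$, so assembling an oriented closed manifold from the two pieces identifies their boundary circle bundles with a sign reversal, which flips $e_Z$.

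It then remains to match the symplectic normal forms. I would choose the connections $\theta^{\pm}$ so that their curvatures represent $e_Z(Y^{\pm})$ and, using $\Psi$, arrange $\Psi^*\theta^+=\theta^-$; since $\phi_Z$ is a symplectomorphism we already have $\phi_Z^*\om_0^+=\om_0^-$, and the Euler-class condition makes the $H$-linear curvature terms compatible as well. With these choices the two collar forms $\pi^*\om_0^{\pm}+d(H\theta^{\pm})$ are carried into one another across the seam $Y^-\cong Y^+$, so the glued $2$-form $\Om$ is smooth and nondegenerate there and $H$ extends to a global moment map with image $[-6,6]$. The fixed-point set and its isotropy data are inherited unchanged from $M_\ell^{\le 0}$ and $M_\ell^{\ge 0}$, so the resulting $(M_\ell,\Om)$ has the properties listed before Theorem~\ref{thm:main}; this proves the lemma.

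The main obstacle is this last step: upgrading the \emph{cohomological} Euler-class condition to an honest equivariant symplectomorphism of collar neighbourhoods. Two connections with the same curvature class differ by a closed $1$-form, so aligning the actual curvature forms (not merely their classes) and then absorbing the remaining discrepancy between the collar models requires an equivariant Moser argument. This is delicate precisely at the three singular points $\bp$, where $H^2(Z^{\pm})$ is controlled only rationally and the $S^1$-action on $Y^{\pm}$ is non-free; there, however, the action is locally the standard linear model on a smooth chart, for which equivariant Darboux applies, so the matching extends across $\bp$ and the gluing goes through.
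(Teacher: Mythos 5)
Your proposal is correct in substance, and it rests on the same underlying fact as the paper's proof --- the normal form for a collar of a regular level, which is precisely the content of Lemma~\ref{le:reg} (whose proof writes $\Om=\pi^*(\om_\ka)+\al_\ka\wedge d\ka$, i.e.\ your minimal coupling form) --- but the packaging is genuinely different. You glue \emph{upstairs}: you extract an orbibundle isomorphism $\Psi:Y^-\to Y^+$ from the Euler-class hypothesis, match connection $1$-forms, and finish with an equivariant Moser argument on the collars. The paper glues \emph{downstairs}: it normalizes the reduced forms of $M^{\le 0}$ near the boundary to $\si_\ka=\om_0+\ka\la$, $\ka\in(-\de,0]$, with $[\la]=-e(Y)$, then uses $\phi_Z$ together with a Moser family $f_\ka$ of diffeomorphisms of $Z$ to transport the reduced forms of $M^{\ge 0}$ into the family $\om_0+\ka\la$, $\ka\in[0,\de)$; since the two families concatenate into one smooth family over $(-\de,\de)$, two applications of Lemma~\ref{le:reg} (uniqueness for each collar, existence for the concatenated family) complete the gluing without ever exhibiting $\Psi$ or choosing connections. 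What the paper's route buys is that the delicate points you flag at the end --- equality of curvature \emph{representatives} rather than classes, and the behaviour at $\bp$ --- are handled by a single correction $1$-form $\ga$, defined by $\phi_Z^*(\la)=-\la+d\ga$ and taken to vanish on $\Nn$ using $H^2(Z)=H^2(Z,\Nn)$; in your route these must be done by hand, and your appeal to equivariant Darboux near $\bp$ should be supplemented by exactly this kind of cohomological normalization so that all corrections are supported away from the singular points. One point deserves more care than you give it: the equivariance of $\Psi$ hinges on the sign convention implicit in $e_Z(Y^+)$. Since $M^{\ge 0}$ is by construction $M^{\le 0}$ with the reversed action, the circle action on $Y^+$ is opposite to that on $Y^-$ under the tautological identification of the two halves, and the minus sign in the hypothesis exactly compensates for this reversal; your co-orientation remark is the right explanation, but it should be made precise, because with the opposite reading of $e_Z(Y^+)$ the identity map would appear to satisfy the hypothesis, and gluing by the identity manifestly fails (the two actions disagree along the seam). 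This is the same point the paper encodes, more safely, in the equation $\phi_Z^*(\la)=-\la+d\ga$.
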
   
 
 Therefore the first part of the following result gives the existence statement of Theorem~\ref{thm:main} (i), while the second part will imply the uniqueness statement via Lemma~\ref{le:reg}.
  
\begin{prop}\labell{prop:ell} {\rm (i)} For $\ell=4,5$, there is a symplectomorphism $\phi_Z:(Z^-,\om_0^-)\to (Z^+,\om_0^+)$ such that
$\phi_Z^*(e_Z(Y^+)) = -e_Z(Y^-)$. \SSS

\NI {\rm (ii)}  Moreover $\phi_Z$ is unique up to symplectic isotopy.
\end{prop}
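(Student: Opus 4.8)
The plan is to build $\phi_Z$ from the classical Geiser and Bertini involutions, working on the symplectic resolution of $Z_0$, and then to derive the uniqueness in (ii) from the connectedness of the symplectomorphism group of $Z_0$. Recall from Lemma~\ref{le:T} and the construction of \cite{M} that the central reduced orbifold $Z_0$, of which $Z^-$ and $Z^+$ are two copies, has a symplectic resolution $\pi\colon X_k\to Z_0$, where $X_k$ is the blow up of $\C P^2$ at $k=\ell+3$ points. Thus $X_7$ is a del Pezzo surface of degree $2$ when $\ell=4$, and $X_8$ is a del Pezzo surface of degree $1$ when $\ell=5$. In Tolman's normalization $\Om=c_1(M)$ the reduced class $[\om_0]$ is a positive multiple of the (orbifold) anticanonical class of $Z_0$, while, as explained in \S\ref{ss:wp}, the Euler classes $e_Z(Y^\pm)$ live in $H^2(Z_0;\Q)\cong H^2(Z_0\less\bp;\Q)$.

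The first step is to realize the involution symplectically. Each of these del Pezzo surfaces carries a holomorphic involution $\tau$, the Geiser involution in degree $2$ and the Bertini involution in degree $1$ (see Remark~\ref{rmk:Bert}), whose induced action on $H^2(X_k)$ fixes the canonical class $K$ and equals $-\id$ on the primitive lattice $K^\perp$. Because $[\om_0]$ is anticanonical it lies in the $(+1)$-eigenspace of $\tau^*$, so $[\om_0]$ is $\tau$-invariant; hence, after replacing $\tau$ by an isotopic map (for instance by choosing a $\tau$-invariant compatible form, or by appealing to the uniqueness of symplectic forms on a rational surface within a fixed cohomology class), we may take $\tau$ to be a symplectomorphism of $(X_k,\om_0)$. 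The essential compatibility to verify is that $\tau$ preserves, as a set, the union of exceptional spheres collapsed by $\pi$; granting this, $\tau$ descends to a symplectomorphism $\phi_Z$ of $Z_0$. This last point is a finite computation inside the Weyl group $W(E_7)$, respectively $W(E_8)$, and I expect it to be routine once the collapsed configuration has been identified.

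Given the descent, part (i) becomes a short cohomological computation in $H^2(Z_0;\Q)$. The symmetry $H\mapsto -H$ of the fixed point data identifies $Y^-\to Z_0$ with $Y^+\to Z_0$ in such a way that $e_Z(Y^+)$ is carried to $e_Z(Y^-)$ and both lie in (the image of) $K^\perp$; since $\phi_Z$ acts there as $\tau^*=-\id$, we obtain $\phi_Z^*(e_Z(Y^+))=-e_Z(Y^-)$, which is precisely the condition required. This proves (i).

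For the uniqueness in (ii), suppose $\phi_Z$ and $\phi_Z'$ both satisfy the conclusion of (i). Then $\psi:=\phi_Z^{-1}\circ\phi_Z'$ is a genuine self-symplectomorphism of $(Z^-,\om_0^-)$, and it suffices to show that $\psi$ is symplectically isotopic to the identity, for then $\phi_Z'=\phi_Z\circ\psi$ is isotopic to $\phi_Z$. This reduces the whole of (ii) to the statement that $\Symp(Z_0)$ is connected, which then yields the uniqueness via Lemma~\ref{le:reg}. I expect this connectedness to be the main obstacle: for blow ups of $\C P^2$ at seven or eight points the smooth symplectic mapping class group is in general far from trivial, so the proof must use the specific orbifold structure of $Z_0$ together with the anticanonical form $\om_0$ to rigidify the situation. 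I would establish it by a Gromov-type argument, lifting to the resolution $X_k$ (or working directly with orbifold curves), analyzing the moduli of $J$-holomorphic spheres in the exceptional classes, and showing that the relevant space of compatible almost complex structures is connected; this is exactly the connectedness result announced in the introduction.
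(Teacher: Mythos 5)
Your part (ii) is essentially the paper's argument: $\psi:=\phi_Z^{-1}\circ\phi_Z'$ preserves both $[\om_0^-]$ and $e_Z(Y^-)$, which span $H^2(Z;\Q)$, and since $H_2(Z;\Z)$ is free (Lemma~\ref{le:H2}) this puts $\psi$ in $\Symp_H(Z,\om_0^-)$, whose connectedness is condition (c) of Theorem~\ref{thm:rigid}. The genuine gap is in part (i), and it is an internal contradiction rather than a missing detail. The resolution $X_k$ of $Z$ is \emph{not} a del Pezzo surface: it is a blow up of $\C P^2$ at infinitely near points, and it contains the three chains $\Cc$ of $-2$-spheres, in the classes $L-E_{123}$, $E_1-E_2,\,E_2-E_3$, $E_4-E_5,\dots,E_{k-1}-E_k$, which $\Phi_J$ collapses to the singular points of $Z$. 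All of these classes lie in $K^\perp$, so your two demands on $\tau$ are mutually exclusive: if $\tau^*=-\id$ on $K^\perp$ then $\tau_*(E_1-E_2)=E_2-E_1$ (compare (\ref{eq:Hat}): $\HE_1-\HE_2=E_2-E_1$), whereas any biholomorphism, or any symplectomorphism for the resolution forms $\tau_{\la,\eps}$ of \S\ref{ss:Z} (which give each sphere of $\Cc$ area $\eps>0$), that maps $\Cc$ into itself must \emph{permute} these classes, since a holomorphic or symplectic sphere cannot acquire negative area. A holomorphic involution acting by $-\id$ on $K^\perp$ exists only when the $k$ points are in general position, and that surface contains no configuration $\Cc$ at all; the Geiser/Bertini involutions of the non-generic surfaces relevant here act on $K^\perp$ by permuting the effective $-2$-classes, not by negating them. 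So the ``routine finite computation inside the Weyl group'' that you defer comes out negative, and your $\tau$ does not descend to $Z$.

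This is exactly why the paper's existence proof has a different shape. Its $\psi$ (Proposition~\ref{prop:psi}) realizes precisely your homological involution, but as a diffeomorphism that does \emph{not} preserve $\Cc$: it carries $\Cc$ to a second configuration $\Cc'=\psi(\Cc)$, whose contraction is a second orbifold $Z'$, and the rigidity statement Proposition~\ref{prop:Zuniq} is then indispensable \emph{already for existence}, to produce the correcting diffeomorphism $g$ with $g^*(\psi_Z^*\om_0')=\om_0$ and hence a genuine symplectomorphism onto $(Z^+,\om_0^+)$. Note also that your preliminary step of making $\tau$ symplectic for a form in the anticanonical class cannot be combined with preservation of $\Cc$: $-K$ evaluates to zero on every sphere of $\Cc$, so for such forms the resolution configuration is not symplectic at all; the forms that do resolve $Z$ symplectically are the $\tau_{\la,\eps}$, whose class is not anticanonical. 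If you insist on an honest ``involution preserving the collapsed curves,'' the correct move is the one made in \S\ref{ss:com}: use the relabelled classes $L',E_i'$ of (\ref{eq:Cc'}), for which the collapsed set $\Hh_0'$ literally equals $\Hh_0$ while $\eps_k$ is still reversed (Lemma~\ref{le:al}), and construct the special complex structures of Lemmas~\ref{le:J7} and~\ref{le:J8} for which all these classes are represented; but there the action on $K^\perp$ is emphatically not $-\id$, which is the point your proposal misses.
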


To prove this we resolve $Z$ as follows.  
Denote by $(X_k,J_0)$ the complex manifold obtained by  blowing up  $\C P^2$ at $k$ generic points, and by $L, E_i,1\le i\le k,$ 
 the classes of the line $\C P^1$ and the 
$k$  exceptional divisors.  We shall provide $(X_k,J_0)$ with a $J_0$-tame symplectic form  in the class 
\begin{equation}\labell{eq:tau}
[\tau] = 3a-\sum_{i=1}^k e_i = c_1(X_k,J_k),
\end{equation}
where $a, e_i$ are Poincar\'e dual to $L, E_i$ respectively.  In particular,
 $e_i(E_j) =- \de_{ij}$. 
 Note that it does not matter here how we choose $J_0$ or the symplectic form;
 by \cite{Mdef}, any choices give forms that are deformation equivalent and hence isotopic.
 Further, define 
\begin{eqnarray}\labell{eq:chi}
 \chi_7: &=& \ts{\frac 1{12}}\bigl(6a-\sum_{1\le i\le 3} 2e_i - \sum_{4\le i\le 7} 3e_i\Bigr)\quad\mbox{ on } X_7,\\\notag
 \chi_8: &=& \ts{\frac 1{30}} \bigl(15a-\sum_{1\le i\le 3} 5e_i - \sum_{4\le i\le 8} 6e_i\bigr)\quad\mbox{ on } X_8.  \end{eqnarray}

  We shall see in \S\ref{s:2} that there is a complex structure 
  $J (\ne J_0)$ on $X_{\ell+3}$ and a holomorphic blow down map 
  $\Phi_J:(X_{\ell+3},J)\to Z^-$ such that 
  $$
  \chi_{\ell+3} = - \Phi_J^*(e_Z(Y^-)),\quad [\tau]
   = \Phi_J^*([\om_0]).
  $$
    In fact, if one thinks of $Z^-$ as a complex orbifold, $\Phi_J$ is just a standard resolution of 
  its singularities; the results of  
  \cite{M} are needed only to understand the 
  symplectic structure of $Z^-$.   Similarly, there is a 
holomorphic blow down map    $\Phi_{J'}:(X_{\ell+3},J')\to Z^+$
such that
 $\chi_{\ell+3} =  \Phi_{J'}^*(e_Z(Y^+))$.  These facts, together with Proposition~\ref{prop:Zuniq} concerning  the uniqueness of symplectic forms on  $Z^{\pm}$, allow us to reduce the proof of Proposition~\ref{prop:ell} (i) to the following lemma.   
  
 \begin{lemma}\labell{le:2} For $k=7,8$, there is a diffeomorphism
$\psi:X_k\to X_k$  such that $\psi^*(\chi_k) = -\chi_k$.
\end{lemma}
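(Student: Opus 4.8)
The plan is to take $\psi$ to be the Geiser involution when $k=7$ and the Bertini involution when $k=8$; these are the involutions alluded to in Remark~\ref{rmk:Bert}(ii). First I would realize $X_k$ as the blow up of $\C P^2$ at $k$ points in general position, so that $(X_k,J_0)$ becomes a del Pezzo surface of degree $9-k$ (degree $2$ when $k=7$, degree $1$ when $k=8$). Since the underlying smooth manifold $\C P^2\#\,k\,\ocp^2$ is independent of the location of the blown up points, it is enough to construct $\psi$ on this one complex model. On a del Pezzo surface of degree $\le 2$ a suitable multiple of the anticanonical system defines a degree two morphism onto its image, and the classical Geiser (resp.\ Bertini) involution is the associated deck transformation. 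These are biregular involutions of $X_k$, hence in particular the required self-diffeomorphisms.

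The key input is the induced action of $\psi$ on $H^2(X_k;\Z)$. As it comes from a biholomorphism, $\psi^*$ is an orientation preserving isometry of the intersection form, and since $\psi$ is defined through the (anti)canonical system it fixes the canonical class $K=-c_1$. The standard fact about these two involutions is that $\psi^*$ moreover acts as $-\id$ on the orthogonal complement $K^\perp$; equivalently $\psi^*$ is the reflection
\[
\psi^*(v)=-v+\frac{2\,(v\cdot c_1)}{c_1\cdot c_1}\,c_1,
\]
which fixes $c_1$ and negates every class orthogonal to it. (On the root lattice $K^\perp\cong E_{9-k}$ this is just the statement that $-\id$ lies in the Weyl groups $W(E_7)$ and $W(E_8)$.)

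It remains only to verify that $\chi_k$ lies in $c_1^\perp$. Working in $H^2(X_k;\Q)$ (note $\chi_k$ carries the denominators $12$ and $30$), on which $\psi^*$ acts by the same reflection, this is a direct computation using $a\cdot a=1$, $e_i\cdot e_j=-\de_{ij}$, $a\cdot e_i=0$ and $c_1=3a-\sum_i e_i$: one finds $\chi_7\cdot c_1=\frac1{12}(18-6-12)=0$ and $\chi_8\cdot c_1=\frac1{30}(45-15-30)=0$. Substituting into the reflection formula gives $\psi^*(\chi_k)=-\chi_k$ in both cases, as required.

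Both the existence of the involutions and the orthogonality computation are routine. The one point that genuinely needs care is the identification of $\psi^*$ on cohomology: I expect the main work to be justifying that $\psi^*$ restricts to $-\id$ on $K^\perp$ (and not merely to some other isometry fixing $c_1$), which I would deduce either from the deck transformation description above or by appealing to the known structure of the automorphism groups of del Pezzo surfaces of degree $1$ and $2$.
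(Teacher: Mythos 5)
Your proof is correct, but it takes a genuinely different route from the paper's -- in fact it is exactly the classical route that the paper acknowledges and deliberately sidesteps: immediately after Lemma~\ref{le:2} the paper says ``this result is classical (cf.\ Remark~\ref{rmk:Bert})'', and Remark~\ref{rmk:Bert}(ii) is precisely your Geiser/Bertini argument. The proof actually given (Proposition~\ref{prop:psi}) is purely topological: the paper writes down the explicit classes $\HL,\HE_i$ in (\ref{eq:Hat}) for $k=7$ and $\TL,\TE_i$ in (\ref{eq:Til}) for $k=8$, verifies by hand that $L\mapsto\HL$, $E_i\mapsto\HE_i$ preserves the intersection form and the pairing with $K$, hence defines an element of $\Aut_K(X_k)$, invokes Wall's theorem that $\pi_0(\Diff_K(X_k))\to\Aut_K(X_k)$ is surjective for $k\le 9$ to realize it by a diffeomorphism, and then checks $\psi_*(\eps_k)=-\eps_k$ via the identity (\ref{eq:eps}). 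The two arguments produce the same lattice isometry: the classes $\HL,\HE_i,\TL,\TE_i$ are exactly the Poincar\'e duals of the images of $a,e_i$ under your reflection $v\mapsto -v+\frac{2(v\cdot c_1)}{c_1\cdot c_1}\,c_1$ (for instance, when $k=7$ one has $c_1\cdot c_1=2$ and $E_i\mapsto -E_i-K=3L-2E_i-\sum_{j\ne i}E_j=\HE_i$), so the only difference is how the isometry gets realized by a diffeomorphism. Your route buys extra structure -- for generic points the map is a biregular involution, of order two -- but at the cost of del Pezzo theory: the $2{:}1$ (bi)anticanonical morphisms and the fact that their deck transformations act as $-\id$ on $K^\perp$. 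That last point, which you rightly flag as the one needing care, does close up cleanly: for the deck transformation $\gamma$ of the double cover $\phi$ one has $\gamma^*D+D=\phi^*\phi_*D\in\Z K$ for all $D$, and combining this with $\gamma^*K=K$ and $K^2=9-k\ne 0$ forces $\gamma^*=-\id$ on $K^\perp$ (this is the characterization quoted in Remark~\ref{rmk:Bert}(ii) from Dolgachev--Iskovskikh). The paper's route, by contrast, needs only lattice arithmetic plus Wall's smooth realization theorem, and has the side benefit of exhibiting the images of the basis classes explicitly, which is what gets used downstream (in diagram (\ref{eq:diag}) and in \S\ref{ss:com}). Your orthogonality computations $\chi_7\cdot c_1=\frac1{12}(18-6-12)=0$ and $\chi_8\cdot c_1=\frac1{30}(45-15-30)=0$ are correct and play the role that (\ref{eq:eps}) plays in the paper.
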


This result is classical (cf. Remark~\ref{rmk:Bert}), but we
 prove it  in \S\ref{ss:1} 
for the sake of completeness.  
 This completes the 
construction of $(M_\ell,\Om)$ as a symplectic manifold.  Here the resolution $X_k$ is for the most part considered as a complex manifold and we use the holomorphic blow down map $\Phi_J:X_k\to Z$.  However, to prove uniqueness we need to understand the symplectic structure of $Z$  much more deeply. In particular the following result is proved in \S\ref{ss:Z}.

\begin{prop}  For any symplectic structure on the orbifold $Z$
the group of symplectomorphisms 
 that act trivially on homology is connected.
 \end{prop}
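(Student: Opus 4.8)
The plan is to reduce the statement to a question about pseudoholomorphic curves on the minimal resolution $X_k$ (with $k=\ell+3\in\{7,8\}$) and then to run a Gromov-type fibration argument. By Proposition~\ref{prop:Zuniq} it suffices to treat a single convenient symplectic form, which I take to be the $S^1$-invariant K\"ahler form arising from the complex structure of \S\ref{s:2}. Write $\bp\subset Z$ for the three singular points, and let $\om_X$ be a symplectic form on $X_k$ agreeing with $\om$ away from the exceptional locus and giving small area to the union $\Ee$ of the Hirzebruch--Jung chains of symplectic spheres contracted by the resolution $\Phi:X_k\to Z$. Recall from \S\ref{ss:wp} that $X_k\less\Ee$ is symplectomorphic to $Z\less\bp$. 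A symplectomorphism of $(Z,\om)$ restricts to one of $Z\less\bp$; near each point of $\bp$ it may be isotoped to an orbifold-linear model by the equivariant Darboux theorem, and since the group of local symplectomorphisms of each cone singularity is connected, it extends to a symplectomorphism of $(X_k,\om_X)$ preserving $\Ee$. As the ambiguity in this extension is connected, it suffices to show that the group $G$ of symplectomorphisms of $(X_k,\om_X)$ that preserve $\Ee$ and act trivially on $H_2(X_k;\Z)$ is connected.

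The essential point is that contracting $\Ee$ collapses most of the homology: $H^2(Z;\Q)$ has rank two, so $Z$ behaves like a rational orbifold of Picard rank two --- the weighted analogue of a one-point blow up of the projective plane (a Hirzebruch surface) --- rather than like the full blow up $X_k$, whose symplectomorphism group involves the Weyl group $W(E_k)$ and is far more complicated. Concretely, I would take the subspace $\Jj_\Ee$ of the (contractible) space $\Jj$ of $\om_X$-compatible almost complex structures consisting of those $J$ for which every sphere of $\Ee$ is $J$-holomorphic. Since each sphere of $\Ee$ has square $-1$ or $-2$ and is the unique $J$-holomorphic representative of its class, positivity of intersections shows that $\Jj_\Ee$ is nonempty, and the usual argument shows it is connected. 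For $J\in\Jj_\Ee$ the images in $Z$ of the $J$-holomorphic curves in the pulled-back fiber class $F$ of the underlying ruling form a pencil of rational orbifold curves foliating $Z$ outside a controlled base locus.

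I would then follow the method of Gromov and its extensions to rational surfaces by Abreu--McDuff and Lalonde--Pinsonnault. The group $G$ acts on $\Jj_\Ee$, and the assignment to each $J$ of its $F$-pencil together with the finitely many distinguished exceptional curves gives, through a sequence of fibrations, an identification of $G$ up to homotopy with the stabiliser of a single such curve configuration. Pinning the configuration and then a section of the pencil reduces $G$ to the group of biholomorphisms of the fixed K\"ahler orbifold acting trivially on homology, namely the identity component of the K\"ahler isometry group, which is a connected compact Lie group. Running the fibrations backwards, connectivity of the base spaces $\Jj_\Ee$ and of the relevant curve moduli yields connectivity of $G$, hence of the subgroup of $\Symp(Z,\om)$ acting trivially on homology.

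The main obstacle is to verify that the moduli spaces of $J$-holomorphic curves entering the fibration behave well --- that $\Jj_\Ee$ is connected, that the $F$-pencil exhibits no unexpected bubbling, and that the evaluation maps are connected fibrations --- uniformly for $J\in\Jj_\Ee$. This is where the del Pezzo (monotone) condition \eqref{eq:tau}, $[\tau]=c_1(X_k)$, is indispensable: it fixes the dimensions of the relevant moduli spaces, forces automatic transversality for the spheres in $\Ee$ and in the class $F$, and rules out the negative multiple covers that would otherwise appear. It is precisely this input that degenerates for $k\ge 9$, which explains why the connectivity argument, and hence the uniqueness in Proposition~\ref{prop:ell}(ii), is confined to the cases $\ell=4,5$.
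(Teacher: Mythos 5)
Your overall strategy---lift to the resolution $X_k$ and argue with $J$-holomorphic curves there---is indeed the paper's strategy, but two of your reductions contain genuine gaps. The first is the opening move: Proposition~\ref{prop:Zuniq} identifies only \emph{cohomologous} forms, so it does not let you work with ``a single convenient symplectic form''. The symplectic forms on $Z$ realize a one-parameter family of cohomology classes (after scaling, the ratio of $\int_{D_2}\om$ to $\int_{D_1}\om$ varies, cf.\ equation (\ref{eq:omZ})), and connectivity must be proved for each class. This is not a formality: the paper itself recalls that the homotopy type of $\Symp(\C P^2\#\ov{\C P}\,\!^2,\om_\la)$ changes with the class, and that for $k\ge 5$ Seidel's Dehn twists make $\Symp_H(X_k,\om')$ disconnected for some forms $\om'$. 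Accordingly the paper splits the proof into a toric case (Lemma~\ref{le:toric}), which is only available for $-\ell<\ka<2(3-\ell)$---note that the anticanonical level $\ka=0$, which is essentially the form you chose, is \emph{not} in this range---and a bridge (Step 3 of Lemma~\ref{le:Zconn}): the fibration $\Symp^c(Z\less\bp,\om_\ka)\to\Diff_0^c(Z\less\bp)\to\Ss\Ff(\ka)$ together with an inflation construction along the unique embedded $J$-holomorphic curve in class $E_k$ (Pinsonnault's result, Lemma~\ref{le:tech}(ii)), which yields a map $\Ss\Ff(\ka)\to\Ss\Ff(\ka')$ into the toric range and surjectivity on $\pi_1$. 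Your proposal has no substitute for this step.

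The second gap is that your key technical input is self-contradictory: a form $\om_X$ on $X_k$ for which the contracted chains $\Cc$ (your $\Ee$) are symplectic can never lie in the monotone class (\ref{eq:tau}), because $c_1(X_k)$ evaluates to $0$ on the $-2$-classes $E_i-E_{i+1}$ and $L-E_{123}$, so monotonicity would force those spheres to have zero area. Hence the ``del Pezzo condition'' cannot supply the transversality and no-bubbling statements your fibration scheme rests on, and the threshold you cite ($k\ge 9$) is not the relevant one---the danger (Lagrangian spheres and Dehn twists) is already present at $k=5$, i.e.\ well below $k=7,8$; what saves the paper is that such twists do not descend to $Z$. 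Relatedly, $Z$ carries no ruling: the fiber class $F$ of your ``pencil'' is never identified, and the Abreu--McDuff stabilizer machinery is not established for this orbifold configuration. Finally, even granting connectivity of your group $G\subset\Symp(X_k,\om_X)$, the resulting isotopies need not push forward to $Z$; the paper takes care that every isotopy it uses is either the identity near $\Cc$ or lies in the local torus actions precisely for this reason. Its actual toric-case argument is quite different from yours: it successively makes the symplectomorphism the identity near $D_1,D_2,D_3$ (using embedded $J$-curves in classes $E_3$, $E_k$, $L-E_{14}$ and symplectic isotopy extension), shrinks the support into a product region by conjugating with rescalings, and concludes with Gromov's theorem there.
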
 

The proof
uses the symplectic version of the resolution.  In Lemmas \ref{le:reg} and \ref{le:sing} we also give proofs of basic uniqueness results for
suitable slices $H^{-1}(a,b)$ of Hamiltonian $S^1$-manifolds.  
These lemmas are well known, but there is no convenient reference in the literature.

\begin{rmk}\labell{rmk:inter}\rm    (i)
We explain in \S\ref{ss:23} a similar construction for the manifolds $M_2=\C P^3$ and $M_3=\Tilde G_{\R}(2,5)$.  Since
the $S^1$ action on $M_2$ extends to a Hamiltonian action of $T^3$,
 the reduced spaces in this case 
 are toric, with moment polytopes given by a 
family of parallel slices of the $3$-simplex
that is illustrated in Figure \ref{fig:12}. \MS
 
 \NI (ii)   $M_4$ and $M_5$ admit Hamiltonian $SO(3)$ actions,
 and it would be interesting to use the methods of 
 River Chang \cite{Chang} to understand them up to 
 $SO(3)$-equivariant symplectomorphism.  More generally, it would be interesting to understand when a Hamiltonian $S^1$ action extends to
 an $SO(3)$ action; can one give conditions on the 
 reduced spaces that would guarantee this?  The toric version of 
 this question is understood. For example, it is shown in McDuff--Tolman \cite{MT2}  that a 
 toric manifold admits a compatible 
 $SO(3)$ action if and only if the moment polytope admits a nontrivial 
 robust affine symmetry; cf. \cite{MT2} Lemma~1.26 and Proposition~5.5.
\end{rmk}

\NI {\bf Acknowledgements.\,}  I am very grateful to Susan Tolman for showing me an early version of her paper \cite{T},  to Weimin Chen and Eduardo Gonzalez for some helpful comments on a previous version of this note, and to the anonymous referee for many small helpful suggestions. 
 Also I owe a debt of gratitude to the many people who helped me with various aspects of algebraic geometry, in particular Alessio Corti,  Ragni Piene, Paul Seidel,  Jason Starr, and Balazs Szendroi.
  Any remaining mistakes are of course the responsibility of the author.

\section{Blow ups of $\C P^2$ and weighted projective spaces.}\labell{s:2}

\subsection{Symplectomorphisms of $X_k$}\labell{ss:1}

In this section we shall prove  Lemma~\ref{le:2} in the 
more precise form given by Proposition~\ref{prop:psi} below.  
We begin with a general discussion of automorphisms of $X_k$.
 One difficulty in making this
 discussion precise is that there are serious gaps in our knowledge of the group $\Diff(X_k)$ of diffeomorphisms of $X_k$.  In particular,
even when $k=0$, i.e. for $X_0 = \C P^2$,
it is not known whether  the subgroup  $\Diff_H(X_k)$
that acts trivially on homology is connected, though the group of symplectomorphisms of $\C P^2$ is connected by Gromov's results.

For all $k$ we shall denote by $J_0$ the complex structure on $X_k$ 
obtained by identifying $X_k$  with the blow up of $\C P^2$ at a particular set of $k$ generic points.  We shall assume
that  $(X_\ell, J_0)$ is a blow up of $(X_k, J_0)$ for all $\ell>k$ and write $K$ for its canonical class.  Thus 
$-K = 3L-\sum_{i=1}^k E_i$.

We shall denote by 
$\Ee(X_k)$ the set of classes in $H_2(X_k)$  that can be 
represented by embedded $-1$ spheres. Thus 
$\Ee(X_k) = \{E\in H_2(X_k): E^2=-1, K\cdot E = -1\}$. 
When $k\le 8$ the elements of $\Ee(X_k)$ can be listed as follows
(modulo permutations of the indices)
\begin{eqnarray}\labell{eq:Ek}
&&E_1,\;\;L-E_{12};\;\; 2L-E_{1\dots5}; \;\;3L-2E_1-E_{2\dots 7};\\\notag
&&4L - 2E_{123}-E_{4\dots 8};\;\;
5L-2E_{1\dots 6}-E_{78};\;\;
6L-3E_1-2E_{2\dots 8}.
\end{eqnarray}
(Here we denote $\sum_{i=j}^n E_i =:E_{j\dots n}$. Further, elements of the last three kinds do not appear in $\Ee(X_7)$ since they involve $8$ different $E_i$.)

Next, recall that the classical {\it Cremona transformation} 
 $R_{123}:X_3\to X_3$ is the biholomorphism that 
 covers the birational map
$$
\rho:\C P^2\less \{3\mbox{ pts}\} \;\longrightarrow\;
\C P^2\less \{3\mbox{ pts}\},\qquad   [x:y:z]\mapsto [yz:xz:xy].
$$
Thus $R_{123}$ acts on $H_2(X_3)$ by
$$
L\mapsto 2L-E_{123},\qquad E_i\mapsto L-E_j - E_k,
$$
where $\{i,j,k\}=\{ 1,2,3\}$.
By Seidel \cite{Sei}, $R_{123}$  is isotopic to
 a symplectomorphism of $X_3$, when this has 
 a $J_0$-tame symplectic form in the class 
 Poincar\'e dual to  $-K= 3L- E_{123}$. Indeed,  in this case $R_{123}$
 is isotopic to the Dehn twist 
 in a Lagrangian sphere in class 
 $L-E_{123}$.  
 
 We denote the Cremona transformation of $X_k$  in the exceptional divisors $E_i, E_j, E_\ell$ by $R_{ij\ell}$.  It is well defined up to isotopy, and acts on
$H_2(X_k)$ by the reflection
$A\mapsto A+(A\cdot B) B$ where $B : = L-E_{ij\ell}$.

Denote by
$\Aut_K(X_k)$ the group of automorphisms of the homology group $H_2(X_k;\Z)$ that preserve the canonical class $K$ 
and the intersection form.  Further, denote by 
$\Diff_K(X_k)$ the group of diffeomorphisms of $X_k$ that preserve
$K$.  A classical result of Wall \cite{W} asserts that the natural map 
$\pi_0(\Diff_K(X_k)) \to \Aut_K(X_k)$ is surjective when $k\le 9$.
Moreover, its image is generated by permutations of the $E_i$ and the Cremona transformations $R_{ij\ell}$.\footnote
{
When $k\le 8$ this is easy to verify directly since $\Ee(X_k)$ 
is finite with elements as listed in (\ref{eq:Ek}).  For example, the following composite takes $E_1$ to $\Hat E_1: = 3L-2E_1-E_{234567}$:
$$
E_1\;\;\stackrel{R_{123}}\longrightarrow \;\;L-E_{23}
\;\;\stackrel{R_{145}}\longrightarrow \;\;2L- E_{12345}
\;\;\stackrel{R_{167}}\longrightarrow \;\;3L-2E_1-E_{234567}.
$$}
   
 Consider the following elements of $H_2(X_7;\Z)$:
\begin{eqnarray}\labell{eq:Hat}
\eps_7: &=& \ts{\frac 1{12}}\bigl(6L-2E_{123} - 3E_{4567}\bigr),\\\notag
\HL: &=& 8L-3E_{1\dots7},\\\notag
\HE_i: &=& 3L-2E_i -\sum_{j \ne i} E_j,\quad i=1,\dots,7;
\end{eqnarray}
and of $H_2(X_8;\Z)$:
\begin{eqnarray}\labell{eq:Til}
\eps_8: &=& \ts{\frac 1{30}}\bigl(15L-5E_{123} - 6E_{45678}\bigr),\\\notag
\TL: &=& 17L-6E_{1\dots 8}\\\notag
\TE_i: &=& 6L-3E_i -2\sum_{j \ne i} E_j \quad i=1,\dots,8.
\end{eqnarray}

\begin{prop}\labell{prop:psi} For $k=7,8$,  there is a diffeomorphism $\psi:X_k\to X_k$ in $\Diff_K(X_k)$ that takes the classes
$L, E_i$ to $\HL, \HE_i$ when $k=7$ and to $\TL, \TE_i$ when $k=8$.
 Moreover $\psi_*(\eps_k) = -\eps_k$. 
\end{prop}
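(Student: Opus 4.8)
The plan is to define the desired map first at the level of homology, recognize it as a $K$-preserving isometry, promote it to a diffeomorphism by Wall's theorem, and finally read off its effect on $\eps_k$. Accordingly, define the linear automorphism $\phi$ of $H_2(X_k;\Z)$ by setting $\phi(L)=\HL$, $\phi(E_i)=\HE_i$ when $k=7$, and $\phi(L)=\TL$, $\phi(E_i)=\TE_i$ when $k=8$, using the classes in (\ref{eq:Hat}) and (\ref{eq:Til}). This $\phi$ is nothing other than the homological action of the Geiser involution (for $k=7$) and the Bertini involution (for $k=8$) on the del Pezzo surface $X_k$; cf. Remark~\ref{rmk:Bert}.

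First I would check that $\phi\in\Aut_K(X_k)$. This reduces to two short intersection computations. That $\phi$ preserves the form follows from identities such as $\HL^2=64-63=1$, $\HE_i^2=9-4-6=-1$, $\HL\cdot\HE_i=24-6-18=0$ and $\HE_i\cdot\HE_j=0$ for $i\ne j$, together with the analogous identities for $\TL,\TE_i$. That $\phi$ fixes the canonical class follows from $3\HL-\sum_i\HE_i=3L-\sum_iE_i=-K$, and similarly $3\TL-\sum_i\TE_i=-K$; one also finds $\phi^2=\id$, confirming that $\phi$ is the involution advertised above. Since $k=7,8\le 9$, Wall's theorem \cite{W} asserts that $\pi_0(\Diff_K(X_k))\to\Aut_K(X_k)$ is surjective, so there is a diffeomorphism $\psi\in\Diff_K(X_k)$ with $\psi_*=\phi$; this is the map required by the first sentence of the Proposition. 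If one wishes to be explicit, $\phi$ can be written as a word in the permutations of the $E_i$ and the Cremona transformations $R_{ij\ell}$; indeed the footnote to Wall's theorem already exhibits such a composite realizing $E_1\mapsto\HE_1$ (there denoted $\Hat E_1$).

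It then remains to verify $\psi_*(\eps_k)=-\eps_k$. Since $\psi_*=\phi$ is $\Q$-linear, I would argue conceptually: the vector $\eps_k$ lies in $K^\perp$, because $(-K)\cdot\eps_7=\ts\frac1{12}(18-6-12)=0$ and $(-K)\cdot\eps_8=\ts\frac1{30}(45-15-30)=0$; and $\phi$ acts as $-\id$ on $K^\perp$. For the last point, note that $\phi$ fixes $K$ and preserves the form, while $K^2=9-k>0$ for $k\le 8$, so $H_2(X_k;\Q)=\langle K\rangle\oplus K^\perp$ and $\phi$ preserves $K^\perp$; a direct check on the spanning set of simple roots then gives $\phi(E_i-E_j)=-(E_i-E_j)$ and $\phi(L-E_i-E_j-E_m)=-(L-E_i-E_j-E_m)$. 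Hence $\phi=-\id$ on $K^\perp$, so $\phi(\eps_k)=-\eps_k$. Alternatively one may simply expand $\phi(\eps_k)$ against (\ref{eq:Hat}) or (\ref{eq:Til}) and collect coefficients.

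The main obstacle is not analytic but bookkeeping: once Wall's theorem is granted, the entire content lies in the two verifications, namely that $\phi$ is a $K$-preserving isometry and that it reverses $\eps_k$. The single point that genuinely uses the hypotheses is that $\phi$ must lie in the image of $\pi_0(\Diff_K(X_k))$, which is available precisely because $k\le 9$; were one to seek a proof independent of Wall, the work would move entirely into the combinatorics of realizing $\phi$ as an explicit product of Cremona transformations and permutations, which is the classical heart of the statement.
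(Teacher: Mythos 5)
Your proposal is correct and, in its main structure, identical to the paper's proof: both invoke Wall's theorem to reduce the problem to exhibiting an element of $\Aut_K(X_k)$ sending $L,E_i$ to $\HL,\HE_i$ (resp.\ $\TL,\TE_i$), and both establish this by checking the intersection identities $\HL^2=1$, $\HE_i\cdot\HE_j=-\de_{ij}$, $\HL\cdot\HE_i=0$ together with the preservation of $K$. The one place you genuinely diverge is the final claim: the paper verifies $\psi_*(\eps_k)=-\eps_k$ by the direct identity (\ref{eq:eps}), namely $-\eps_7=\ts{\frac1{12}}\bigl(6\HL-2\HE_{123}-3\HE_{4567}\bigr)$ and its $k=8$ analogue, whereas you observe that $\eps_k\in K^\perp$ and that your isometry acts as $-\id$ on $K^\perp$, checked on the simple roots $E_i-E_{i+1}$ and $L-E_{123}$, which do span $K^\perp$ over $\Q$ since $K^2=9-k\ne 0$. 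Both verifications are correct and of comparable length; yours has the merit of exposing the structural reason for the sign reversal --- it is precisely the property singled out in Remark~\ref{rmk:Bert}(ii), that $A+\psi_*(A)$ is always a multiple of $K$ --- and it shows that every class in $K^\perp$, not just $\eps_k$, is reversed.
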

\begin{proof}   
By the  results of Wall  mentioned above, it suffices to prove that there is 
an element of $\Aut_K(X_k)$  with this action.  But $H_2(X_k)$ is generated by the classes $L,E_i$ with relations 
$$
L^2=1=-E_i^2,\quad
L\cdot E_i = E_i\cdot E_j=0\;\;\mbox{ if } i\ne j.
$$
Further $K$ is determined by the identities $K\cdot L = -3, K\cdot E_i = -1$.
Therefore to prove the first statement in the case $k=7$, one simply needs to check that the following identities hold for all $1\le i,j\le 7$:
$$
\Hat L^2 = 1,\;\;  \Hat E_i\cdot \Hat E_j = -\de_{ij}, \;\;\Hat L\cdot\Hat E_i = 0,\;\;
K\cdot\Hat E_i = -1,\;\; K\cdot\Hat L = -3.
$$ 
A similar argument works when $k=8$.

The last statement holds because
\begin{equation}\labell{eq:eps}
-\eps_7= \ts{\frac 1{12}}\bigl(6\HL-2\HE_{123} - 3\HE_{4567}\bigr),\quad
-\eps_8= \ts{\frac 1{30}}\bigl(15\TL-5\TE_{123} - 6\TE_{45678}\bigr).
\end{equation}
This completes the proof.
\end{proof}

\begin{rmk}\labell{rmk:Bert}\rm  {\rm (i)}
As we shall see in \S\ref{ss:com}, there are other possibilities for $\psi$.
However, they all involve classes of the type  
 $\HE_i$ and $\TE_j$.  As is shown in
  the proof of Proposition 1.5 in \cite{M},
  these are precisely the 
 classes that give the obstructions to embedding
$\la E(1,\ell)$ into $E(2,3)$ for large $\la$.
Hence their size must decrease to $0$ as one approaches the critical value $\ka=\ell$ from below, so that they are natural candidates
for the classes of the exceptional divisors created 
as $\ka$ decreases through $\ell$. 
\SSS

\NI
{\rm (ii)}  
For sufficiently generic complex structures on $X_k$  one can choose the map $\psi$ to be a biholomorphic involution.  When $k=7$ one gets 
the family of  Geiser involutions, while when $k=8$ one gets the Bertini involutions.  They may be recognized by the fact that
in each case the sum $A + \psi_*(A)$ for $A\in H_2(X_k)$ is always a multiple of the canonical class $K=-3L+\sum E_i$;
cf.  Dolgachev--Iskovskikh \cite{Dolg}.   No doubt one could use this fact to construct complex structures on $M_\ell$.  But
because we are interested in the singular complex structures on $X_k$ that are pulled back from $Z$, one would need to look at the moduli spaces of these involutions quite carefully.  In \S\ref{ss:com} we shall take a somewhat different approach.
\end{rmk}

\subsection{Resolving weighted projective spaces.}\labell{ss:wp}

We first describe the  reduced manifolds $(Z,\om)$.
Since these are weighted blow ups of weighted projective spaces, we shall begin with some background information on these spaces.
For further details, see Godinho \cite{God}.

Let $\un m:= (m_1,\dots,m_N)$ where the $m_i$ are positive integers.
Denote $a_i: = \prod_{j\ne i}m_j$ and $A: = \prod m_i$, so that
$a_im_i = A$ for all $i$.
By definition, the weighted projective space 
 $
W: =  \C P^{N-1}_{\un m} $
  is the complex orbifold obtained 
   by quotienting  $\C^N\less\{0\}$ 
  by the group $\C^*$ acting via
 $$
\la\cdot  (z_1,\dots,z_N) = (\la^{m_1}z_1,\dots,\la^{m_N}z_N).
$$
We shall normalize the symplectic form $\om_0$ on $\C^N$ so that the Hamiltonian function for the induced Hamiltonian action 
of $S^1\subset \C^*$ on $(\C^N,\om_0)$ 
is 
$$
H_{\un m}: = \sum m_i|z_i|^2 = A(\sum \frac{|z_i|^2}{a_i}).
$$
Then    $\C P^{N-1}_{\un m}$ may also be considered as one of the reduced spaces of this action and given the corresponding symplectic form
$\tau_{\un m}$.  To keep our coefficients integral, 
we shall identify it with the reduced space at  level $A$.  Thus 
$$
(\C P^{N-1}_{\un m}, \tau_{\un m}) = H^{-1}(A)/S^1
$$
 is the  quotient of
   the boundary of the ellipsoid
 $$
 E(\un a): = \left\{z\,|\,\sum \frac{|z_i|^2}{a_i}\le 1\right\}\subset \C^N
  $$ 
 by the characteristic flow.   
  Note that, for any $c>0$, the rescaled space
 $
 (\C P^{N-1}_{\un m}, c\tau_{\un m})$ is the similar quotient 
 of the boundary $H^{-1}(cA)$ of 
\begin{equation}\labell{eq:wp}
 c\,E(\un a): = \left\{z\,|\,\sum \frac{|z_i|^2}{a_i}\le c\right\}.
\end{equation}

By construction, the weighted projective space $W$ is a toric manifold whose
 moment polytope $\De_W$ can be identified with the intersection of the hyperplane
 $\sum \frac{x_i}{a_i} = 1$ with the positive quadrant $\{x_i\ge 0\}$ in $\R^N.$
  If $m_1=1$ then the vertex $(1,0,\dots,0)$ of $\De_W$ is smooth,
 and there is an integral affine transformation of $\R^N$ that takes this vertex to $0$ and takes
 $\De_W$  to the polytope
 $$
 \De_{a_2,\dots,a_N}: = \{\un x\in \R^{N-1}\,|\,
 x_2,\dots,x_N\ge 0, \; \sum_{i>1}\frac {x_i}{a_i}\le 1\}.
 $$
 Therefore, in this case we can think of $W$ as the compactification of the interior of the ellipsoid $E(a_2,\dots,a_N)$ that is obtained by adding the quotient of the boundary in which
  each orbit of  the characteristic flow is collapsed to a point.

\begin{example}\labell{ex:ex}\rm  Let us
specialize to the case $N=3$.  If $\un m = (1,p,q)$, then
$a_1=A=pq, a_2=q$ and $a_3=p$.  Therefore
 the moment polytope  $\De_W$ of $W: = \bigl(\C P^2_{1,p,q}, \tau_{1,p,q}\bigr)$ is the triangle $T_{q,p}$
in $\R^2$ with vertices $(0,0), (q,0)$ and $(0,p)$; see Figure \ref{fig:12} and \cite{M}.\footnote
{
For a general treatment of toric symplectic orbifolds see Lerman and Tolman \cite{LT}.}   
(The fact that the weights $q,p$ of the ellipsoid  coincide  modulo order with 
the initial weights $m_i, i>1,$ is an accident that happens in this dimension only.) 

As always, this moment polytope determines the 
symplectic form $\tau_{p,q}$: indeed,
for every edge $\ve$ of the moment polytope $\De_W$,
the integral of $\tau_{p,q}$ over $\ve$ equals the {\it affine length} of 
$\ve$.  This can be measured as follows.  Take any  affine transformation $A$ of $\R^2$
that preserves the integer lattice and is such that $A(\ve)$ lies along the $x$-axis, and then measure the Euclidean length of $A(\ve)$. Thus if $\ve$ has rational slope and  endpoints on the integer lattice, $\al(\ve) =k+1$ where $k$ is the number of  points 
of the integer lattice in the interior of $\ve$.  In particular, if $p,q$ are mutually prime,
 \begin{equation}\labell{eq:int}
\int_{\C P^1_{p,q}}\;\tau_{p,q}=1.
 \end{equation}
\end{example}

 The following lemma is due to Tolman \cite{T}. We explain its proof for the convenience of the reader. Note that she uses the form $\om_{1,m,n}: = \frac 1{mn}\tau_{1,m,n}$ on $\C P^2_{1,m,n}$.
  
 \begin{lemma}[Tolman]\labell{le:T} Suppose that the manifold $M_\ell$ exists for some integer $\ell\in [2,5]$. 
Then the reduced space  $(Z,\om_\ka)$ at level $\ka\in (-\ell,\ell)$ 
 is diffeomorphic to the connected sum 
 $\C P^2_{1,2,3}\#\ov{\C P}\,\!^2_{1,1,\ell}$ of the weighted projective space
 $\C P^2_{1,2,3}$ with a conjugate $\ocp^2_{1,1,\ell}$. 
 Moreover the symplectic form $\om_\ka$ lies
in the unique class $[\om_\ka]$ such that
 \begin{equation}\labell{eq:omZ}
 [\om_\ka]|_{\C P^1_{2,3}} ={\ts \frac{6+\ka}6}\,\tau_{2,3}, \quad 
 [\om_\ka]|_{{\C P}\,\!^1_{1,\ell}} = {\ts \frac{\ell+\ka}{\ell}}\,\tau_{1,\ell}.
 \end{equation}

 \end{lemma}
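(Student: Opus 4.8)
The plan is to reconstruct $(Z,\om_\ka)$ by following the Morse theory of the moment map $H$ as $\ka$ increases from the minimum value $-6$, combining the equivariant normal form at each fixed point, the Duistermaat--Heckman theorem, and Godinho's analysis \cite{God} of how a reduced space changes as $\ka$ crosses a critical level. Since $M_\ell$ is assumed to exist and the critical values of $H$ are $\pm 6,\pm\ell$, the critical values below $0$ are $-6$ (at $x_4$) and $-\ell$ (at $x_3$), and there are no critical values inside $(-\ell,\ell)$. Hence it suffices to analyse the reduced space just above the minimum and then the effect of crossing $x_3$.

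First I would identify the reduced space on the interval $(-6,-\ell)$. By the equivariant Darboux theorem, near $x_4$ the manifold is modelled on $(\C^3,\om_0)$ with weights $(1,2,3)$ and $H = -6 + \sum_i m_i|z_i|^2$; comparing with the description of weighted projective space in \S\ref{ss:wp} and Example~\ref{ex:ex}, the reduced space for $\ka$ slightly above $-6$ is $(\C P^2_{1,2,3}, \ts\frac{\ka+6}{6}\tau_{1,2,3})$, the scaling constant coming from (\ref{eq:wp}) at level $\ka + 6 = \ts\frac{\ka+6}{6}A$ with $A = 6$. In particular $\int_{\C P^1_{2,3}}\om_\ka = \ts\frac{6+\ka}{6}$, using $\int_{\C P^1_{2,3}}\tau_{2,3} = 1$ from (\ref{eq:int}). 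By Duistermaat--Heckman this identification, and the affine-linear formula for $[\om_\ka]$, persist throughout $(-6,-\ell)$.

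Next I would cross the critical level $-\ell$. Since $x_3$ has index $2$ it carries a single negative weight $-1$ and positive weights $1,\ell$; by Godinho's result this crossing removes the point $\pi(x_3)$ (which is smooth, as the negative weight is $-1$) and glues in the weighted projective line $\C P^1_{1,\ell}$ determined by the positive weights, i.e. it performs a weighted blow up at a smooth point. Diffeomorphically this is the connected sum $\C P^2_{1,2,3}\#\ocp^2_{1,1,\ell}$; one checks that $\pi(x_3)$ is the smooth vertex of $\C P^2_{1,2,3}$, so the resulting orbifold carries exactly the three singular points of orders $2,3,\ell$ noted in the introduction. The local model $H = -\ell + (|z_1|^2 + \ell|z_2|^2 - |z_3|^2)$ shows that the exceptional divisor $\{z_3 = 0\}$ reduces to $(\C P^1_{1,\ell}, \ts\frac{\ka+\ell}{\ell}\tau_{1,\ell})$, whence $\int_{\C P^1_{1,\ell}}\om_\ka = \ts\frac{\ell+\ka}{\ell}$ just above $-\ell$, and hence, again by Duistermaat--Heckman, for all $\ka\in(-\ell,\ell)$. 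Since the blow up point is disjoint from the opposite edge $\C P^1_{2,3}$, the area of $\C P^1_{2,3}$ varies continuously and linearly across $-\ell$, so $\int_{\C P^1_{2,3}}\om_\ka = \ts\frac{6+\ka}{6}$ as well on all of $(-\ell,\ell)$. This gives (\ref{eq:omZ}). Finally $H^2(Z;\Q)\cong\Q^2$, and the classes of $\C P^1_{2,3}$ and $\C P^1_{1,\ell}$ (a nonzero multiple of the line class and of the exceptional class) form a basis of $H_2(Z;\Q)$, so the two restrictions in (\ref{eq:omZ}) determine $[\om_\ka]$ uniquely.

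The main obstacle will be making the orbifold surgery at $x_3$ precise: one must invoke Godinho's weighted blow up description in the orbifold category and verify that the reduced point $\pi(x_3)$ is a smooth point of $\C P^2_{1,2,3}$ lying off $\C P^1_{2,3}$, so that the Duistermaat--Heckman linearity of $\int_{\C P^1_{2,3}}\om_\ka$ survives the crossing unchanged. A secondary nuisance is keeping the affine-length normalization $\int\tau = 1$ consistent between the two local models, so that the scaling constants $\ts\frac{\ka+6}{6}$ and $\ts\frac{\ka+\ell}{\ell}$ emerge correctly.
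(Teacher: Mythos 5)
Your proposal is correct and follows essentially the same route as the paper: identify the reduced space just above the minimum as $(\C P^2_{1,2,3},\ts\frac{\ka+6}6\tau_{1,2,3})$ via the local normal form and equation (\ref{eq:wp}), invoke Godinho's result that crossing the index-$2$ critical point with weights $(-1,1,\ell)$ effects an orbifold blow up with weights $(1,\ell)$, and read off the coefficient $\ts\frac{\ell+\ka}{\ell}$ on the exceptional divisor $\C P^1_{1,\ell}$ from the local model, with Duistermaat--Heckman linearity giving the formulas on the whole interval. Your added remarks on the smoothness of $\pi(x_3)$ and the uniqueness of the class via $H^2(Z;\Q)\cong\Q^2$ are consistent with what the paper establishes (the latter in Lemma~\ref{le:H2}).
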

 \begin{proof} It follows from equation (\ref{eq:wp}) that
 the reduced space for the Hamiltonian
 $H: = \sum_{i=1}^3 	m_i |z_i|^2$ at level $\eps>0$ is
  $
 \Bigl(\C P^{2}_{\un m},\frac{\eps}{A} \,\tau_{\un m}\Bigr).
 $
Thus, if $\un m = (1,2,3)$ the reduced space is
$$
 \Bigl(\C P^2_{1,2,3}, \ts{\frac \eps 6} \,\tau_{1,2,3}\Bigr).
 $$
Since the minimal critical level is at $\ka = -6$ rather than $0$,
 the coefficient of $\tau_{2,3}$
  in equation (\ref{eq:omZ}) is therefore $\frac{6+\ka}6$.
 
 To understand the diffeomorphism type of the reduced space
 at level $\ka\in (-\ell,\ell)$, first recall
 from  Example~\ref{ex:ex} that
 when  $\un m= (1,m_2,m_3) = :(1,\un m')$, one can also obtain 
 $(\C P^{2}_{\un m}, \tau_{\un m})$
 from the ellipsoid $E: =E(\un m')\subset \C ^{2}$
 by collapsing its boundary $\p E$ to $\C P^{1}_{\un m'}$ as above. It follows that
the connected sum 
 $X\#\ov{\C P}\,\!^{2}_{1,\un m'}$ can be considered as a  orbifold blow up, in which one cuts out an embedded ellipsoid 
 $\eps E(\un m')\subset X$  for some small $\eps>0$ and then collapses the boundary along the characteristic flow.   This is called the (symplectic) orbifold blow up with weights $\un m'$.  Using toric models one can show 
 that as one passes a critical point with isotropy
weights
$(-1, m_2,m_3)$ (where $m_i>0$) the critical level undergoes an
orbifold blow up with weights $\un m'$.  This is illustrated in
Figure \ref{fig:7} below, and
a detailed proof is given by Godinho~\cite{God}.

 For example, the reduced space at level $\eps>0$ of the function 
 $H = -|z_1|^2 + m_2|z_2|^2 + m_3|z_3|^2$
has as exceptional divisor the quotient of the level set
$$
H(z_2,z_3) = m_2m_3\Bigl(\frac{|z_2|^2}{m_3} + \frac{|z_3|^2}{m_2}\Bigr)=\eps,
$$
which is $\bigl(\C P^1_{m_3,m_2},\frac\eps{m_2m_3} \tau_{m_3,m_2}\bigr)$.
In particular, when $(m_2,m_3) = (1,\ell)$ and the critical point occurs at level $-\ell$, one obtains the coefficient
$(\ell+\eps)/ \ell$ of (\ref{eq:omZ}). 
\end{proof}

\begin{rmk}\labell{rmk:toric}\rm  When $\ka+\ell>0$ is sufficiently small the weighted blow up can be done equivariantly
so that $(Z,\om_\ka)$ has a global toric structure as in Figure \ref{fig:2}.  We shall denote by $J_T$ the corresponding complex structure on $Z$.
\end{rmk}

Observe that $Z$ has three singular points $p_m, m=2,3,\ell$, each with a neighborhood $\Nn_m$ of the form $\Tilde\Nn_m/\Z_m$, where $\Tilde\Nn_m: = B\subset \C^2$ is a (closed) ball with suitable small radius and the generator of $\Z_m: = \Z/m\Z$ acts via
$(z_1,z_2)\mapsto (e^{2\pi i/m}z_1,e^{-2\pi i/m}z_2)$.\footnote
{
These are known in the literature as simple singularities of type $A_{m-1}$: see for example Ohta--Ono \cite{OO}. They may be resolved by chains of $-2$spheres of length $m-1$; cf. Lemma~\ref{le:resol}.}
For example, $\Nn_3$ is a neighborhood of $[0:0:1]$ in 
$\C P^2_{1,2,3}$ and $\la \in \Z_3$ acts by
$$
[z_0:z_1:1]\;\;\mapsto \;\;  [\la z_0:\la^2z_1:\la^3] = [\la z_0: \la^{-1}z_1:1].
$$  
 We shall denote $\bp: = \{p_2,p_3,p_\ell\}$ and
  $\Nn: = \cup_m \Nn_m$.
   By the equivariant Darboux theorem we may (and will) suppose that any symplectic form $\om$ on $Z$ lifts to the standard form $\Tilde\om_0: = \sum_j dx_j\wedge dy_j$ 
  on the local uniformizers $\Tilde\Nn_m$, where $z_j: = x_j + iy_j$.
  
  Although $Z$ can be given an orbifold structure, it is better to think of it as a manifold with singular points.  Since  the order of these singularities are different, any diffeomorphism of $Z$ must fix each $p_m$.  Then the condition for $\phi:Z\to Z$ to be a diffeomorphism is that
  its restriction to the manifold $Z\less\bp$ is smooth and  that for each $m$ there is an open, $\Z_m$-invariant neighborhood $\Tilde U_m$ of $0$ in $\Tilde \Nn_m$ and a diffeomorphism $\Tilde\phi_m: (\Tilde U_m, \Tilde\om_0)\to (\Tilde \Nn_m,\Tilde\om_0)$ that takes $\Z_m$-orbits to  $\Z_m$-orbits; i.e. the following diagram commutes
 \begin{equation}\labell{eq:locphi}
  \begin{array}{ccc}
  \Tilde U_m&\stackrel{\Tilde\phi_m}\to& \Tilde \Nn_m\\
  \downarrow&&\downarrow\\
  U_m&\stackrel{\phi}\to&\Nn_m.
\end{array}
\end{equation}
  Standard arguments show that any  
 diffeomorphism can be isotoped to one
  that is linear with  respect to these local coordinates near $\bp$.   Hence we shall assume that the $\Tilde\phi_m$ are linear.  It is then clear that
  for each $m$ there is an automorphism $\al_m:\Z_m\to \Z_m$ such that
 \begin{equation}\labell{eq:locact} \Tilde\phi_m\circ \ga = \al_m(\ga)\circ\Tilde\phi_m,\quad \ga\in \Z_m.
 \end{equation}
Similarly,  a diffeomorphism $\phi:(Z,\om)\to (Z,\om')$
  is called a {\it symplectomorphism}  if its restriction to the manifold  
  $(Z\less \bp, \om)$ is a  symplectomorphism, and if the local lifts 
  $\Tilde\phi_m$ preserve $\om_0$. 
  
  Note finally that because we are thinking of $Z$ as a singular space, rather than as an orbifold, we define its homology and cohomology groups to be those of the underlying topological space.

 \begin{lemma}\labell{le:phiZ} {\rm (i)} Every diffeomorphism 
  $(Z,\om)\to (Z,\om')$ is isotopic to a 
 diffeomorphism  $\phi$ such that each local linear model $\Tilde\phi_m$
 is  either the identity map or, when $m=3,\ell$, has
  the form $(z_1,z_2)\mapsto (z_2,z_1)$.\SSS
  
  \NI {\rm (ii)} Denote by $e(Y)\in H^2(Z\less\bp;\Z)$ the Euler class of an
   $S^1$-bundle $Y|_{Z\less \bp}\to Z$.  If $\psi^*(e(Y))=-e(Y)$,
  then $\phi$ has the local model $(z_1,z_2)\mapsto (z_2,z_1)$ for $m=3,\ell$, but if $\phi^*(e(Y))=e(Y)$ then $\phi$ is locally modelled by the identity map.\SSS
  
  \NI {\rm (iii)}  The above statements hold also for symplectomorphisms.
 \end{lemma}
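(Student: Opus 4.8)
The plan is to pin down, for each $m$, the finitely many isotopy classes of admissible linear local models $\Tilde\phi_m$, to prove that each class is connected and contains the stated standard representative, and then to read off the action on $e(Y)$ from these models. As recalled above, I may first isotope any diffeomorphism (resp.\ symplectomorphism) so that each $\Tilde\phi_m$ is linear and satisfies (\ref{eq:locact}) for some automorphism $\al_m$ of $\Z_m$, necessarily of the form $\ga\mapsto\ga^s$ with $s\in(\Z/m\Z)^\times$. Writing $\ga(z_1,z_2)=(\zeta z_1,\zeta^{-1}z_2)$ with $\zeta=e^{2\pi i/m}$, the relation (\ref{eq:locact}) says exactly that the real-linear isomorphism $\Tilde\phi_m$ intertwines the $\Z_m$-representation of weights $(1,-1)$ with that of weights $(s,-s)$. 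As real representations these are $\R_1^{\oplus2}$ and $\R_s^{\oplus2}$, where $\R_k$ is the real $2$-plane on which $\ga$ rotates through $2\pi k/m$; since $\R_k\cong\R_{k'}$ iff $k\equiv\pm k'\pmod m$, such an intertwiner exists iff $s\equiv\pm1\pmod m$. This is vacuous for $m=2,3,4$ but is what rules out the extra automorphisms $s=2,3$ when $m=\ell=5$. A direct check shows the identity realizes $s=1$ and the swap $\si(z_1,z_2)=(z_2,z_1)$ realizes $s=-1$.

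Next, for (i) and (iii), I would establish connectivity of the admissible models. For $m\ge3$ and $s=1$ the relation (\ref{eq:locact}) forces $\Tilde\phi_m(z_1,z_2)=(Az_1+B\bar z_2,\;C\bar z_1+Dz_2)$; conjugating the second coordinate by $\Psi(z_1,z_2)=(z_1,\bar z_2)$ identifies this space with $GL(2,\C)$ in the diffeomorphism case and, on imposing $\Tilde\phi_m^*\Tom_0=\Tom_0$, with the indefinite unitary group $U(1,1)$ in the symplectic case. Both groups are connected and contain the identity, and the $s=-1$ models form the connected coset $\si\cdot\{s=1\}$ containing $\si$. Moreover these models are orientation preserving: the conjugation $\Psi$ is applied on both sides, so it does not change the sign of the real determinant, and $\si$ preserves orientation (symplectic models preserve orientation automatically). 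Since $Z\less\bp$ is connected, $\phi$ has a well-defined orientation behaviour, which by the previous sentence is orientation preserving; hence at $p_2$, where (\ref{eq:locact}) imposes no condition because $\ga=-\id$, the model $\Tilde\phi_2$ lies in the connected group $GL^+(4,\R)$ (resp.\ $Sp(4,\R)$) and is isotopic to the identity as well. Standard arguments then promote these connected paths of linear germs to isotopies of $\phi$ through diffeomorphisms (resp.\ symplectomorphisms) of $Z$ supported near $\bp$, giving the normal forms in (i) and, by the identical argument, in (iii).

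Finally, for (ii), I would compute the action on the Euler class through the links. Let $S_m:=\p\Nn_m=S^3/\Z_m$, a lens space with $H^2(S_m;\Z)=\Z/m\Z$, and restrict $e(Y)\in H^2(Z\less\bp;\Z)$ to each $S_m$. Because $\phi$ fixes $p_m$ and preserves $\Nn_m$, it restricts to a self-map of $S_m$ acting on $H^2(S_m;\Z)$ by $+1$ when $\Tilde\phi_m$ is the identity and by $-1$ when it is $\si$ (the swap sends the generator $\ga$ of $\pi_1(S_m)$ to $\ga^{-1}$). Writing $\phi^*e(Y)=\eps\,e(Y)$ with $\eps=\pm1$ and restricting to $S_m$ gives $(\de_m-\eps)\,e(Y)|_{S_m}=0$ in $\Z/m\Z$, where $\de_m=\pm1$ records identity versus swap. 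The key point is that $e(Y)|_{S_m}$ generates $\Z/m\Z$: since the total space $Y^\pm=H^{-1}(0)$ is a \emph{smooth} manifold Seifert fibred over the orbifold $Z$, the local Seifert invariant at each $p_m$ must be coprime to $m$. For $m=3,\ell$ we then have $+1\not\equiv-1$, so $\de_m=\eps$, forcing the swap when $\eps=-1$ and the identity when $\eps=+1$; as the model at $p_2$ is always the identity, this yields precisely the stated dichotomy.

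The main obstacle is the representation-theoretic step excluding $s\ne\pm1$ (needed only for $m=\ell=5$) together with the verification that $e(Y)|_{S_m}$ is a unit mod $m$; the latter is exactly where the smoothness of the level set $Y^\pm=H^{-1}(0)$ enters, and it is what makes the sign comparison in (ii) conclusive. The connectivity computations for $GL(2,\C)$, $U(1,1)$, $Sp(4,\R)$ and the extension of germ-isotopies to global isotopies are routine.
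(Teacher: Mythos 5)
Your proposal is correct and follows essentially the same route as the paper's (much terser) proof: linearize near $\bp$, observe that the only equivariant automorphisms of $\Z_m$ are $\ga\mapsto\ga^{\pm1}$ (realized by the identity and the swap), and deduce (ii) from the fact that the sign of the induced automorphism on $\Z_m$ determines the action on $e(Y)$. What you add — the representation-theoretic exclusion of $s\ne\pm1$ when $\ell=5$, the connectivity of the intertwiner spaces $GL(2,\C)$/$U(1,1)$, and the lens-space restriction argument using that smoothness of $Y$ forces $e(Y)|_{S_m}$ to be a unit mod $m$ — is exactly the content the paper compresses into the phrases ``equivariant automorphism'' and ``only if $\phi$ induces the nontrivial automorphism,'' so it is a faithful filling-in of the same argument rather than a different one.
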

 \begin{proof}
 (i) follows from the above discussion  because  there is only one nontrivial equivariant automorphism $\al_m$ of $\Z_m$ when $m=3,\ell$, namely $\ga\mapsto \ga^{-1}$, while there are none for $m=2$.  (ii) holds because 
$\phi^*(e(Y))=-e(Y)$ only if $\phi$ induces the nontrivial automorphism on $\Z_m$ for $m=3,\ell$, while $\phi^*(e(Y))=e(Y)$ only if 
the induced automorphisms on $\Z_m$ are trivial. The proof of (iii) is similar.
\end{proof}

To go further, we need to consider the relation between $Z$ and its resolution $X_k$, where $k=\ell+3$.
We construct
the complex manifold $(X_k,J)$ 
 from $\C P^2$ by blowing up $k$ times (in the complex category)\footnote
 {
 In this paper, there is constant interplay between complex and symplectic blowing up;  the former procedure replaces a point by the family of complex lines through that point, while in the latter
replaces a ball or ellipsoid by the curve obtained by collapsing its boundary.}  as follows. 
 
   Roughly speaking $X_k$ is obtained by blowing $\C P^2$ up three times at one point $p$ and 
 $\ell$ times at another point $q$.   However there are several 
 inequivalent ways of doing this.  By blowing up repeatedly at 
 some point $p$ we mean the following: blow up at $p=:p_1$
 creating an exceptional divisor $C_{E_1}$ in class $E_1$, then blow up at some point $p_2\in C_{E_1}$ obtaining a new exceptional divisor
 $C_{E_2}$ in class $E_2$ and the proper transform $C_{E_1- E_2}$ of $C_{E_1}$, and continue, at the $i$th stage blowing up at some point $p_i$ on the exceptional divisor $C_{E_{i-1}}$
 to obtain $C_{E_i}$ and $C_{E_{i-1}-E_i}$.   We shall only consider the case when  $p_{i+1}\notin C_{E_{i-1}-E_i}$ so that 
the blowing up process  results in a chain of intersecting 
$-2$ curves in the classes $E_1-E_2, E_2-E_3,\dots$.  
Even so, this process is not unique:
although there
 is only one way of doing this twice,  there is a choice at the third blow up.  To see this, suppose that $C_L$ is the unique line in $\C P^2$ through $p_1$ and with proper transform $C_{L-E_1}$ through $p_2$.  Then its proper transform after the second blow up  is $C_{L-E_1-E_2}$, which intersects $C_{E_2}$ at one point.  If we choose $p_3$ to be
  this point of intersection, the third blow up contains curves $C_1, C_2, C_3, C_0$ in classes 
  $E_1-E_2, E_2-E_3$, $E_3$ and $L-E_{123}$, respectively.  In this case we shall say that the blow up at $p$ is directed by $C_L$:  all such blow ups are locally biholomorphic since they depend only on $p$ and $C_L$.  More generally, if $Q$ is an embedded (perhaps noncompact)   holomorphic curve through $p$, we say that repeated blow ups at $p$ are {\it directed by} $Q$ if 
  we always choose the blow up point $p_i\in C_{E_{i-1}}$ to lie on 
  the proper transform of $Q$.

 When constructing the resolution $(X_k,J)$ as a blow up, we 
 always assume that the $3$-fold blow up at $p$ is directed by a line
 $C_L$, and that the $\ell$-fold blow up at $q$ is generic with respect to $p,C_L$. 
   In other words, we assume $q\notin C_L$, and also choose the center  $q_2$ of the second blow up   not on the proper transform
  $C'$  of the line  through $p,q$  so that  $C'$ (which lies in class $L-E_{14}$) lifts to $(X_k,J)$.   For the moment we make no further restrictions
  on the blow up at $q$ (though we will do this in 
 \S\ref{ss:com}).  Therefore, 
  besides the curves $C_0,\dots,C_3$ mentioned above, $(X_k,J)$ contains 
  holomorphic curves $C_4,\dots, C_{k-1}, C_k$ in classes $E_4-E_5, \dots, 
  E_{k-1}-E_k, E_k$ respectively.  We denote by $\Cc$ the set of curves $C_i, 0\le i\le k-1, i\ne 3$:  cf. Fig \ref{fig:4}.  (The curve $C'$ is irrelevant for now, but appears in the proof of Lemma~\ref{le:toric}.)
  
   \begin{figure}[htbp]
    \centering
   \includegraphics[width=2in]{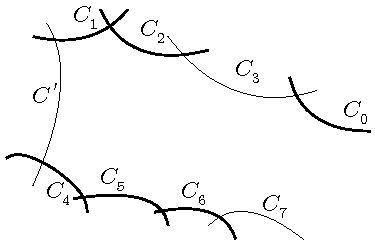} 
   \caption{The curves in $\Cc$ for the case $k=7$,
    together with $C_3, C_7$ and the curve $C'$ in class $L-E_{14}$.}
   \labell{fig:4}
\end{figure}

  Note that  all the curves in $\Cc$  have self-intersection $-2$, and 
 belong to one of three connected components, $C_0, C_1\cup C_2,$ and $C_4\cup\dots\cup C_{k-1}$.  
 It is well known that a string of $-2$ curves of length $s$ blows down to a simple singularity of order $s+1$ and type $A_{s}$.  Thus $C_0$ gives a point of order $2$, 
  $C_1\cup C_2$ a point of order $3$ and 
  $C_4\cup\dots\cup C_{k-1}$ a point of order $\ell$.  
  Hence the blow down of $(X_k,J)$ that contracts these curves gives an orbifold with the same singularities as $Z$.

\begin{figure}[htbp] 
   \centering
   \includegraphics[width=2.5in]{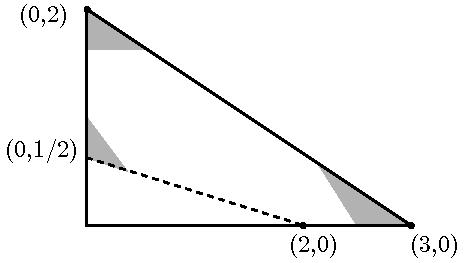} 
   \caption{$\frac 12 T_{1,4}$ embedded in $T_{2,3}$ (where
   $\la T_{a,b}$ denotes the triangle  with vertices $(0,0), (0,\la a),(\la b, 0)$.)  $\De : = \De(\frac 12)$ is defined to be $T_{2,3}\less {\rm int\,}\bigl(\frac 12 T_{1,4}\bigr)$.
   The shaded regions in $\De$ form the image of the neighborhood $\Nn$ of the singular points of $Z$.}
   \labell{fig:2}
\end{figure}

  \begin{lemma}\labell{le:resol} The complex orbifold obtained from $(X_k,J)$ by contracting the three components of $\Cc$ is $Z: = \C P^{2}_{1,2,3}\#{\ov \C P}\,\!^2_{1,1,\ell}$.
  \end{lemma}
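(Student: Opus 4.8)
The plan is to contract the three connected components of $\Cc$ separately. They are pairwise disjoint (for instance $C_0\cdot C_1=C_0\cdot C_2=0$, and the components lying over $p$ and over $q$ involve disjoint sets of exceptional classes), so the contractions may be carried out independently and the resulting singular points analyzed one at a time. The text already records that $C_0$ (class $L-E_{123}$) contracts to a point of order $2$, the chain $C_1\cup C_2$ to a point of order $3$, and the chain $C_4\cup\dots\cup C_{k-1}$ to a point of order $\ell$, so these become the three singular points $p_2,p_3,p_\ell$ of $Z$. What remains is to identify the ambient complex orbifold, together with the surviving curves $C_3$ (class $E_3$) and $C_k$ (class $E_k$) that pass through these points.

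First I would treat the triple blow up at $p$ using the toric model of $\C P^2_{1,2,3}$ from Example~\ref{ex:ex}. Its minimal resolution, obtained by the standard toric subdivision of the two singular cones, is $\C P^2$ blown up three times at infinitely near points directed along a toric line; its exceptional configuration consists of one $-2$ curve resolving the order-$2$ ($A_1$) vertex and a chain of two $-2$ curves resolving the order-$3$ ($A_2$) vertex, while the proper transform of the directing line survives as the coordinate line $\C P^1_{2,3}$ joining the two singular points. Since a blow up directed by a line depends only on the pair $(p,C_L)$ and every line is projectively equivalent to a toric one, the triple blow up in $(X_k,J)$ reproduces exactly this resolution: the $-2$ curves are $C_0$ and $C_1\cup C_2$, and $C_3$ is the proper transform of the directing line. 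Contracting $C_0,C_1,C_2$ therefore inverts the resolution and yields $\C P^2_{1,2,3}$, with $C_3$ mapping to $\C P^1_{2,3}$. A short bookkeeping of self-intersections confirms this: contracting the single $-2$ curve $C_0$ and the $A_2$ chain, each meeting $C_3$ once, raises $C_3^2=-1$ to the $\Q$-self-intersection $-1+\frac12+\frac23=\frac16$, which is precisely the self-intersection of the degree-one coordinate line $\C P^1_{2,3}$ in $\C P^2_{1,2,3}$.

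Next I would treat the $\ell$-fold blow up at $q$. Because this blow up is generic with respect to $(p,C_L)$, its centre lies over a smooth point of $\C P^2_{1,2,3}$ off $\C P^1_{2,3}$. The prescription used to build $X_k$ — blow up $q$, then repeatedly blow up a point of the newest exceptional divisor lying off the proper transform of the previous one — is exactly the minimal resolution of a weighted blow up with weights $(1,\ell)$: it produces the $A_{\ell-1}$ chain $C_4\cup\dots\cup C_{k-1}$ of $-2$ curves together with the $-1$ curve $C_k$ meeting its end once. Contracting the chain creates the order-$\ell$ point $p_\ell$ of type $\frac1\ell(1,-1)$ and sends $C_k$ to an orbifold sphere of $\Q$-self-intersection $-1+\frac{\ell-1}{\ell}=-\frac1\ell$, namely a copy of $\C P^1_{1,\ell}$ carrying exactly the self-intersection of the exceptional divisor of a $(1,\ell)$-weighted blow up. In the language of the orbifold blow up described in Lemma~\ref{le:T}, this is the connected sum with $\ocp^2_{1,1,\ell}$. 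Combining the two contractions identifies the resulting complex orbifold with $\C P^2_{1,2,3}\#\ocp^2_{1,1,\ell}=Z$.

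The main obstacle is the first part. The contraction of chains of $-2$ curves to $A_n$ singularities is classical and may be used freely, but one must verify that the directed-by-line triple blow up coincides on the nose with the minimal toric resolution of $\C P^2_{1,2,3}$ — correctly matching the two singular points of orders $2$ and $3$, and keeping the self-intersection corrections straight so that the surviving curve $C_3$ is identified with the coordinate line $\C P^1_{2,3}$. Indeed, it is precisely the condition that the three centres lie on a line (so that the class $L-E_{123}$ is represented by a $-2$ curve $C_0$) that distinguishes this resolution from a generic triple blow up. Once the toric description of $\C P^2_{1,2,3}$ and its resolution is in hand, the weighted blow up at the smooth point $q$ is the routine resolution of a single $A_{\ell-1}$ cyclic quotient singularity.
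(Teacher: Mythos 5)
Your argument is correct in outline and reaches the right conclusion, but it is genuinely different from the paper's proof, so a comparison is in order. The paper does not contract the two singular configurations separately: it uses a global toric model of $Z$ itself (the moment polytope $\De$ of Figure \ref{fig:2}, obtained by cutting the corner triangle $\la T_{1,\ell}$ out of $T_{2,3}$), invokes Fulton's toric resolution of singularities \cite{Fu}, and checks that the resolved fan --- the conormal list attached to the approximation $\De_\eps$ of Figure \ref{fig:6} --- is exactly the fan of a $k$-fold blow up of $\C P^2$ carrying the configuration $\Cc$; contracting $\Cc$ then inverts Fulton's resolution in one stroke. You instead localize: projective equivalence of (point, line) pairs identifies the directed triple blow up at $p$ with the minimal toric resolution of $\C P^2_{1,2,3}$, and the chain blow up at $q$ is identified with the minimal resolution of a $(1,\ell)$-weighted blow up at a smooth point off $D_1$. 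Your route has the advantage of dealing directly with the general complex structure $J$: the toric resolution literally produces the toric structure $J_T$, in which the blow ups at $q$ are also directed by a line, so the paper's closing claim that \lq\lq the resolution described by Fulton is precisely $(X_k,J)$'' is, strictly speaking, an identification of underlying orbifolds rather than of complex structures. The price you pay is the local fact that any chain blow up --- each centre on the newest exceptional curve and off the previous proper transform --- is directed by some smooth curve germ, hence locally biholomorphic to the toric model and contractible to a weighted blow up; you assert this rather than prove it, but it is standard and on the same footing as facts the paper itself quotes without proof.

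One identification in your second paragraph is backwards, and is moreover inconsistent with your own closing paragraph. In the minimal resolution of $\C P^2_{1,2,3}$, viewed as the triple blow up of $\C P^2$ directed by a line $C_L$, the proper transform of the directing line is the curve in class $L-E_{123}$, that is $C_0$: it is the $-2$ curve resolving the $A_1$ point, and it gets \emph{contracted}, not preserved. The curve that survives and maps onto $\C P^1_{2,3}$ is the \emph{last exceptional curve} $C_3$, in class $E_3$, exactly as in Definition \ref{def:res}, where $D_1=\Phi_J(C_3)$. So your statements \lq\lq the proper transform of the directing line survives as the coordinate line $\C P^1_{2,3}$'' and \lq\lq $C_3$ is the proper transform of the directing line'' should be corrected accordingly; your final remark, that the collinearity of the three centres is what makes $L-E_{123}$ into a $-2$ curve $C_0$, is the accurate statement. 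This slip does not damage the proof: your intersection bookkeeping uses the correct pattern ($C_3^2=-1$, $C_3\cdot C_0=C_3\cdot C_2=1$), the pushforward computation $D_1^2=-1+\tfrac12+\tfrac23=\tfrac16$ is right, as is $D_2^2=-\tfrac1\ell$, and the conclusion that contracting $C_0\cup C_1\cup C_2$ yields $\C P^2_{1,2,3}$ with $C_3\mapsto\C P^1_{2,3}$ stands.
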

  \begin{proof} We shall assume that $\ell=4$ for simplicity.  The case $\ell=5$ is similar.
Denote by $\la T_{a,b}$, where $a<b$, the triangle in $\R^2$ with vertices $(0,0), (0,\la a),(\la b, 0)$; see Figure \ref{fig:2}.  
As explained in Example \ref{ex:ex} the triangle
$T_{2,3}$ is the  the moment polytope for the weighted 
projective space $(\C P^{2}_{1,2,3}, \tau_{1,2,3})$. Similarly, the complement $\De: = \De(\la)$ 
of ${\rm int}\,\bigl(\la T_{1,4}\bigr)$ in $T_{2,3}$ is the moment polytope of its blow up
$(Z, c\om_\ka)$ where $\la: = \frac {3(\ka+4)}{2(\ka+6)}$ and $ c: = 
\frac 6{\ka+6}$.  (These constants can be worked out from equations \ref{eq:int} and \ref{eq:omZ}.)
  
  Fulton explains in \cite{Fu} how to resolve the singularities of
  a toric orbifold by blowing up. Because he is working in the complex 
  rather than symplectic category, he describes the toric variety   by its fan (the set of conormals to the facets); the 
  process of blowing up adds extra elements to the fan.  One can check that the resulting fan is precisely that of the 
  \lq\lq approximation"  $\De_\eps$ to $\De$ that is illustrated in  
 Figure \ref{fig:6} below.  Here the edges going clockwise  from  $C_1^\eps$ have outward conormals:
\begin{gather}\notag
(0,1), \;\; (1,2),\;\; (2,3)^*,\;\; (1,1),\;\; (0,-1)^*,\\ \notag
 (-1,-4)^*,\;\; (-1,-3),\;\; (-1,-2),\;\; (-1,-1),\;\; (-1,0)^*,
\end{gather}
where the starred vectors are also conormals of $\De$.
Therefore $\De_\eps$ is smooth (i.e. the determinant of any successive pair of edges has absolute value $1$), and  it is easy to check that each of its short edges $C_i^\eps$ represents a sphere with self intersection $-2$ (where $i=0,1,2,4\dots,7$). The corresponding symplectic toric manifold is a symplectic version of $(X_7,J)$,
with the curve $C_i$ in $\Cc$ identified to the  short edge $C_i^\eps$.  Thus the resolution described by Fulton is precisely $(X_k,J)$.   
\end{proof}

\begin{defn}\labell{def:res}   For any complex structure $J$ on $X_k$ constructed as above, we shall call the  holomorphic blow down map
$\Phi_J: (X_k,J)\to Z$ the 
{\bf resolution} of $Z$. Further,  we denote by $\Cc$ the collection of curves
$C_i, 0\le i < k, i\ne 3,$  in $X_k$, and by
$$
D_1: = \Phi_J(C_3)\cong \C P^1_{2,3},\quad D_2: = 
\Phi_J(C_k)\cong {\C P}\,\!^1_{1,\ell}
$$
the two divisors in $Z$.
\end{defn}

Note that
$\Phi_J$  is bijective outside the singular points and contracts each connected component of $\Cc$ to one of the singular points of $Z$. 
We shall say more about the resolution as a symplectic manifold later.  For now we shall use it to understand the (co)homology of $Z$.

\begin{lemma}\labell{le:H2} For $\ell = 4,5,$ 
$H^1(Z\less \bp)$ is a free abelian group.  Moreover,
there is a commutative diagram with exact rows
$$
\begin{array}{ccccccccc} 0&\to &H^2(Z;\Z)&\to& H^2(Z\less\bp;\Z)&\to & H^2( \Nn\less \bp;\Z)&\to &0\\
&&\cong\downarrow\;\;\;&&\cong\downarrow\;\;\;&&\cong\downarrow\;\;\;&&\\
 0&\to &\Z^2&\stackrel{\al}\to& \Z^2 &\to &\Z_6\oplus \Z_{\ell}&\to&0,
\end{array}
$$
where the maps in the top row are induced by restriction and where
$\al(m,n) =(6m,\ell n)$.
\end{lemma}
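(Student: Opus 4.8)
The plan is to read off the whole diagram from the local structure of the three singular points together with a Mayer--Vietoris argument for the cover of $Z$ by $Z\less\bp$ and the interior of $\Nn=\cup_m\Nn_m$, whose overlap is $\Nn\less\bp$. Each $\Nn_m$ is the cone $B/\Z_m$ on the lens space $S^3/\Z_m$, so it is contractible and $\Nn_m\less\{p_m\}$ is homotopy equivalent to $S^3/\Z_m$. The standard lens space computation gives $H^1(S^3/\Z_m;\Z)=0$ and $H^2(S^3/\Z_m;\Z)\cong H_1(S^3/\Z_m;\Z)\cong\Z_m$, so that $H^1(\Nn\less\bp)=0$ and $H^2(\Nn\less\bp)\cong\Z_2\oplus\Z_3\oplus\Z_\ell\cong\Z_6\oplus\Z_\ell$; this is the right-hand vertical isomorphism. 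Feeding $H^j(\Nn)=0$ for $j\ge 1$ and $H^1(\Nn\less\bp)=0$ into Mayer--Vietoris at once yields $H^1(Z\less\bp)\cong H^1(Z)$ together with the exact sequence $0\to H^2(Z)\to H^2(Z\less\bp)\to H^2(\Nn\less\bp)\to H^3(Z)$ of restriction maps. Since $Z$ is simply connected (it is a quotient of the simply connected $X_k$ obtained by collapsing connected subcomplexes), $H^1(Z)=0$, so $H^1(Z\less\bp)=0$, which is trivially free abelian; in any case $H^1(\,\cdot\,;\Z)=\mathrm{Hom}(H_1,\Z)$ is automatically free by universal coefficients. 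There remain only the surjectivity of the last restriction map and the identification of the two left-hand groups with $\Z^2$.

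I would next compute $H_2(Z)$ from the resolution $\Phi_J\colon X_k\to Z$, $k=\ell+3$. Since $\Phi_J$ restricts to a homeomorphism $X_k\less|\Cc|\to Z\less\bp$, where $|\Cc|$ denotes the union of the curves in $\Cc$, a standard relative homeomorphism argument gives $H_2(X_k,|\Cc|)\cong H_2(Z,\bp)$, and as $\bp$ is finite, $H_2(Z,\bp)\cong H_2(Z)$. Because the curves in $\Cc$ form three disjoint chains of $2$-spheres, $H_1(|\Cc|)=0$, so $H_2(Z)\cong\mathrm{coker}\,\iota_*$, where $\iota_*\colon H_2(|\Cc|)\to H_2(X_k)$ sends the sphere classes to $L-E_{123},\,E_1-E_2,\,E_2-E_3,\,E_4-E_5,\dots,E_{k-1}-E_k$. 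A direct check (imposing these relations on $H_2(X_k)=\Z^{k+1}$) shows that this rank-$(k-1)$ sublattice is saturated and that the cokernel is free of rank $2$, generated by the images of $E_3$ and $E_k$, i.e.\ by the divisor classes $[D_1]$ and $[D_2]$. Thus $H_2(Z)\cong\Z\langle[D_1]\rangle\oplus\Z\langle[D_2]\rangle$, and since $H_1(Z)=0$, universal coefficients give the left vertical isomorphism $H^2(Z)\cong\mathrm{Hom}(H_2(Z),\Z)\cong\Z^2$.

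The crux is the middle group and the map $\al$, which I would extract from the Mayer--Vietoris sequence in homology, $0\to H_2(Z\less\bp)\to H_2(Z)\stackrel{\p}{\to}\Z_2\oplus\Z_3\oplus\Z_\ell\to H_1(Z\less\bp)\to 0$, in which $\p$ records the linking of a $2$-cycle with the three punctures. Here I would use that $D_1=\Phi_J(C_3)$ passes through exactly $p_2$ and $p_3$, while $D_2=\Phi_J(C_k)$ passes through only $p_\ell$: in the local model $\C^2/\Z_m$ the divisor is a coordinate axis, and its intersection with the link $S^3/\Z_m$ is the image of $\{|z_1|=1,\ z_2=0\}$, a loop whose lift runs from a point to its image under the generator of $\Z_m$ and hence generates $H_1(S^3/\Z_m)\cong\Z_m$. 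Consequently $\p[D_1]$ generates the summand $\Z_2\oplus\Z_3\cong\Z_6$ and $\p[D_2]$ generates $\Z_\ell$, so $\p$ is surjective. This forces $H_1(Z\less\bp)=0$ and $H_2(Z\less\bp)=\ker\p\cong\Z^2$, and since $\p[D_1]$ and $\p[D_2]$ have orders $6$ and $\ell$ one gets $\ker\p=\Z\langle 6[D_1]\rangle\oplus\Z\langle\ell[D_2]\rangle$.

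Finally I would dualize. As $H_1(Z\less\bp)=0$, universal coefficients give the middle vertical isomorphism $H^2(Z\less\bp)\cong\mathrm{Hom}(H_2(Z\less\bp),\Z)\cong\Z^2$ with no torsion, and by naturality they identify the restriction $H^2(Z)\to H^2(Z\less\bp)$ with $\mathrm{Hom}(\,\cdot\,,\Z)$ applied to the inclusion $\ker\p\hookrightarrow H_2(Z)$. In the bases dual to $\{[D_1],[D_2]\}$ and $\{6[D_1],\ell[D_2]\}$ this dual map is exactly $(m,n)\mapsto(6m,\ell n)$, so its cokernel is $\Z_6\oplus\Z_\ell$; comparing with the sequence of the first paragraph shows that the last restriction $H^2(Z\less\bp)\to H^2(\Nn\less\bp)$ is onto and that the top row is short exact, completing the diagram. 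The genuine obstacle in all of this is the linking computation of the previous paragraph: it is what produces the precise integers $6=2\cdot3$ and $\ell$ (and hence the surjectivity), and thereby makes the statement correct over $\Z$ rather than merely over $\Q$.
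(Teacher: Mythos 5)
Your proposal is correct, and it reaches the crucial computation by a genuinely different route from the paper's proof. The paper runs Mayer--Vietoris \emph{upstairs} on the resolution, decomposing $X_k=(X_k\less V)\cup V$ with $V=\Phi_J^{-1}(\Nn)$: there $H_2(X_k\less V)$ is identified with the sublattice of $H_2(X_k)$ orthogonal to the exceptional classes under the intersection form (generated by $3L-E_{123}$ and $E_{4\dots k}$), surjectivity of $H_2(X_k)\to H_1(\p V)$ is checked on explicit classes ($L$, $L-E_3$, $E_7$ when $\ell=4$), and everything is transported to $Z$ through the homeomorphism $X_k\less V\cong Z\less\Nn$ and a comparison of the two Mayer--Vietoris sequences; the integers $6$ and $\ell$ arise as the images $6E_3$ and $\ell E_k$ of the orthogonal-complement generators. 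You instead run Mayer--Vietoris \emph{downstairs} on $Z=(Z\less\bp)\cup\Nn$, use the resolution only to present $H_2(Z)$ as the free cokernel $\Z[D_1]\oplus\Z[D_2]$ of $H_2(|\Cc|)\to H_2(X_k)$, and obtain the key sublattice as the kernel of the connecting homomorphism $\p\colon H_2(Z)\to H_1(\Nn\less\bp)$, computed geometrically: $D_1$ links $p_2$ and $p_3$ once each, $D_2$ links $p_\ell$ once, so $\ker\p=6\Z[D_1]\oplus\ell\Z[D_2]$; both proofs then dualize by universal coefficients. Your route makes the arithmetic transparent ($6=2\cdot3$ exactly because $D_1$ passes through both $p_2$ and $p_3$), yields $H_1(Z\less\bp)=0$ and the surjectivity of $H^2(Z\less\bp)\to H^2(\Nn\less\bp)$ explicitly (the latter by comparing orders of finite groups, a point the paper leaves implicit), and sidesteps the duality argument needed to know that $H_2(X_k\less V)$ is the \emph{full} orthogonal complement of $H_2(V)$, which the paper asserts without proof. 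What the paper's route buys is a description of the image of $H_2(Z\less\bp)$ in $H_2(Z)$ by explicit classes pulled back from the resolution, $3L-E_{123}$ and $E_{4\dots k}$, which is the form in which the computation is reused afterwards, e.g.\ in Corollary~\ref{cor:H2}.
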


\begin{proof}   We shall prove this for the case $\ell=4$ and then indicate the few changes that need to be made when $\ell=5$.  We  shall
calculate $H^*(Z): =H^*(Z;\Z)$ by 
comparing $Z$ with its resolution $X_7$.  

Denote by $V\subset X: = X_7$ the inverse image $\Phi_J^{-1}(\Nn)$
where $\Phi_J$ is as in Definition~\ref{def:res}.  
Then $\p V \cong \p \Nn$ is a disjoint union of three lens spaces and hence has $H_1(\p V)\cong \Z_2\oplus \Z_3 \oplus \Z_4$, while $H_2(\p V) = 0$.  Thus from the Mayer-Vietoris sequence of the decomposition 
$X = (X\less V) \cup V'$ (where $V'\supset V$ is a slight enlargement of $V$) we obtain the exact sequence
$$
0\to H_2(X\less V) \oplus H_2(V) \to H_2(X)\to H_1(\p V)\to H_1(X\less V)\to 0,
$$
 where we use integral coefficients.  Now $H_2(V)$ is generated by
 $E_1-E_2, E_2-E_3, L-E_{123}, E_4-E_5, E_5-E_6, E_6-E_7$, while
 $H_2(X\less V)$ is generated by those elements of  $H_2(X)$ that are orthogonal to $H_2(V)$ with respect to the intersection pairing. Thus 
 $3L-E_{123}$ and $E_{4567}$ form a generating set for $H_2(X\less V)$.
 Hence  $L, L-E_3, E_7\in H_2(X)$ project to elements  in the quotient $H_1(\p V)$  of 
 orders $2,3,4$  respectively. Thus the map
 $H_2(X)\to H_1(\p V)$ is surjective, so that  $H_1(X\less V) = 0$.
 
 Now consider the commutative diagram induced by $\Phi_J$:
  $$
 \begin{array}{ccccccccc}
 0&\to& H_2(X\less V) \oplus H_2(V)& \to &H_2(X)&\to &H_1(\p V)&\to &0\\
& &\downarrow& &\downarrow&&\downarrow\\
 0&\to& H_2(Z\less \Nn) \oplus H_2(\Nn) &\to& H_2(Z)&\to&
  H_1(\p \Nn)&\to &0\end{array}
  $$
Since $\Phi_J$ is a homeomorphism $X\less V\to Z\less\Nn$ and $H_i(\Nn)  =0$ for $i>0$, we see that $H_2(Z)$ is generated by elements 
$(\Phi_J)_*E_3, (\Phi_J)_*E_7$, while the image of $H_2(Z\less \Nn)$  is generated by 
$6(\Phi_J)_*E_3$ (from  $3L-E_{123}$) and $4(\Phi_J)_*E_7$ (from $E_{4567}$).
 
 Since $X\less V\cong Z\less \Nn$, we know from above that 
  $H_1(Z\less\Nn)=0$.   A similar Mayer-Vietoris sequence argument shows that  $H_1(Z) = 0$.  Hence $H^2(Z\less\Nn)$ and $H^2(Z)$ are both free abelian groups and the map between them is dual to
the inclusion $H_2(Z\less\Nn)\to H_2(Z)$.  This completes the proof when $\ell=4$.

When $\ell=5$ one just needs to add a further blow up to the chain  $E_4,\dots, E_7$.  Thus the generators of  $H_2(X\less V)$ are $3L-E_{123}$
and $E_{4\dots 8}$.  The rest of the argument is essentially  the same.\end{proof}

\begin{cor}\labell{cor:H2} Suppose that $M_\ell$ exists and denote $Y: = H^{-1}(0)$ considered as the boundary of $H^{-1}([-6,0])$.
Then there is a unique class $e_Z(Y)\in H^2(Z;\Q)$ that restricts to the Euler class $e(Y)$ of the locally trivial $S^1$-bundle $Y|_{Z\less\bp}\to Z\less \bp$.  Moreover  if we identify $Z$ with $\C P^2_{1,2,3}\#
\ov{\C P}\,\!^2_{1,1,\ell}$ as in Lemma~\ref{le:T} then
 \begin{equation}\labell{eq:eY}
 [e_Z(Y)]|_{\C P^1_{2,3}} = -{\ts \frac 16}\,\tau_{2,3}, \quad 
 [e_Z(Y)]|_{{\C P}\,\!^1_{1,\ell}} = -{\ts \frac{1}{\ell}}\,\tau_{1,\ell}.
 \end{equation}
 \end{cor}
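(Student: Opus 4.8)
The plan is to get the uniqueness of $e_Z(Y)$ straight from Lemma~\ref{le:H2}, and then to identify the class by comparing the reduced forms $\om_\ka$ at different levels via the Duistermaat--Heckman theorem.

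For the uniqueness statement I would tensor the lower exact row of the diagram in Lemma~\ref{le:H2} with $\Q$. The cokernel $\Z_6\oplus\Z_\ell$ is torsion and so dies, so the restriction map $H^2(Z;\Q)\to H^2(Z\less\bp;\Q)$ becomes the isomorphism $\al\otimes\Q\colon(m,n)\mapsto(6m,\ell n)$. Hence the integral class $e(Y)\in H^2(Z\less\bp;\Z)$, viewed rationally, has exactly one preimage $e_Z(Y)\in H^2(Z;\Q)$, which is the asserted class.

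To pin $e_Z(Y)$ down I would use the family of regular reduced spaces $(Z,\om_\ka)$, $\ka\in(-\ell,\ell)$, furnished by Lemma~\ref{le:T}: these are all the fixed space $Z$ carrying a varying symplectic form. The (orbifold) Duistermaat--Heckman theorem, valid here in the form established by Godinho~\cite{God}, says that $[\om_\ka]$ is affine in $\ka$ with slope equal, up to sign, to the Euler class of the reducing $S^1$-orbibundle. Rationally this slope is a class in $H^2(Z;\Q)$ whose restriction to $Z\less\bp$ is $\pm e(Y)$, so by the uniqueness above it must be $\mp e_Z(Y)$; fixing the sign by the stated convention that $Y=H^{-1}(0)$ bounds the sublevel set $H^{-1}([-6,0])$ (rather than the superlevel set) gives
$$
[\om_\ka]=[\om_0]-\ka\,e_Z(Y)\quad\mbox{in }H^2(Z;\Q).
$$
Restricting this identity to the two divisors $D_1=\C P^1_{2,3}$ and $D_2=\C P^1_{1,\ell}$ and reading the $\ka$-coefficients off (\ref{eq:omZ}) then finishes the computation: the coefficient of $\ka$ in ${\ts\frac{6+\ka}6}\tau_{2,3}$ is ${\ts\frac16}\tau_{2,3}$, forcing $[e_Z(Y)]|_{\C P^1_{2,3}}=-{\ts\frac16}\tau_{2,3}$, and the coefficient of $\ka$ in ${\ts\frac{\ell+\ka}\ell}\tau_{1,\ell}$ is ${\ts\frac1\ell}\tau_{1,\ell}$, forcing $[e_Z(Y)]|_{\C P^1_{1,\ell}}=-{\ts\frac1\ell}\tau_{1,\ell}$, which is (\ref{eq:eY}).

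The two restriction computations are routine. The one genuinely delicate point, and where I expect the main care to be needed, is the sign of the Duistermaat--Heckman slope: one must verify that orienting $Y$ as the boundary of $H^{-1}([-6,0])$ produces the minus sign above, and that the rational orbifold Euler class appearing as the slope is correctly identified with the lift $e_Z(Y)$ of the honest Euler class $e(Y)$ on the smooth locus $Z\less\bp$. Apart from this bookkeeping, everything reduces to the affine dependence already encoded in Lemma~\ref{le:T}.
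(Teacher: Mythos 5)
Your proposal is correct and follows essentially the same route as the paper: uniqueness of the rational lift comes from the exact sequence of Lemma~\ref{le:H2} (whose cokernel is torsion, so rationally the restriction map is an isomorphism), and the explicit formula comes from Godinho's orbifold Duistermaat--Heckman theorem, $e_Z(Y) = -\frac{d}{d\ka}[\om_\ka]$, combined with the $\ka$-linear expressions in (\ref{eq:omZ}). The paper's proof is just a compressed version of this, with the same sign convention coming from viewing $Y$ as the boundary of the sublevel set.
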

 \begin{proof}  The first statement is an immediate consequence of
  Lemmas \ref{le:T} and  \ref{le:H2}.  
  The second  also uses  the fact that 
 $e_Z(Y) = -\frac d{d\ka}[\tau_{\ka}]$ by
  Godinho's generalization of 
the Duistermaat--Heckmann formula.  Note that $e_Z(Y)$
  does restrict to an integral class on $Z\less{\bp}$ because the image of $H_2(Z\less\bp)$ in $H_2(Z)$ is generated by $6E_3, 4E_7$. 
 \end{proof}
 
\subsection{The symplectic topology  of the 
reduced spaces: preliminaries.}\labell{ss:Z}

By Corollary ~\ref{cor:H2}, the reduced space $(Z,\om_\ka)$ for $\ka\in 
(-\ell,\ell)$ is an orbifold blow up.  It can be constructed as a toric manifold  whenever 
$$
\ell+\ka < 3+\ka/2, \quad\mbox{ or equivalently }  -\ell < \ka <2(3-\ell)
$$
 since then the triangle 
$(1+\ka/\ell)T_{1,\ell}$ is a subset of $(1+\ka/6)T_{2,3}$. 
  
The following lemma is proved in \cite[Prop~1.6]{M}. (The argument is explained below.)

\begin{lemma}\labell{le:Z} 
 For all integers $\ell\in [2,6]$
and all $\ka \in (-\ell,\ell)$  there are symplectic orbifolds  $(Z,\om_\ka)$ 
satisfying the conditions in Lemma~\ref{le:T}.
\end{lemma}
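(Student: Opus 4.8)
The plan is to realize $(Z,\om_\ka)$ explicitly as a symplectic orbifold blow up of the weighted projective space $(\C P^2_{1,2,3}, c\,\tau_{1,2,3})$ and to reduce its existence to a symplectic ellipsoid embedding. Recall from Example~\ref{ex:ex} that, since the first weight is $1$, the space $\C P^2_{1,2,3}$ is the compactification of the open ellipsoid $E(2,3)$ obtained by collapsing the characteristic flow on its boundary, and that the scaling factor $c$ is pinned down by (\ref{eq:int}) together with the requirement $[\om_\ka]|_{\C P^1_{2,3}} = \frac{6+\ka}6\,\tau_{2,3}$ from (\ref{eq:omZ}). Likewise, as explained in the proof of Lemma~\ref{le:T}, the orbifold blow up with weights $(1,\ell)$ of size $\de$ is by definition the operation of removing an embedded ellipsoid $\de\,E(1,\ell)$ from the interior and collapsing its boundary along the characteristic flow, producing the connected sum $\C P^2_{1,2,3}\#\ocp^2_{1,1,\ell}$; the second condition $[\om_\ka]|_{\C P^1_{1,\ell}} = \frac{\ell+\ka}\ell\,\tau_{1,\ell}$ of (\ref{eq:omZ}) fixes $\de$. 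So I would first translate (\ref{eq:omZ}) into explicit values of $c$ and $\de$ as functions of $\ka$, which simultaneously guarantees the correct diffeomorphism type and the correct cohomology class demanded by Lemma~\ref{le:T}.

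With these identifications, $(Z,\om_\ka)$ exists precisely when there is a symplectic embedding $\de\,E(1,\ell)\hookrightarrow c\,E(2,3)$, that is, after rescaling so the target is $E(2,3)$, an embedding $\la\,E(1,\ell)\hookrightarrow E(2,3)$ for a corresponding parameter $\la=\la(\ka)$. One checks that $\la$ is a strictly increasing function of $\ka$ with $\la\to 0$ as $\ka\to-\ell$. The existence of these embeddings for every $\ka\in(-\ell,\ell)$ is exactly the content of the main ellipsoidal embedding result of \cite[Prop~1.6]{M}, so the bulk of the remaining work is the careful bookkeeping of (\ref{eq:omZ}) and a verification that the upper endpoint of the admissible range of $\la$ corresponds to $\ka=\ell$.

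For the easy part of the range I would give a direct, self-contained construction. Whenever the triangle $\la T_{1,\ell}$ fits inside $T_{2,3}$ --- equivalently $-\ell<\ka<2(3-\ell)$, as noted just before the lemma --- the embedding is visible torically: the complement $\De(\la):=T_{2,3}\less{\rm int\,}(\la T_{1,\ell})$ is a smooth moment polytope which, by the computation in the proof of Lemma~\ref{le:resol}, is the polytope of $(Z,c\om_\ka)$. This settles $\ell=2$ entirely (where the toric window is all of $(-\ell,\ell)$) and provides a construction near the lower end of the range for every $\ell$.

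The main obstacle is the ellipsoid embedding for $\ka$ outside the toric window, and in particular as $\ka\uparrow\ell$. Here one cannot merely cut polytopes, and the embedding must be produced by the methods of \cite{M}. The obstruction to enlarging $\la$ comes from the exceptional classes $\HE_i,\TE_j$ of (\ref{eq:Hat}) and (\ref{eq:Til}), whose symplectic areas must remain positive; cf. Remark~\ref{rmk:Bert}(i). The key point, which is precisely the substantive input supplied by \cite[Prop~1.6]{M}, is that the smallest of these areas stays positive throughout the open range and degenerates to zero exactly at $\ka=\ell$, so that no exceptional class obstructs the embedding for $\ka<\ell$. Establishing this positivity is the hard part of the argument; everything else is the translation between the orbifold blow up, the connected-sum description, and the embedding problem.
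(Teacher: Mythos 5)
Your proposal is correct and takes essentially the same route as the paper: the paper's proof of Lemma~\ref{le:Z} is exactly the reduction of the orbifold blow-up description from Lemma~\ref{le:T} and Example~\ref{ex:ex} to the ellipsoid embedding problem $\la E(1,\ell)\hookrightarrow E(2,3)$, which it settles by citing \cite[Prop~1.6]{M}, with the toric construction noted for the window $-\ell<\ka<2(3-\ell)$. (One small caveat: for $\ell=6$ that toric window is empty, so your claim that it covers the lower end of the range for \emph{every} $\ell$ fails there, but this is immaterial since the citation to \cite{M} covers the full range.)
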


\begin{rmk}\labell{rmk:ZT}\rm  In fact, if all we are interested in is 
existence then we do not need this result from
\cite{M}  because $[\om_0]$ is the anticanonical class $-K$ on $Z$. 
 Hence, provided that we give $Z$ a sufficiently generic complex structure $J_Z$,
 we can take $\om_0$ to be the K\"ahler form induced from
 projective space by the embedding given by sections of
 a suitable multiple of the anticanonical class;
 and
 then define $\om_\ka$ for $-\ell<\ka<0$ by decreasing the size of the exceptional divisor ${\C P}^1_{1,4}$, or, equivalently, by decreasing the size of the ellipsoid $\la
E(1,4)$ that is embedded in  $E(2,3)$.  This constructs  
$(Z,\om_\ka)$ for $-\ell<\ka\le0$.  The result for $\ka>0$ follows by symmetry.  More precisely, we will see in the proof of Proposition~\ref{prop:ell} given in \S\ref{s:3} below
that the diffeomorphism $\psi$ of Proposition~\ref{prop:psi}
covers a diffeomorphism $\psi_Z$ of $Z$ such that $\psi_Z^*([\om_{\ka}]) = -[\om_{-\ka}]$.

But notice that we do need $J_Z$ to be \lq\lq generic". 
In particular we cannot use the toric structure $J_T$ because this is not NEF;
for example when $\ell=4$ the edge in Figure \ref{fig:6} with conormal $(0,-1)$ pulls back to a line in $X_7$ 
in class $L-E_{4567}$
and $K\cdot(L-E_{4567})=1$. Suitable complex 
structures are constructed in 
Lemmas~\ref{le:J7} and \ref{le:J8}.\end{rmk}

The above remarks, together with Proposition~\ref{prop:Zuniq} below,
 are all that is needed  to construct 
$M_\ell$ as a symplectic, or indeed as a complex, manifold.  However,
to establish the uniqueness results, we need to know much more about the reduced spaces $Z$ than simply the existence of suitable symplectic forms.
We now adapt a definition from Gonzalez \cite{Gonz}.
The word \lq\lq rigid" is used here by analogy with the complex case.  
Symplectic forms can always be deformed, but in the rigid case these deformations have very little consequence, and the symplectic structure is essentially  unique.

\begin{defn}\labell{def:rig}  A symplectic orbifold $(Z,\om)$ 
 is called {\bf rigid} if the following conditions hold:
\SSS

\NI {\rm (a)} ({\bf Uniqueness.}) Any two cohomologous symplectic forms on $Z$ are diffeomorphic;\SSS

\NI {\rm (b)} ({\bf Deformation implies isotopy.})
 Every path $\om_t, t\in [0,1],$ of symplectic forms on $Z$ with  $[\om_1]=[\om_0]$ can be homotoped through
families of symplectic forms with the fixed endpoints $\om_0$ and $\om_1$ to an isotopy, i.e. a path $\om_t'$ such that $[\om_t']$ is constant;
\SSS

\NI {\rm (c)} ({\bf Connectness.}) 
For all symplectic forms $\om'$ on $Z$ the group
  $\Symp_H(Z,\om')$ of symplectomorphisms that act trivially on integral  homology is connected.
\end{defn}

Note that condition (c)  implies that the diffeomorphism in (a) is determined uniquely up to symplectic isotopy by its action on $H_*(Z;\Z)$.

\begin{rmk}\rm  To put our results on $Z$ in perspective, we observe that
the papers \cite{LM,Mdef} show that $(X_k, \om)$ satisfies the first two of these conditions; it satisfies (c)  when $k\le 3$ (cf. Lalonde--Pinsonnault \cite{LP} for the case $k\le 2$  and Pinsonnault \cite{Pin2} for $k=3$), the case $k=4$ is open,  but when $k\ge 5$  Seidel showed in \cite{Sei} that there are $\om'$ on $X_k$ for which (c) does not hold, by constructing symplectomorphisms that twist one Lagrangian sphere around another.   But none of these  Dehn twists can be constructed so as to descend to $Z$.
Hence the nonrigidity of these $X_k$  does not contradict 
the rigidity of $Z$.
 \end{rmk}
 
 \begin{lemma} The weighted projective space $\C P^2_{1,2,3}$  is
symplectically rigid.
\end{lemma}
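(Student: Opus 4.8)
The plan is to reduce the rigidity of $\C P^2_{1,2,3}$ to that of its minimal resolution, which is the three-point blow up $X_3=\C P^2\#3\ocp^2$. Specializing Lemma~\ref{le:resol} to the case with no order-$\ell$ point (that is, omitting the blow up at $q$) shows that $\C P^2_{1,2,3}$ has exactly two singular points, of orders $2$ and $3$, and that contracting the configuration $\Cc=C_0\cup C_1\cup C_2$ in $X_3$ recovers it: here $C_0$ in class $L-E_{123}$ gives the order-$2$ point and the chain $C_1=E_1-E_2,\ C_2=E_2-E_3$ gives the order-$3$ point, while $C_3=E_3$ maps to the divisor $D_1\cong\C P^1_{2,3}$. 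First I would fix the symplectic resolution $\Phi\colon X_3\to Z$, where $Z:=\C P^2_{1,2,3}$, so that a symplectic form $\om$ on $Z$ corresponds to a symplectic form $\Tilde\om$ on $X_3$ whose class assigns small positive areas $\eps_i$ to the $C_i$ and agrees with $[\om]$ away from $\Cc$, and which contracts back to $\om$.

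The key input is that $X_3$ is itself symplectically rigid. By \cite{LM,Mdef} it satisfies conditions (a) and (b) of Definition~\ref{def:rig} for every symplectic form, and by Pinsonnault \cite{Pin2} (building on \cite{LP}) the group $\Symp_H(X_3,\Tilde\om)$ is connected, so (c) holds as well. It therefore suffices to transfer each of the three conditions across the resolution. The transfer rests on a dictionary between the two spaces: by Lemma~\ref{le:phiZ} any diffeomorphism (or symplectomorphism) of $Z$ may be taken linear near the two singular points, and such a map lifts canonically to a self-map of $X_3$ preserving the configuration $\Cc$; conversely any self-map of $X_3$ preserving $\Cc$ descends to $Z$.

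For condition (a), I would resolve two cohomologous forms $\om,\om'$ on $Z$ using the same small areas $\eps_i$, obtaining cohomologous forms on $X_3$; these are diffeomorphic by (a) for $X_3$, and, after composing with a configuration-preserving diffeomorphism to restore each class $[C_i]$, the resulting map preserves $\Cc$ and descends to $Z$. Condition (b) is handled identically, resolving a deformation path on $Z$ to one on $X_3$ with the $\eps_i$ held small and constant, applying (b) upstairs, and pushing the resulting isotopy back down. For condition (c), a symplectomorphism of $(Z,\om)$ acting trivially on $H_*(Z;\Z)$ lifts to one of $(X_3,\Tilde\om)$ that preserves $\Cc$ and acts trivially on $H_*(X_3;\Z)$; by connectedness of $\Symp_H(X_3,\Tilde\om)$ it is symplectically isotopic to the identity.

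The main obstacle is the last point: the isotopy to the identity furnished by rigidity of $X_3$ need not preserve the exceptional configuration $\Cc$, and only configuration-preserving isotopies descend to $Z$. What I expect to need is a \emph{relative} form of connectedness for the pair $(X_3,\Cc)$ — equivalently, that the stabilizer of $\Cc$ inside $\Symp_H(X_3,\Tilde\om)$ is connected. This should follow from a Gromov-type argument: for a tame $J$ each class $[C_i]$ is represented by a unique embedded $J$-holomorphic $(-2)$-sphere, the space of such configurations is connected, and the evaluation fibration of $\Symp_H(X_3,\Tilde\om)$ over this configuration space has connected fibres, so restricting to the stabilizer of $\Cc$ does not disconnect the group. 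Granting this, all three conditions descend, and $\C P^2_{1,2,3}$ is rigid.
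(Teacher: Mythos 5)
Your overall strategy --- resolve $\C P^2_{1,2,3}$ by $X_3$ and transfer rigidity across the resolution --- is in the spirit of the paper, which also works on the resolution, but the step you single out as the crux is exactly where the argument breaks. A symplectic isotopy on $X_3$ descends to the orbifold only if it is standard near the exceptional configuration $\Cc$, so what you need is connectedness of the stabilizer of $\Cc$ in $\Symp_H(X_3,\Tilde\om)$, not connectedness of $\Symp_H(X_3,\Tilde\om)$ itself; and your proposed Gromov-type proof of this relative statement rests on the claim that for a tame $J$ each class $[C_i]$ has a unique embedded $J$-holomorphic representative. That claim is false: $C_0=L-E_{123}$, $C_1=E_1-E_2$, $C_2=E_2-E_3$ are $(-2)$-classes with $c_1([C_i])=0$, so the unparametrized moduli spaces have virtual dimension $-2$ and the Gromov--Witten invariants vanish. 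For generic tame $J$ these classes have no $J$-holomorphic representative at all (just as, for the integrable structure obtained by blowing up three non-collinear points, $L-E_{123}$ has no holomorphic representative). Hence there is no fibration of the space of tame almost complex structures over a configuration space of copies of $\Cc$, and the evaluation argument collapses. This is precisely why the paper never asks for holomorphic $(-2)$-spheres for arbitrary tame $J$: in Lemma~\ref{le:tech} holomorphic representatives are produced only for $-1$-classes such as $E_3$, $E_k$ and $L-E_{14}$, while the curves in $\Cc$ are kept holomorphic by fiat, by restricting to the set $\Jj_\Vv(\Om)$ of structures equal to a fixed $J_\Vv$ near $\Cc$.

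The same need for relative statements also undercuts your transfers of (a) and (b): the diffeomorphism provided by uniqueness on $X_3$ is uncontrolled near $\Cc$, and ``composing with a configuration-preserving diffeomorphism to restore each class $[C_i]$'' presupposes exactly the uniqueness-of-configurations statement that fails for $(-2)$-classes; moreover, setwise preservation of $\Cc$ is not enough for the descended map to be a symplectomorphism of the orbifold in the sense of the paper, since one needs the local lifts of (\ref{eq:locphi}). This is why the paper proves Proposition~\ref{prop:Zuniq} and Lemma~\ref{le:rig} by re-running the uniqueness and inflation arguments relative to $\Cc$ (blowing down to $\C P^2$ and applying Gromov's theorem there, and inflating only along curves disjoint from $\Cc$), and proves condition (c) by the toric argument of Lemma~\ref{le:toric}, in which every intermediate map is either the identity near $\Cc$ or a local torus rotation and hence descends. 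Adapting those arguments, as the paper indicates, is the correct route for (a) and (c); note also that for $\C P^2_{1,2,3}$ condition (b) needs no transfer at all, since $H^2$ has rank one and any deformation can be rescaled to an isotopy.
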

\begin{proof}  Condition (b) is obviously satisfied  since
we can make arbitrary changes in the cohomology class by rescaling. The other conditions can be proved by adapting the arguments given below for $Z$.  Further details are left to the reader.
 \end{proof}
 
 The  proof of the following theorem 
 takes up the rest of this section.
 
 \begin{thm}\labell{thm:rigid}  The orbifold $Z = \C P^2_{1,2,3}\#
\ov{\C P}\,\!^2_{1,1,\ell}$ is symplectically rigid.
\end{thm}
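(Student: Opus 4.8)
The plan is to establish the three conditions (a), (b), (c) of Definition~\ref{def:rig} for the orbifold $Z = \C P^2_{1,2,3}\#\ov{\C P}\,\!^2_{1,1,\ell}$ by systematically transferring the corresponding statements from its symplectic resolution $X_k$, $k=\ell+3$, via the blow-down map $\Phi_J$. The guiding principle is that the results of \cite{LM,Mdef} already give conditions (a) and (b) for the smooth manifolds $X_k$, and the subtlety is exactly that $X_k$ itself fails condition (c) for $k\ge 5$ (by Seidel's Dehn twist construction in \cite{Sei}). So the whole content lies in showing that the pathological symplectomorphisms of $X_k$ do not descend to $Z$, which is precisely the remark already made in the excerpt that ``none of these Dehn twists can be constructed so as to descend to $Z$.''

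First I would set up the correspondence between symplectic data on $Z$ and symplectic data on $X_k$. The key structure is the collection $\Cc$ of $-2$-spheres that get contracted by $\Phi_J$ to the three singular points $\bp$. A symplectic form $\om$ on $Z$ should correspond to a symplectic form on $X_k$ for which the curves in $\Cc$ have a controlled (small, or zero) symplectic area, so that the blow-down/resolution is a well-defined symplectic operation. I would prove a lemma converting symplectomorphisms and isotopies of $(Z,\om)$ into symplectomorphisms and isotopies of $(X_k,\om_X)$ that preserve the configuration $\Cc$ (up to permutation of its components, as constrained by the different orders $2,3,\ell$ of the singular points, exactly the rigidity of the local models recorded in Lemma~\ref{le:phiZ}). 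Conversely, any symplectomorphism of $X_k$ fixing $\Cc$ and appropriately equivariant near it descends to $Z$. This translation reduces conditions (a) and (b) on $Z$ to the known uniqueness and deformation-implies-isotopy statements on $X_k$, with the extra bookkeeping that everything must respect the curves $\Cc$; the cohomological constraints governing the allowed classes are already pinned down by Lemma~\ref{le:T}, Lemma~\ref{le:H2}, and Corollary~\ref{cor:H2}.

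The main obstacle, and the genuine mathematical content, is condition (c): connectedness of $\Symp_H(Z,\om)$. Here I would argue that a symplectomorphism $\phi$ of $Z$ acting trivially on $H_*(Z;\Z)$ lifts to a symplectomorphism $\phi_X$ of $X_k$ preserving $\Cc$ and acting trivially on $H_*(X_k;\Z)$. The point is that the Seidel Dehn twists that disconnect $\Symp_H(X_k,\om')$ are supported near Lagrangian spheres in classes such as $E_i-E_j$, and such twists move or destroy the configuration $\Cc$ that defines the resolution; more precisely, a symplectomorphism that descends to $Z$ must fix each connected component of $\Cc$ setwise and be trivial (or the local $\Z_m$-flip) on its homology. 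I would then show that the subgroup of $\Symp_H(X_k,\om_X)$ that preserves $\Cc$ in this strong sense is connected. This is the step I expect to be hardest: one needs a careful analysis of the space of such configurations (realized as a space of tamed almost complex structures $J$ for which the curves in $\Cc$ are holomorphic), showing it is connected and that the stabilizer of a fixed configuration inside $\Symp_H$ is connected. The natural tool is a Gromov-type argument: the space $\Jj_{\Cc}$ of $\om_X$-tame almost complex structures admitting holomorphic representatives of the classes in $\Cc$ is connected and the relevant moduli/evaluation fibration has connected fibers, so that $\Symp_H$ acting on it gives the desired connectedness of the stabilizer.

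Finally I would assemble the pieces: conditions (a) and (b) follow from their $X_k$-counterparts via the resolution correspondence, and condition (c) follows from the connectedness of the configuration-preserving subgroup of $\Symp_H(X_k,\om_X)$ together with the fact, noted after Definition~\ref{def:rig}, that the local models near $\bp$ allow only the identity or the $(z_1,z_2)\mapsto(z_2,z_1)$ flip (Lemma~\ref{le:phiZ}), which is itself isotopic to the identity within the symplectomorphism group because it is realized by a linear $\SU(2)$ rotation. The delicate interplay is ensuring that the isotopies produced downstairs on $Z$ respect the orbifold (singular) structure at $\bp$; I would handle this using the equivariant Darboux normalization already fixed in the excerpt, so that all isotopies may be taken to be the identity, or the fixed flip, near each $p_m$ throughout.
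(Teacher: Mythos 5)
Your overall framework (pass to the resolution $X_k$ and only work with data that respects the contracted configuration $\Cc$) is indeed the paper's framework, and your plan for condition (b) essentially matches the paper's Lemma~\ref{le:rig}: pull back, apply a relative version of the deformation-to-isotopy theorem of \cite{Mdef}, inflating only along curves disjoint from $\Cc$. But two of your steps have genuine gaps. For condition (a), the uniqueness theorem on $X_k$ cannot simply be ``transferred'': given two cohomologous forms on $Z$, standard near $\bp$, their modifications $\Om_\eps',\Om_\eps''$ on $X_k$ are cohomologous symplectic forms, but the diffeomorphism supplied by \cite{Mdef} has no reason to preserve $\Cc$, let alone be the identity near it, so it does not descend to $Z$; your phrase ``extra bookkeeping'' is exactly the missing content. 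The paper's Proposition~\ref{prop:Zuniq} instead goes \emph{further down}: using Lemma~\ref{le:tech} it finds embedded $J$-holomorphic curves in classes $E_3,E_k$, performs Symington's symplectic blow-down of the two chains of curves, identifies the resulting spaces with $(\C P^2,\C P^1)$ by Gromov's theorem, and then lifts a symplectomorphism of $\C P^2$ (matched on the excised regions) back to a diffeomorphism of $X_k$ that \emph{is} the identity near $\Cc$ and hence descends to $Z$.

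For condition (c) your key step is circular. You propose to deduce connectedness of the $\Cc$-preserving subgroup of $\Symp_H(X_k,\Om)$ from connectedness of the space $\Jj_\Cc$ of tame $J$ making $\Cc$ holomorphic, via a ``moduli/evaluation fibration with connected fibers''. But the fiber of any such evaluation map is, up to homotopy, precisely the group of symplectomorphisms supported away from the configuration, i.e.\ $\Symp^c(Z\less\bp,\om)$ --- and its connectedness is the statement to be proved; nor can you run the orbit--stabilizer sequence in the other direction, since $\Symp_H(X_k,\Om)$ itself is disconnected for $k\ge5$ by Seidel. The paper has to work much harder here: in the toric range of $\ka$ (Lemma~\ref{le:toric}) it isotops $\phi$ to the identity near the three divisors $D_1,D_2,D_3$ (using holomorphic representatives of $E_3$, $E_k$ and $L-E_{14}$), shrinks its support by conjugating with toric rescalings into a product piece $P=(S^1\times[0,1])\times D^2$, and invokes Gromov's theorem for $S^2\times S^2$ to show $\Symp_0(P,\p P)$ is contractible; for general $\ka$ (Lemma~\ref{le:Zconn}) it reduces to the toric range via a fibration over the space of forms $\Ss\Ff(\ka)$ together with a taming inflation along the $J$-curve in class $E_k$, where Pinsonnault's result (Lemma~\ref{le:tech}(ii)) that the \emph{minimal} class $E_k$ is represented for \emph{all} tame $J$, not just generic ones, is indispensable. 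Neither ingredient appears in your sketch. Finally, a small correction: for condition (c) the flip $(z_1,z_2)\mapsto(z_2,z_1)$ never arises, since elements of $\Symp_H$ act trivially on homology and so by Lemma~\ref{le:phiZ} are isotopic to maps that are the identity near $\bp$; moreover your claim that the flip is isotopic to the identity via a linear rotation is wrong in the relevant sense --- it lies in $U(2)\less\SU(2)$ and conjugates the $\Z_m$-action to its inverse, so a linear path from it to the identity is not $\Z_m$-equivariant and induces no isotopy on the quotient.
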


We prepare for the proof by collecting together some useful technical results.  

\begin{figure}[htbp] 
   \centering
   \includegraphics[width=3in]{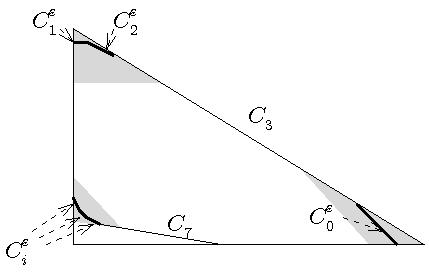} 
   \caption{The toric model for the curves $C_i$,
   where $0\le i<7, i\ne 3,$  in $\Cc$ when $\ell=4$, where
   we denote the edge representing $(C_i,\tau_\eps)$ 
    by $C_i^\eps$.  The moment image in $\De$ of the neighborhood $\Nn$ is shaded as in Figure \ref{fig:2}; 
   $\tau_\eps$ is the corresponding form on the small neighborhood $\Vv$ of $\Cc$.}
   \label{fig:6}
\end{figure}

The problem with the resolution $\Phi_J: (X_k,J)\to Z$
from the symplectic point of view  is that $\Phi_J$ is not a symplectomorphism; in particular, the pull back $\Phi_J^*(\om)$ of any  symplectic form on $Z$ is degenerate along $\Cc$.  However, we can deal with this as in \cite{M}, replacing $\Phi_J^*(\om)$ by a symplectic approximation as follows.  

By the local Darboux theorem explained 
 before Lemma \ref{le:phiZ}  every symplectic form on $Z$ 
 can be isotoped to be standard  in the neighborhoods 
$\Nn_m$ of the singular points.  Therefore, we shall 
only consider symplectic forms on $Z$ that are  standard  in 
$\Nn: = \cup\Nn_m$. 
Since the standard form is toric, we may identify the neighborhood of 
each singular point with a neighborhood of the appropriate vertex in the toric model $\De$ illustrated in Figure \ref{fig:2} for the case $\ell=4$.
The pullback of this standard form by $\Phi_J$ is 
degenerate along $\Cc$ but  is toric elsewhere in $\Vv$,
and clearly may be modified inside $\Vv$ to a form 
$$
\tau_\eps
$$
 that is toric and 
nondegenerate, and so that its (local) moment polytope is a neighborhood of the short edges $C_i^\eps$ in the approximation $\De_\eps$.   
Figure \ref{fig:6}  illustrates the case $\ell=4$.\footnote
{
In this figure we are illustrating a case in which $(Z,\om)$ has a global toric structure.  This does not hold for all $[\om_\ka]$;
but all that concerns us here is the local toric structure near the singular points, which always exists.}
  There is an analogous picture for $\ell=5$ with one extra short edge with conormal $(-1,-5)$.  The paper \cite{M} describes to how to construct such 
a symplectic approximation for any orbifold blow up of $\C P^2_{1,a,b}$.   In the language of that paper we are replacing the curves in $\Cc$ by the relevant parts of the inner and outer approximations to the ellipsoids $\la_1E(2,3)$ and  $\la_2E(1,\ell)$.  

Here are some useful properties of $\tau_\eps$:
\begin{itemize}
\item
As $\eps\to 0$, $\tau_\eps$ converges to $\Phi_J^*(\om)$ in $\Vv$.  \item The curves $C_i^\eps, C_{i+1}^\eps$ intersect $\tau_\eps$-orthogonally. (This is true for 
 the spheres corresponding to any pair of intersecting edges  of the moment polytope. For this is obviously true if the two edges 
lie along the coordinate axes through the origin. But, by the Delzant (smoothness) condition,
all vertices are affine equivalent to this one,
and affine transformations of the moment polytope lift to symplectomorphisms of the toric manifold.)
\item  We may recover $\om$ near $\bp$  from $\tau_\eps$ 
(and hence 
$\Phi_J^*(\om)$ near $\Cc$) by 
the blowing down process  described in Symington \cite{Sy}; see also \cite[Lemma~2.3]{M}.  This is a generalized symplectic summing process that first removes the curves $\Cc$ and then adds a suitable standard contractible open set.
\end{itemize}

In the following discussion we shall allow ourselves to decrease $\eps$ and shrink the sets $\Nn,\Vv$ as necessary.
Note also that {\it  this local toric model may be extended to include 
the $-1$ curves $C_3$ and $C_k$.}  
 Thus we shall assume that $\tau_\eps$ is nondegenerate on $C_3$ and $C_k$ and that these intersect $\Cc$ 
orthogonally with respect to the symplectic form. The symplectic neighborhod theorem then implies  that
\begin{itemize}
\item  $\tau_\eps$ is uniquely determined near $\Cc\cup(C_3\cup C_k)$ by its cohomology class.
\end{itemize}
Given $\eps>0$ we shall further assume that $\int_{C_3}\tau_\eps = 1-3\eps$, so that   $\tau_\eps$ integrates to $1$ over 
$\Phi^{-1}(D_1) = \cup_{i=0}^3C_i$.  However there is a choice for the size of $C_k$.  If it is necessary to emphasize this, we shall denote the form that integrates to $\la - (\ell-1)\eps$ over $C_k$ by 
$\tau_{\la,\eps}$. Then 
$$
\int_{\Phi^{-1}(D_2)} \tau_{\la,\eps}= \sum_{i=4}^k\int_{C_i}
\tau_{\la,\eps} = \la.
$$
Thus, in the notation of equation (\ref{eq:tau}),
$$
[\tau_{\la,\eps}] = 3a - (e_1+e_2+e_3) - \la(\sum_{i=4}^k e_k) + O(\eps).
$$
We shall also suppose that $0< \eps \ll \la\le 1$.
More precisely, we choose $\eps> 0$ so small that
\begin{equation}\labell{eq:min}
\mbox{{
the minimum of }} \int_E\tau_{\la,\eps}\;\;\mbox {{ for }} E\in \Ee(X_k)\;
\mbox {{ is assumed on the class }}
E_k.
\end{equation}
 (That this is possible
can be directly checked using the description of $\Ee(X_k)$ given in
equation (\ref{eq:Ek}).)  
 
\begin{defn} We 
denote by $\tau_{\eps,\la}$ (simplified to $\tau_\eps$) the toric symplectic form on $\Vv \cup {\rm nbhd}\,(C_3\cup C_k)$ described above.
Fix a $\tau_\eps$ compatible complex structure $J_\Vv$ on $\Vv$ for which the curves in $\Cc$ are holomorphic, and  let $\Om$ be any symplectic form on $X_k$ that equals $\tau_\eps$ in $\Vv$. 
Then
we define
\begin{itemize}
\item $\Jj_\Vv(\Om)$ to be the set of  $\Om$-tame almost complex structures that equal $J_\Vv$ near $\Cc$.

\item $\Jj_{\Vv, reg} (\Om)$ to be the subset of $J\in \Jj_\Vv(\Om)$
for which every $J$-holomorphic curve $u: S^2\to X_k$ whose image 
intersects $X_k\less \Vv$ has $c_1(u_*[S^2]) > 0$.
\end{itemize}

\NI Further, we say that a class $A\in H_2(X_k)$ is 
{\bf smoothly $J$-representable}
if it has a 
$J$-holomorphic and smoothly embedded representative.
\end{defn}

Here is the key technical lemma.

\begin{lemma}\labell{le:tech} 
{\rm (i)} The subset $\Jj_{\Vv, reg} (\Om)$ has second category in  $\Jj_\Vv(\Om)$ and  is path connected.  For every 
$J\in \Jj_{\Vv,reg}(\Om)$ the class $E_3$ 
is smoothly $J$-representable.
\SSS

\NI {\rm (ii)} {\rm [Pinsonnault]} Suppose that  $\Om|_{\Cc\cup C_3\cup C_k} = \tau_{\la,\eps}$ for some $\la \le 1$.
Then, for every  $J\in \Jj_\Vv(\Om)$ the class $E_k$ 
is smoothly $J$-representable.
\end{lemma}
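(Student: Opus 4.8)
The plan is to prove the two statements by the now-standard genericity/Gromov-compactness machinery for $J$-holomorphic spheres, exploiting the fact that $J$ is pinned down to be the fixed integrable structure $J_\Vv$ near $\Cc$, so that the curves of $\Cc$ are always holomorphic and automatically appear as (components of) limits. First I would set up the dimension count: for a somewhere-injective $J$-holomorphic sphere $u$ in class $A$, the moduli space has expected dimension $2c_1(A)-2+4 = 2c_1(A)+2$, and the standard Sard--Smale argument shows that for generic $J$ the space of such curves is a manifold of this dimension, cut out transversally. The condition defining $\Jj_{\Vv,reg}(\Om)$ is exactly that curves meeting $X_k\less\Vv$ have $c_1>0$; I would show this is a second-category condition by ruling out, for generic $J\in\Jj_\Vv(\Om)$, the existence of somewhere-injective curves with $c_1\le 0$ whose image is not entirely contained in the region where $J=J_\Vv$. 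Transversality applies on $X_k\less\Vv$ where $J$ is free to vary; the only multiply-covered or $c_1\le 0$ curves are forced into $\Vv$, where they must be one of the fixed $-2$ spheres $C_i$ of $\Cc$. Path-connectedness of $\Jj_{\Vv,reg}(\Om)$ follows because its complement has codimension at least $2$ in the (contractible, hence connected) space $\Jj_\Vv(\Om)$: a generic path avoids the bad locus.

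For the representability of $E_3$ in part (i), I would argue that $E_3$ has $c_1(E_3)=-K\cdot E_3 = 1 > 0$ and $E_3\cdot E_3 = -1$, so its expected moduli dimension is $2\cdot 1 + 2 = 4$, i.e. a unique unparametrized curve through a generic point, and Gromov compactness gives a (cusp-)curve representative for every $J\in\Jj_{\Vv,reg}(\Om)$. The key point is to exclude bubbling into a reducible configuration: any limit decomposes $E_3 = \sum A_j$ into classes with $\om$-areas summing to that of $E_3$ and total $c_1$ equal to $1$. Components meeting $X_k\less\Vv$ have $c_1>0$ by the regularity hypothesis, while components inside $\Vv$ lie among the fixed holomorphic curves $\Cc$ (each with $c_1=0$) and $C_3,C_k$; a positivity-of-areas and positivity-of-intersections bookkeeping then forces the limit to be the single smooth embedded sphere in class $E_3$, using that $E_3$ has the smallest relevant area and that any genuine splitting would produce a component of negative $c_1$ meeting the exterior, contradicting the definition of $\Jj_{\Vv,reg}(\Om)$. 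Embeddedness follows from the adjunction formula, since $E_3^2=-1$ and $c_1(E_3)=1$ give genus $0$ with no double points.

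For part (ii) I would invoke Pinsonnault's method for the class $E_k$, where we no longer restrict $J$ to the regular subset but allow all of $\Jj_\Vv(\Om)$. The crucial input is the area-minimality condition~(\ref{eq:min}): $E_k$ is the class of smallest $\Om$-area among all of $\Ee(X_k)$. I would argue that $E_k$ is $J$-representable for \emph{every} $J\in\Jj_\Vv(\Om)$ (not merely generic ones) by a limiting/positivity argument: start with a generic $J_0\in\Jj_{\Vv,reg}(\Om)$ for which $E_k$ has an embedded representative, connect $J_0$ to the given $J$ by a path, and apply Gromov compactness along the path. Any limit cusp-curve in class $E_k$ splits into classes of strictly smaller $\Om$-area, each of which must still have nonnegative area and (being effective) represented by $J$-holomorphic curves; but~(\ref{eq:min}) asserts $E_k$ already has the minimal area in $\Ee(X_k)$, which blocks a splitting into a sum of exceptional classes and, together with the structure of $\Cc$, forces the limit to remain the single embedded $-1$ sphere. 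The main obstacle is precisely this last step—controlling the bubbling of the minimal-area exceptional class without any genericity on $J$—and here I would rely on the combination of the area bound~(\ref{eq:min}), positivity of intersections with the fixed curves $C_i$, and the explicit finite list~(\ref{eq:Ek}) of exceptional classes to enumerate and exclude all possible degenerate limits.
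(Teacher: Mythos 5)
For part (i) you follow the paper's route exactly: a Sard--Smale argument plus an index count to show $\Jj_{\Vv,reg}(\Om)$ has second category, the same count along generic one-parameter families for path-connectedness, and non-vanishing of the Gromov--Witten invariant of $E_3$ plus a $c_1$ count on components for representability. Two concrete points need repair, though. First, your dimension formula is wrong in a way that matters: the parametrized moduli space of spheres in class $A$ has index $4+2c_1(A)$ (as in the paper), so the unparametrized count is $2c_1(A)-2$, not $2c_1(A)+2$. The whole mechanism rests on this number being negative precisely when $c_1(A)\le 0$; with your formula a somewhere-injective curve with $c_1=0$ has nonnegative expected dimension, genericity excludes nothing, and the codimension-two claim you need for path-connectedness fails as well. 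Second, your assertion that ``any genuine splitting would produce a component of negative $c_1$ meeting the exterior'' is false: the definition of $\Jj_{\Vv,reg}(\Om)$ constrains nothing inside $\Vv$, so a limit such as $E_3=(2E_3-E_1)+(E_1-E_2)+(E_2-E_3)$, whose exterior component has $c_1=1$ and whose other components are the fixed $c_1=0$ spheres $C_1,C_2\in\Cc$, passes your $c_1$ test, has positive areas, and has nonnegative intersection with every curve of $\Cc$. It is excluded only because $2E_3-E_1$ is primitive with virtual genus $1+\frac12\bigl((2E_3-E_1)^2+K\cdot(2E_3-E_1)\bigr)=-2<0$, so adjunction forbids any irreducible representative; your ``areas plus intersections'' bookkeeping must be supplemented by this adjunction step. (To be fair, the paper's own two-line treatment of this point is equally elliptical.)

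For part (ii) your route genuinely differs from the paper's, which does no compactness analysis at all: it simply quotes Lemma 1.2 of Pinsonnault \cite{Pin} --- an exceptional class whose area is minimal among all classes of $\Ee$ has an embedded representative for \emph{every} tame $J$ --- and observes that the normalization (\ref{eq:min}) makes $E_k$ such a minimal class when $\la\le1$. You propose instead to reprove this by Gromov compactness along a path from a generic structure. That is legitimate in principle, but the step you yourself flag as ``the main obstacle'' is precisely the content of Pinsonnault's lemma, and the mechanism you offer does not close it: minimality of $\om$-area among the classes of $\Ee(X_k)$ does not by itself block degenerations, because the components of a limiting cusp curve need not be exceptional classes at all --- they can be multiples of the $-2$ classes of $\Cc$ or other rational classes, about which (\ref{eq:min}) says nothing. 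Converting area-minimality into a no-bubbling statement valid for \emph{all} tame $J$ requires the structure theory of simple curves in rational $4$-manifolds (adjunction together with the decomposition of effective classes), which is exactly what Pinsonnault's proof supplies; citing it, as the paper does, is both shorter and complete.
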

\begin{proof}  To prove (i), 
recall that the moduli space $\Mm(A,J)$ of $J$-holomorphic 
maps $u:S^2\to X_k$ 
in class $A$ has expected (real) dimension $4 + 2c_1(A)$ and that, if 
$\Mm(A,J)$ is nonempty and consists of regular curves,
 this  
must be $\ge 6$, the dimension of the reparametrization group. 
But, standard results  (cf. \cite[Ch~3]{MS}) imply that
for each $A$ there is a subset of $\Jj_\Vv(\Om)$ of second category consisting of $J$ for which every $A$-curve that intersects $X\less\Vv$ is regular.  Since the set of classes $A$ is  countable, such $J$ lie in the set  
we have called $\Jj_{\Vv, reg} (\Om)$.  
 This proves the first statement in (i).  To prove that $\Jj_{\Vv, reg} (\Om)$ is path connected, recall that  any two elements in
$\Jj_{\Vv, reg} (\Om)$ can be joined by a generic path  
consisting of elements $J_t$ for which the cokernel of the linearized Cauchy--Riemann operator $D_u$ has dimension at most $1$.  Therefore, 
if $\Mm(A,J_t)\ne \emptyset$, we have $4 + 2c_1(A)\ge 5$, which implies that $c_1(A)>0$.  Thus $J_t\in \Jj_{\Vv, reg} (\Om)$ for all $t$.  

Finally, since $E_3$ has nonzero Gromov--Witten invariant, it is  represented
by some $J$-holomorphic stable map for all $J\in \Jj_{\Vv} (\Om)$.  Let $B_1,\dots, B_m$ be the classes of its components.  Since $c_1(E_3) = 1$, if this stable map is not smooth at least one of these components must have $c_1(B_i)\le 0$. But this is impossible when $J\in \Jj_{\Vv, reg} (\Om)$.  This proves (i).

We prove 
statement (ii)  by using Lemma 1.2 in Pinsonnault \cite{Pin}, which states that 
a class $E\in \Ee_k$  whose symplectic area is minimal among all the classes in $\Ee_k$  has an smooth $J$-representative for {\it all} tame $J$.
This applies here since $E_k$ is such a minimal class by construction.
 \end{proof}

\subsection{The rigidity of $Z$.} 

\begin{lemma} \labell{le:rig} $Z$ satisfies condition (b) in the definition of rigidity.
\end{lemma}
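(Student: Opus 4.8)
The plan is to transport the problem to the resolution $X_k$, where condition (b) is known by \cite{Mdef}, and then to blow down. Throughout we keep the standing convention that every symplectic form in sight is standard in the neighborhood $\Nn$ of the singular set $\bp$. Given a path $\om_t$, $t\in[0,1]$, with $[\om_0]=[\om_1]$ and both endpoints standard near $\bp$, a parametrized Moser argument near each singular point (using that $\om_0,\om_1$ are already standard there, so that the correcting isotopies are trivial at $t=0,1$) lets me homotope the path rel endpoints so that every $\om_t$ is standard near $\bp$.

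Next I would lift the path to $X_k$. Applying the symplectic approximation of \S\ref{ss:Z} to the whole family produces a path $\Om_t$ of genuine symplectic forms on $X_k$, each equal to the fixed toric model $\tau_\eps$ on a neighborhood $\Vv$ of $\Cc$ and equal to $\Phi_J^*(\om_t)$ outside $\Vv$. Because the areas assigned to the curves of $\Cc$ are fixed---they depend only on $\eps$ together with the restrictions $[\om_t]|_{D_1}$ and $[\om_t]|_{D_2}$---the hypothesis $[\om_0]=[\om_1]$ forces $[\Om_0]=[\Om_1]$.

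I would then apply condition (b) for $X_k$ in the form relative to $\Cc$: the path $\Om_t$ is homotoped rel endpoints, through symplectic forms all equal to $\tau_\eps$ near $\Cc$, to a path $\Om_t'$ with $[\Om_t']$ constant. Blowing down this family fibrewise in $t$ by the Symington process (as before Lemma~\ref{le:phiZ}) recovers a homotopy rel endpoints of the original path $\om_t$ to a path $\om_t'$ with $[\om_t']$ constant; since blow-down inverts the lift on forms standard near $\bp$, the endpoints are exactly $\om_0$ and $\om_1$. This is precisely condition (b) for $Z$.

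The crux is the relative form of the $X_k$ result---ensuring the deformation-to-isotopy homotopy can be taken to preserve $\tau_\eps$ near $\Cc$ so that it descends. Here I would use that the inflation underlying \cite{Mdef} changes $[\Om_t]$ only by adding nonnegative multiples of Poincar\'e duals of $\Om_t$-symplectic curves, and that these curves may be chosen inside $X_k\less\Vv$, which is identified with $Z\less\Nn$: the classes $E_3$ and $E_k$ are smoothly representable by curves meeting $\Vv$ only in the standard model, by Lemma~\ref{le:tech}, while the $(-2)$-configuration $\Cc$ itself is held fixed by $J_\Vv$. Carrying out all inflation in the complement of $\Vv$ keeps each form equal to $\tau_\eps$ on $\Vv$, so the configuration $\Cc$ stays the standard symplectic model and the blow-down is defined at every stage of the homotopy.
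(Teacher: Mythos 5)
Your skeleton coincides with the paper's: make the path standard near $\bp$, lift it to $X_k$ keeping the forms equal to $\tau_\eps$ near $\Cc$, invoke a relative \lq\lq deformation implies isotopy" statement on $X_k$, and blow back down. The gap is in the step you yourself flag as the crux. For the homotopy to descend to $Z$ rel endpoints, every inflation must be performed along curves \emph{disjoint} from $\Cc$, i.e.\ in classes having zero intersection number with $L-E_{123}$ and with all $E_i-E_{i+1}$: inflating along $S$ adds a nonnegative multiple of $PD([S])$, so if $[S]$ pairs nontrivially with a class in $\Cc$ the areas of the $-2$-curves change, the forms no longer equal $\tau_\eps$ on $\Vv$, and the path ceases to consist of lifts of forms on $Z$ (recall also that the blown-down class is not read off from $E_3,E_k$ alone: $\int_{D_1}$ of the blown-down form is the total $\tau_\eps$-area of $C_0\cup C_1\cup C_2\cup C_3$). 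The classes you name, $E_3$ and $E_k$, fail this requirement exactly: $E_3\cdot(L-E_{123})=E_3\cdot(E_2-E_3)=1$ and $E_k\cdot(E_{k-1}-E_k)=1$, so their representatives necessarily cross $\Vv$ and meet $\Cc$ --- your own phrase \lq\lq meeting $\Vv$ only in the standard model" concedes they are not contained in $X_k\less\Vv$. Moreover both classes have square $-1$, so Li--Usher inflation along them can only \emph{decrease} their own areas; nothing in your argument can move the cohomology class in the opposite direction, which is unavoidable since $\int_{E_k}\Om_t$ may dip below as well as rise above its common endpoint value.

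What is actually needed, and what the paper supplies, is: (1) a preliminary rescaling of the path so that $\int_{D_1}\om_t\equiv 1$, which reduces the cohomological variation of the lifted path to the single parameter $\int_{E_k}\Om_t$; (2) a way to decrease that parameter --- inflation \lq\lq essentially" along $C_k$, justified by the Li--Usher interpretation of inflation as a symplectic sum along a negative-square curve; and (3), the real content of the lemma, a way to \emph{increase} it: a class $A_k$ ($A_7$, $A_8$ in the paper's proof) which is orthogonal to every class in $\Cc$, has positive square and $d(A_k)=6$, hence nonzero Gromov--Witten invariant and a smooth $J$-holomorphic representative for generic $J$ keeping $\Cc$ holomorphic, and whose Poincar\'e dual is a positive multiple of a limiting class $[\om_\mu]$ with $\mu>0$, so that inflation along it raises $\ka$ toward $0$. (This is also why the paper proves the statement only for $-\ell<\ka\le 0$ and obtains the remaining values by symmetry.) Your proposal contains no substitute for (1)--(3): as written, the relative deformation-to-isotopy step has no admissible curves to inflate along, so the argument does not close.
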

\begin{proof}
We shall prove this for the  reduced spaces $(Z,\om_\ka)$ 
where $-\ell< \ka \le 0$.  The case $0< \ka <\ell$ follows by symmetry; cf. Remark~\ref{rmk:ZT}.   

Suppose we are given two cohomologous symplectic forms $\om_0, \om_1$  on $Z$ that are connected by a deformation $\om_t$. First multiply each $\om_t$ by a suitable constant so its integral over the divisor 
$D_1=\C P^1_{2,3}$ is constant and equal to $1$.  Next, use
 a parametrized version of the local Darboux theorem of \S2 to
ensure  that
each of these forms is standard in some neighborhood of the singular set $\bp$.  Then the pullback family $\Phi_J^*(\om_t)$ on $X_k$ is 
constant on the neighborhood $\Vv$.
We claim that by a relative version of the \lq\lq deformation implies isotopy" result from \cite{Mdef}
we can
 homotop the deformation
 $\Phi_J^*(\om_t)$  in $X_k$ to an isotopy, keeping the endpoints fixed and also not changing the forms near $\Cc$.   Once this is done, we can push forward the resulting isotopy by $\Phi_J$ to an isotopy in $Z$.
 
 To establish the claim, several remarks are in order.  
 \SSS

 \NI$\bullet$  Since we are keeping the forms
 fixed in $\Vv$ it does not matter that they are not symplectic 
 along $\Cc$. (Alternatively, we can can change them in $\Vv$ to equal $\tau_\eps$.)\SSS
  
 \NI$\bullet$  One changes a deformation to an isotopy by inflating along certain 
 symplectically embedded curves $S$.  
 If these curves do not intersect $\Cc$ then this inflation process will not change the forms near $\Cc$. 
 The general inflation process is described in \cite{Mdef}; some special cases are described in \cite{M}.
 \SSS
 
 \NI$\bullet$   We can insure that the
 curves $S$ do not intersect $\Cc$ by choosing them to lie in classes
in $H_2(X_k\less \Cc)$ and also to be $J$-holomorphic for some $J$ 
for which the curves in $\Cc$ are holomorphic.
\SSS

The above remarks apply to the general relative 
inflation process. 
In fact in our case
 $H_2(X_k,\Cc)$ is generated by the classes $E_3$ and $E_k$ that project to the divisors $D_1,D_2$, and we have already arranged that the forms 
 $\Phi_J^*(\om_t)$ have the same integral over $E_3$.  Hence we only need to worry about the size of $E_k$.  If this is too big, it is easy to decrease it, essentially by inflating along the representative $C_k$ of $E_k$ itself.
 (As pointed out by Li--Usher \cite{LU}, one can also interpret inflation as a symplectic connect sum, and hence can inflate along
curves of negative self intersection.)
   If it is too small and $\ka\le0$ we can increase it to 
   $\ka$  by inflating along a curve in the class 
    $A_k$ where
$$
A_7: = 5(L-E_{123}) -6E_{4\dots k},\quad 
A_8: = 11(L-E_{123}) -12 E_{4\dots k}.
$$
Then 
$$
A_k\cdot A_k > 0, \quad d(A_7) = {\ts \frac 12}(A\cdot A + c_1(A)) = 6,
\quad d(A_8) = 6.
$$
These inequalities imply that the Gromov--Witten invariant $Gr(A_k)$ that counts embedded holomorphic curves through $d(A_k)$ points is nonzero,
so that these classes  have smooth  $J$-representatives for generic $J$.   Moreover,   
because $PD(A_k)$ is a multiple of $[\om_\mu]$ for some $\mu>0$, $A_k$ 
 can be used to change the cohomology class of $\om_\ka$ by increasing $\ka$ to any number $< \mu$.  In particular, we can increase $\ka$ to $0$.
 \end{proof}

\begin{prop}\labell{prop:Zuniq} Any two cohomologous symplectic forms on $Z$ are diffeomorphic.
\end{prop}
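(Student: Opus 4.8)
The strategy is to combine the \lq\lq deformation implies isotopy\rq\rq\ property already proved in Lemma~\ref{le:rig} (condition (b) of Definition~\ref{def:rig}) with a deformation-equivalence statement, carried out on the resolution $\Phi_J\colon (X_k,J)\to Z$. Given two cohomologous symplectic forms $\om_0,\om_1$ on $Z$, I would first use the parametrized local Darboux theorem, exactly as in the proof of Lemma~\ref{le:rig}, to make each $\om_i$ standard near the singular set $\bp$; this is an isotopy and so does not affect the conclusion. Pulling back by $\Phi_J$ and modifying near $\Cc$ then produces cohomologous symplectic forms $\Om_0,\Om_1$ on $X_k$ that both coincide with $\tau_\eps$ on a fixed neighbourhood of $\Cc\cup C_3\cup C_k$. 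Since such a neighbourhood is held fixed, any deformation from $\Om_0$ to $\Om_1$ through symplectic forms that are \emph{not altered near} $\Cc$ pushes forward under $\Phi_J$ to a deformation from $\om_0$ to $\om_1$ on $Z$. Thus the whole problem reduces to connecting $\Om_0$ and $\Om_1$ by such a relative deformation.

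Granting such a deformation, the endgame is routine: because its endpoints are cohomologous, the relative form of the \lq\lq deformation implies isotopy\rq\rq\ theorem of \cite{Mdef} underlying Lemma~\ref{le:rig} homotopes it (rel $\Cc$, fixed endpoints) to an isotopy, and Moser's argument yields a diffeomorphism $\Psi$ of $X_k$ equal to the identity near $\Cc$ with $\Psi^*\Om_1=\Om_0$. As $\Psi$ fixes the local linear models at the centres of the chains in $\Cc$, it descends through $\Phi_J$ to a diffeomorphism of $Z$ carrying $\om_0$ to $\om_1$, which is what is wanted.

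To produce the relative deformation I would invoke the structure theory of the rational surface $X_k$: by \cite{LM,Mdef} any two cohomologous symplectic forms on $X_k$ are deformation equivalent, and I would realize this deformation rel $\Cc$ using the $J$-holomorphic curve technology of Lemma~\ref{le:tech}. Concretely, for generic $J_i\in\Jj_{\Vv,reg}(\Om_i)$ the exceptional classes --- in particular $E_3$ and $E_k$, and then the full exceptional configuration --- have embedded holomorphic representatives, and one connects $\Om_0$ to $\Om_1$ by the same inflation procedure used in Lemma~\ref{le:rig}, inflating only along curves lying in classes of $H_2(X_k\less\Cc)$ that are holomorphic for a $J$ making $\Cc$ holomorphic (for instance $E_3$, $E_k$, and the classes $A_k$). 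Because the inflation curves are disjoint from $\Cc$, the forms are never changed there, so the resulting deformation is rel $\Cc$.

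The main obstacle is precisely this \emph{relative} control: the chains of $-2$-spheres $\Cc$ must be treated as rigid holomorphic data, and every curve used to connect or adjust the two forms must be kept disjoint from them. This is exactly what the positivity condition $c_1>0$ built into $\Jj_{\Vv,reg}(\Om)$ and the representability results of Lemma~\ref{le:tech} are designed to guarantee; the delicate point is to check that a generic compatible $J$ places the exceptional configuration in good position transverse to the holomorphic chains $\Cc$, and that the inflation classes $A_k$ remain available throughout, so that the whole deformation genuinely descends through $\Phi_J$ to a deformation on $Z$.
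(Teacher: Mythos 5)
Your overall frame matches the paper's: work on the resolution, normalize both forms to $\tau_\eps$ near $\Cc\cup C_3\cup C_k$, and finish with the relative deformation-to-isotopy/inflation argument of Lemma~\ref{le:rig} followed by descent through $\Phi_J$. The endgame is fine. But there is a genuine gap at the step you yourself flag as "the main obstacle": you never actually produce the relative deformation connecting $\Om_0$ to $\Om_1$. The results you cite from \cite{LM,Mdef} give deformation equivalence (indeed a diffeomorphism) on $X_k$ with \emph{no} control near $\Cc$, and inflation cannot repair this: inflation modifies the cohomology class of a \emph{single given} form along a path of forms that already exists; it does not interpolate between two forms that are not yet joined by any path. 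In Lemma~\ref{le:rig} the path $\Phi_J^*(\om_t)$ is handed to you by hypothesis, and inflation only corrects its cohomology classes; in Proposition~\ref{prop:Zuniq} no path is given, and constructing one rel $\Cc$ is precisely the content of the proposition. So your reduction is correct but your mechanism for carrying it out is circular.

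The paper supplies the missing idea by going \emph{down} rather than sideways: having arranged (via Lemma~\ref{le:tech} and a local isotopy) that the $J'$- and $J''$-holomorphic representatives of $E_3$ and $E_k$ agree with $C_3, C_k$ near $\Cc$, it applies Symington's symplectic blow-down \cite{Sy} to the two chains $\Cc_1'=C_1'\cup C_2'\cup C_3'$ and $\Cc_2'=C_4'\cup\dots\cup C_k'$ (note: these include $C_3'$ and $C_k'$, not just the $-2$-curves), replacing them by closed regions $\Rr_1,\Rr_2$. The resulting space $Y'$ has $H_2(Y')=\Z$ generated by an embedded symplectic sphere, so Gromov's theorem \cite[Ch.~9.4]{MS} identifies $(Y',\om_Y')$ and likewise $(Y'',\om_Y'')$ with the standard $(\C P^2,\C P^1)$. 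One then isotops the resulting symplectomorphism $Y'\to Y''$ to match the small excised regions $\Rr_{j,\de}$, lifts it to a symplectomorphism $(X_k,\Om_\de')\to(X_k,\Om_\de'')$ equal to the identity near $\Cc$, and only \emph{then} is one in a position to invoke the deformation (within the class $\Ss'$ of forms toric near $\Cc\cup C_3'\cup C_k'$) and the inflation procedure of Lemma~\ref{le:rig} to upgrade to an isotopy. In short: the uniqueness of the symplectic structure on $\C P^2$, imported through the blow-down, is what creates the relative deformation; without that step your argument assumes what it needs to prove.
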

\begin{proof}   The  argument below basically shows that any symplectic form $\om$
on $Z$ is the blow up of a form $\rho$ on $\C P^2$, so that the result  follows from the uniqueness of 
symplectic structures on $\C P^2$. However, it is easiest to explain
the details if we start with two symplectic forms $\om', \om''$ on $Z$.
We assume as we may that these agree on the neighborhood $\Nn$ of the singular points $\bp$.

We shall work on the resolution $X_k$.
Denote by $\Om_\eps'$ and $\Om_\eps''$ 
the symplectic forms on $X_k$ obtained from $\Phi_J^*(\om')$
and $\Phi_J^*(\om'')$  by changing them in $\Vv$ to equal $\tau_\eps$.
We will show that $\Om_\eps'$ is diffeomorphic to $\Om_\eps''$, by a diffeomorphism $\phi$ that equals the identity in a neighborhood $\Vv_1\subset \Vv$ of $\Cc$.  
Since  we may recover $\om', \om''$ from $\Om_\eps', \Om_\eps''$
by the same symplectic blow down process near  $\Cc$, we may extend this diffeomorphism by the identity to get the desired diffeomorphism of $Z$.

To construct this diffeomorphism of $X_k$, choose $J'\in 
\Jj_{\Vv, reg} (\Om_\eps')$, and consider the
 corresponding $J'$-holomorphic spheres
$C_3'$ and $C_k'$ in classes $E_3$ and $E_k$ (which exist by 
Lemma~\ref{le:tech}.)  Our first aim is to extend the local toric model to include these curves $C_3', C_k'$.
 By positivity of intersections, these must  each intersect $\Cc$ transversally. 
In fact, because $C_3'\cdot C_0=1$, 
$C_3'$ meets $C_0$  transversally at a single point $q_0'$.  Similarly,
$C_3'$ meets $C_2$   at   $q_2'$, $C_k'$ meets $C_{k-1}$ at $q_k'$ 
and there are no other intersections.   Let $q_2,q_0,q_k$ denote the corresponding points of intersection of $C_3\cup C_k$ with $\Cc$.
We now claim that 
there is an $\Om'_\eps$-symplectic isotopy $g_t, t\in [0,1],$
supported near $\Cc$  such that
$
g_0=id,$ $g_t(\Cc) = \Cc$ for all $t$,  and so that
$g_1(C_3')$ and $g_1(C_k')$  coincide with $C_3$ and $C_k$ near $\Cc$. 
 To achieve this, we first isotop $C_3'$ and $C_k'$ so that they meet
 $\Cc$ at the correct points, then straighten them out so that the intersection is orthogonal,\footnote
{
This straightening technique is well known: see for example
 the proof of Theorem 9.4.7 (ii) in \cite{MS}.} and finally isotop them to coincide with $C_3, C_k$ near the points $q_i$.

Therefore we may assume that the $J'$-holomorphic spheres
$C_3'$ and $C_k'$  are such that
 $C_3'=C_3$ and $C_k'=C_k$ near their intersections with $\Cc$. We denote by $\Ss'$ the set of symplectic forms on $X_k$ that equal some toric form near $\Cc\cup C_3'\cup C_k'$.
 Similarly,
 we may choose $J''\in \Jj_{\Vv, reg} (\Om_\eps'')$ such that the corresponding spheres $C_3'', C_k''$ equal $C_3, C_k$ near $\Cc$.
 For the other $i$ we define $C_i': = C_i = :C_i''$.
 
 Now consider the set  of curves $\cup_{i=1}^kC_i'$, i.e. all the curves except for $C_0$ in class $L-E_{123}$.  This set has two 
 connected components $\Cc_1': = \cup_{i=1}^3C_i'$ and $\Cc_2': = \cup_{i=4}^kC_i'$ both with local toric models. 
 The symplectic blow down process as described in Symington \cite{Sy}
 removes this set of curves, inserting their stead two closed regions
 $\Rr_1, \Rr_2$ whose boundaries collapse to  $\Cc_1',\Cc_2'$ under the characteristic flow; see Figure \ref{fig:new}.   Denote this symplectic blow down by 
 $(Y', \om_Y')$.   It contains (embedded copies of) the regions $\Rr_j, j=1,2,$ and one can get back to $X_k$ by cutting  out the interior of these regions and collapsing their boundaries.
 
  \begin{figure}[htbp] 
    \centering
    \includegraphics[width=3in]{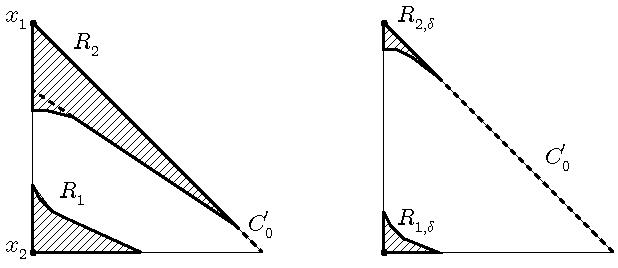} 
    \caption{The regions $\Rr_j$ that we blow down to get $Y$, in their original form on the left and made small on the right.}
    \label{fig:new}
 \end{figure}

 Let  $x_1',x_2'$  be the points in $Y'$ corresponding to 
 the vertices $x_1,x_2$ of the simplex in Figure \ref{fig:new}.  
 Notice that by varying the size of the curves $C_i', i\ge 1,$ (i.e.
 by moving $\Om'_\eps$ along a path in $\Ss'$)  we can make the regions $\Rr_j$ arbitrarily small.  (This is not a question of making $\eps$ smaller, since that  decreases $C_0$, but rather 
 of decreasing the size of all curves in $\Cc_1'\cup \Cc_2'$, increasing $C_0$ correspondingly.)  Thus given any neighborhoods of $x_1',x_2'$ in $Y'$ we can 
 construct a symplectic form $\Om_\de'$ on $X_k$ that lies in $\Ss'$ by removing suitably small copies $\Rr_{j,\de}$ of $\Rr_j, j=1,2$ in these neighborhoods.
 Clearly, such a form $\Om_\de'$ can be deformed to the original form $\Om_\eps'$ within $\Ss'$.

Next observe that $H_2(Y') = \Z$, with generator represented by a symplectically embedded $2$-sphere $S_0'$ through $q'$,
 the image of $C_0'$.
 Therefore by Gromov's well known theorem (cf. \cite[Ch.~ 9.4]{MS}), $(Y', S',\om_Y')$ is symplectomorphic to $(\C P^2, \C P^1)$ with its standard form.
 Applying a similar argument to the curves $\cup_{i=1}^kC_i''$, we get another copy $(Y'', \om_Y'')$ of $\C P^2$, that also contains embedded copies of $\Rr_j, j=1,2.$  It 
 follows that there is a symplectomorphism  
 $$
\psi:  (Y', S_0', x_1', x_2',\om_Y')\;\;\to\;\; (Y'', S_0'',x_1'',x_2'',\om_Y'').
 $$
 We can isotop $\psi$ so that, for sufficiently small $\de$ and for $j=1,2$, it takes the copy of $\Rr_{j,\de}$ in $Y'$ to that in $Y''$. Then $\psi$ lifts to a symplectomorphism $(X_k,\Om_\de')\to (X_k,\Om_\de'')$ on the blow up.  Moreover, 
it can be chosen to be the identity near $\Cc$ since the curves $C_3',C_3''$ and $C_k', C_k''$ coincide near $\Cc$.   Hence $\Om_\eps'$ is deformation equivalent to $\psi^*(\Om_\eps'')$ by a deformation in $\Ss'$.  One now changes this deformation to an isotopy 
in $\Ss'$ by the inflation procedure as described in 
Lemma \ref{le:rig}.
\end{proof}

\begin{rmk}\rm  If in the above proof we only had to deal with the singularity at $p_\ell\in Z$ (which is resolved by the curves in $\Cc_1'$) then we
could perform an orbifold blow down directly from $Z$, with no need to pass to $X_k$.  However, the other two singularities at $p_2,p_3$ do not have such a direct blow down, and we must first blow up to $X_k$ before passing to the blow down.
\end{rmk}

To prove that $Z$ satisfies condition (c), we first consider the case of the reduced space $Z$ at level $\ka \in (-\ell,2(3-\ell)).$
We saw at the beginning of \S\ref{s:3} that in this case
 $(Z,\om_\ka)$ has a toric structure with moment polytope as pictured in Fig \ref{fig:2}.    
\MS

\begin{lemma}\labell{le:toric} If  $(Z,\om_\ka)$ is toric then
the group $\Symp_H(Z,\om_\ka)$ is connected. 
\end{lemma}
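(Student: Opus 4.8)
The plan is to reduce the statement, via the resolution $\Phi_J:(X_k,\Om_\eps)\to Z$, to the connectedness of the group of symplectomorphisms of $X_k$ fixing the exceptional configuration, and then to settle that by a Gromov-style argument resting on the uniqueness of symplectic forms on $\C P^2$. The toric hypothesis will be used to guarantee that $\Cc\cup C_3\cup C_k$ is the full toric divisor set, so that its complement is a ball. Concretely: given $\phi\in\Symp_H(Z,\om_\ka)$, recall $\phi$ fixes each $p_m$, the orders being distinct. A local coordinate swap at $p_3$ or $p_\ell$ would interchange the two toric divisors meeting there and so act nontrivially on $H_2(Z;\Z)\cong\Z^2$; as $\phi$ is homologically trivial, Lemma~\ref{le:phiZ}(iii) lets us take the local models to be the identity, after which a parametrized Darboux--Moser argument isotopes $\phi$ within $\Symp_H$ to a map equal to the identity on $\Nn$. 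Such a $\phi$ lifts to a homologically trivial symplectomorphism $\Tilde\phi$ of $(X_k,\Om_\eps)$ that is the identity near $\Cc$, and any null-isotopy of $\Tilde\phi$ supported away from $\Cc$ descends under $\Phi_J$; so it suffices to show that the group $G_\Cc$ of homologically trivial symplectomorphisms of $(X_k,\Om_\eps)$ fixing a neighborhood of $\Cc$ is connected. This is where the hypothesis bites: the Dehn twists in Lagrangian spheres in the $(-2)$-classes $E_i-E_j$ that disconnect $\Symp_H(X_k)$ for $k\ge5$ are supported near the resolving chains $\Cc$ and so do not lie in $G_\Cc$, matching the Remark that they fail to descend to $Z$.

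Since $G_\Cc$ already fixes $\Cc$, the only data that varies is the pair of $(-1)$-spheres $C_3,C_k$ in classes $E_3,E_k$. By Lemma~\ref{le:tech} embedded representatives exist for every $J\in\Jj_{\Vv,reg}(\Om_\eps)$, and by positivity of intersections they meet $\Cc$ transversally at the prescribed points. As $\Jj_{\Vv,reg}(\Om_\eps)$ is path connected and each of $E_3,E_k$ has a unique representative (its unparametrized moduli space being a single regular point), the space of such configurations is connected, and $G_\Cc$ acts on it transitively by symplectic isotopy extension, through isotopies supported away from $\Cc$. Hence $G_\Cc$ is connected provided the subgroup $G_0$ fixing all of $\Cc\cup C_3\cup C_k$ near itself is connected.

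To treat $G_0$ I would blow down $\bigcup_{i=1}^kC_i$ as in the proof of Proposition~\ref{prop:Zuniq}, obtaining $(\C P^2,\text{std})$ together with the line $S_0=\Phi_J(C_0)$ and two small regions $\Rr_1,\Rr_2$ replacing the components $\Cc_1,\Cc_2$. An element of $G_0$ descends to a symplectomorphism of $\C P^2$ fixing neighborhoods of $S_0$ and of $\Rr_1\cup\Rr_2$, and conversely every such map lifts; the complement of this data is ball-like, so $G_0$ reduces to the symplectomorphism group of a standard ball rel boundary, connected by Gromov's contractibility of $\Symp_c(\R^4)$ --- equivalently one may use $\Symp(\C P^2)\simeq PU(3)$ together with connectedness of the stabilizer of a line and of the ball-embedding spaces of \cite{M}. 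Feeding the two preceding steps into the fibration of $G_\Cc$ over the configuration space with fiber $G_0$ then shows $G_\Cc$, and hence $\Symp_H(Z,\om_\ka)$, is connected.

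The main obstacle is this last step: one must verify that, after contracting the exceptional chains, the blown-down data $(S_0;\Rr_1,\Rr_2)$ subject to the constraint of fixing $\Cc$ really does reduce $G_0$ to the symplectomorphism group of a single ball-like region, or to a manifestly connected stabilizer inside $PU(3)$. This rests on controlling every $J$-holomorphic curve that could obstruct the blow-down --- precisely the role of the regularity condition defining $\Jj_{\Vv,reg}(\Om_\eps)$ in Lemma~\ref{le:tech} --- and on checking that the ambiguity in the embedded representatives of $E_3,E_k$ and of the line $S_0$ is connected.
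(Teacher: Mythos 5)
Your opening moves coincide with the paper's (use Lemma~\ref{le:phiZ} to make $\phi$ the identity near $\bp$, lift to $X_k$, use Lemma~\ref{le:tech} and isotopy extension to normalize the spheres in classes $E_3,E_k$), but from there your argument has two genuine gaps. The first is the reduction to connectedness of $G_\Cc$ and then, via the fibration over the configuration space of pairs $(C_3,C_k)$, to connectedness of $G_0$. The fiber of that fibration is the subgroup of $G_\Cc$ preserving $C_3$ and $C_k$ \emph{setwise}, not $G_0$, and the passage from the setwise stabilizer to maps that are the identity near $C_3\cup C_k$ cannot be carried out inside $G_\Cc$: since $C_3$ meets $\Cc$ in two points (one on $C_0$, one on $C_2$), an element of the setwise stabilizer induces on the annulus $C_3\less\Cc$ an area-preserving map rel ends whose isotopy class may be a nontrivial Dehn twist, and such a twist cannot be undone by an isotopy that is the identity near $\Cc$. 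The paper confronts exactly this point and resolves it by composing with a rotation drawn from the local torus action around the singular point $p_2$ --- a map which is \emph{not} the identity near $C_0$, hence not in $G_\Cc$, but which still descends to $Z$. So your framework, which insists on staying inside $G_\Cc$, reduces the lemma to a statement (connectedness of $G_\Cc$) that is unproven and quite possibly false; what the paper proves is connectedness through the larger class of lifts that descend to $Z$, which includes these local rotations. A related factual slip: $\Cc\cup C_3\cup C_k$ is \emph{not} the full toric divisor set of $X_k$ (there are two further toric divisors, in classes $L-E_{14}$ and $L-E_{4\dots k}$), and its complement is not a ball.

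The second gap is the last step, which you flag yourself but which is not a removable technicality. After blowing down $\cup_{i\ge 1}C_i$, an element of $G_0$ descends to a compactly supported symplectomorphism of a ball that is in addition the identity near the regions $\Rr_1,\Rr_2$; this group is not $\Symp_c(\R^4)$. The orbit--stabilizer sequence reads $\pi_1(\Symp_c(B))\to\pi_1(\mathcal{E})\to\pi_0(\mathrm{Stab})\to\pi_0(\Symp_c(B))=0$, where $\mathcal{E}$ is the space of symplectic embeddings of $\Rr_1\sqcup\Rr_2$ subject to the incidence conditions with $S_0$, so connectedness of the stabilizer requires every \emph{loop} of such embeddings to extend to a loop of ambient symplectomorphisms, i.e.\ a $\pi_1$-statement about embedding spaces of these orbifold blow-up regions. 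Gromov's theorem and the results of \cite{M} invoked in the paper are $\pi_0$-statements and give no control of this. The paper's proof is structured precisely so that no quotient by an embedding space ever occurs: after your Step~1 it uses the toric hypothesis to produce a \emph{third} invariant divisor $D_3$ (pulling back to the curve $C'$ in class $L-E_{14}$), makes $\phi$ the identity near $D_3$ as well, then exploits the global Darboux chart on $Z\less(D_1\cup D_3)$ to shrink the support of $\phi$ by conjugating with rescalings, landing in a product $\bigl(S^1\times[0,1]\bigr)\times D^2$ whose rel-boundary symplectomorphism group is contractible by Gromov's theorem on $S^2\times S^2$ with factors of equal area. If you want to salvage your blow-down route, you would need to prove the missing $\pi_1$-extension property for embeddings of $\Rr_1,\Rr_2$, which is likely harder than the lemma itself.
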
  
\begin{proof} Choose $\la = 6(\ell+\ka)/\ell(6+\ka)$, the ratio of the integral of $\om_\ka$ over $D_2$ to its integral over $D_1$; cf. equation (\ref{eq:omZ}).
Since $(Z,\om_\ka)$ is toric, the local toric form $\tau_{\la,\eps}$
extends to a global toric form $\Om$ on $X_k$ which we may assume to equal
a multiple of $\om_\ka$ outside $\Vv$.

Suppose that $\phi\in \Symp(Z,\om_\ka)$. Since $\phi$ 
acts trivially on homology, parts (ii) and (iii) of
   Lemma \ref{le:phiZ} imply that
$\phi$ is symplectically isotopic to a symplectomorphism $\phi_1$ that is the identity near the singular points $\bp$. \MS

\NI {\bf Step 1:}  {\it   $\phi_1$ is symplectically isotopic to a symplectomorphism $\phi_2$ that is the identity near the divisors $D_1$ and $D_2$.}

Note that any symplectomorphism $\phi$ of  $Z$ that is the identity near the 
singular points $\bp$
lifts to a symplectomorphism $\Tilde\phi$ of $(X_k,\tau_\eps)$ that is the identity on some neighborhood $\Vv$ of  $\Cc$; cf. Figure \ref{fig:4}.

By Lemma~\ref{le:tech}, there is a path $J_t\in \Jj_{\Vv,reg}(\Om)$ from $J_\Vv$ to $J_1: = (\Tilde\phi_1)_*(J_\Vv)$.  Let $C_{3,t}, C_{k,t}$ be the  $J_t$-holomorphic  
representatives of the classes $E_3, E_k$. 
They are symplectically embedded by construction, and  as in the previous proof we may alter them by a symplectic isotopy supported near $\Cc$ to families of curves that coincide with $C_3=C_{3,0}$ and 
$C_k=C_{k,0}$ near $\Cc$ for all $t$.  Then,  the symplectic isotopy extension theorem implies that there is a family of symplectomorphisms $g_t$ with $g_0=id$, and such that, for all $t$, 
$$
g_t=id \mbox {  near }\Cc,\;\; g_t(C_3) = C_{3,t},\;\; g_t(C_k) = C_{k,t}.
$$
Then $ \Tilde\phi_2: = g_1^{-1}\circ \Tilde\phi_1$  takes $C_3$ to itself and $C_k$ to itself, and is the identity near the points 
of intersection with $\Cc$.

Since $C_k$ intersects $\Cc$ in a single point,  it is easy to adjust the isotopy $g_t$ so that $\Tilde\phi_2 = id$ on $C_k$.  However,
$C_3$ meets $\Cc$ in two points and so the induced map on 
$C_3\less \Cc$ may not be isotopic to the identity by an isotopy of compact support, although there is an isotopy to the identity 
 that fixes $C_3$ near the point 
$C_3\cap C_2$ and rotates $C_3$ near $C_3\cap C_0$.  On the other hand, it is not essential to consider only those symplectomorphisms that are the identity on $\Cc$ since all we need is that
the symplectomorphisms on $X_k$ descend to symplectomorphisms on $Z$.
Therefore we just need to check that there is an $S^1$ action near the singular point $p_2$ in $Z$  that lifts to an action near $C_0$ that fixes the point $C_0\cap C_3$ but rotates both $C_0$ and a neighborhood of
the point $C_0\cap C_3$ in
$C_3$.    But this is clear because  our local models are toric and there is a suitable $S^1$ subgroup of the torus $T^2$. 

Hence we may assume that $\Tilde\phi_2$ is the identity on 
 $C_3\cup C_k$ as well as near $\Cc$ and then make a final 
 isotopy in the directions normal to $C_3\cup C_k$ to make it the identity on a neighborhood.   This gives the desired isotopy in $X_k$.
Since all the symplectomorphisms  considered  are either equal to the identity near $\Cc$ or equal to a rotation that is contained in the local torus actions,  they push forward to $Z$, yielding the desired isotopy of $\phi_1$ to $\phi_2$.  
\MS
  
  \NI
  {\bf Step 2:}  {\it We may isotop $\phi_2$ to a
  symplectomorphism  $\phi_3$ that is also the identity near 
  the divisor $D_3$ represented by the edge $v_1v_2$ in the toric model
  of Figure \ref{fig:5}.}
  
  This edge pulls back to a curve $C'$ in $X_k$ in the class $L-E_{14}$.
  This class is again in $\Ee(X_k)$ and so has a smooth representative for $J\in \Jj_{\Vv,reg}(\Om)$.    Therefore this step may be accomplished by arguing as in Step 1.   
  Note that again $C'$ intersects $\Cc$ in two points.  Therefore to make $\phi_3=id $ on $D_3$ we may 
  need to rotate $Z$ near  its singular point of order $k$.
  This is possible as before.\MS 
  
     \begin{figure}[htbp] 
   \centering
   \includegraphics[width=4in]{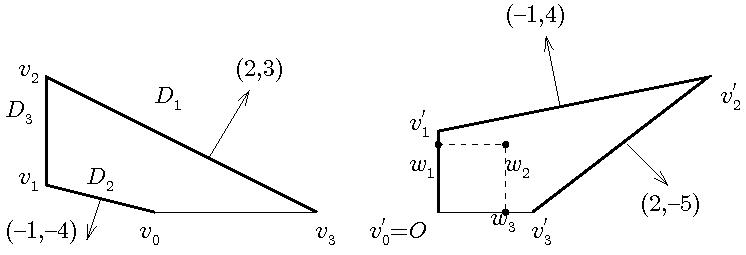} 
   \caption{The original moment polytope is on the left and its affine
  image is on the right.  The three divisors $D_1,D_2,D_3$ are represented by the thicker lines.}
   \label{fig:5}
\end{figure}

  \NI {\bf Step 3:} {\it Shrinking the support of $\phi_3$.}
 
  We have now isotoped $\phi$ to a symplectomorphism $\phi_3$ that  is the identity near the divisors
  $D_i, i=1,2,3,$  represented by the three edges $v_2v_3, v_0v_1$ and $v_1v_2$ of the moment polytope $\De$. Because $v_0$ is a smooth point of the polytope, 
  we may by an affine change of coordinates identify $\De$ with the polytope
  $O=v_0',v_1',v_2',v_3'$ where $O$, the image of the point $v_0$, is at the origin; cf. Figure \ref{fig:5}. 
 It follows that the open set $\Uu_Z= Z\less (D_3\cup D_1)$,  the inverse image under the moment map of $\De\less ( v_1v_2\cup v_2v_3 )$, has a natural
 Darboux chart whose image $\Uu_0$ is the  open convex subset of $(\C^2,\om_0)$
 defined by the two equations  
 $$
- |z_1|^2 + 4|z_2|^2 < c_1,\quad 2|z_1|^2 - 5|z_2|^2 < c_2,
 $$
 for some $c_i>0$.  (Note that the coefficients in these equations are given by the conormals to the edges $v_1'v_2', v_2'v_3'$.) Moreover, in these coordinates, the divisor $D_2$ corresponds to the 
 disc $z_1=0$.

 For $0<\la < 1$ let $m_\la$ be the image in  $\Uu_ Z$ of the rescaling map $\Uu_0\to \Uu_0$ given by multiplication by $\la$.
 Since $\phi_3$ has support in $\Uu_Z$ the symplectomorphism 
 $$
 \phi_{3+t}: = m_{1-t}\circ \phi_3\circ m_{1-t}^{-1}
 $$
  is well defined for all $t\in [0,1)$ and has support in $m_{1-t}(\Uu_Z)$.  In particular it is the identity on $D_2$ for all $t$.  Moreover, for $t$ sufficiently close to $1$ its support maps into a square of the form 
  $Ow_1w_2w_3$.   Thus its support is contained in the interior of a 
   subset of $(Z,\om_\ka)$ symplectomorphic to the product  $P: = \bigr(S^1\times [0,1]\bigl)\times D^2$ with a product symplectic form $\om_0$ that has the same integral over the two factors. 
  
  Now denote by $\Symp(P,\p P;\om_0)$ the 
group of symplectomorphisms of $(P,\om_0)$ that are the identity near the boundary.  Since the first three steps isotop $\phi$ to an element of 
$\Symp(P,\p P;\om_0)$,
the  following step completes the proof. 
 \MS
 
 \NI {\bf Step 4:}  {\it $\Symp_0(P,\p P;\om_0)$ is contractible.}\SSS
 
We may identify this group with the subgroup
$$
\Gg_0= \Bigl\{g\in \Symp(S^2\times S^2,\si\times\si)\,|\, g=id.\mbox{ near } S^2\times \{0\}\cup \{0,\infty\}\times S^2\Bigr\},
$$
of the group $\Gg$ of symplectomorphisms of $(S^2\times S^2, \si\times \si)$ that are the identity on $(\{0\}\times S^2)\cup (S^2\times \{0\})$. It follows from work of Gromov  that $\Gg$ is contractible; see the survey article \cite{LM1} or \cite[Ch~9.5]{MS}. Moreover,
there is a fibration sequence
$$
\Gg_0\to \Gg\stackrel{ev}\to {\mathcal Emb}
$$
where ${\mathcal Emb}$ is the space of symplectic embeddings
$\ov g:\{\infty\}\times S^2\to S^2\times S^2$ that 
extend to  elements of  $\Gg$.  (Notice that the fiber of $\ev$  consists in fact
of maps that are the identity {\it on} $\{\infty\}\times S^2$ and {\it near} 
the point $\{\infty\}\times \{0\}$, but  not {\it near} the whole of 
this sphere.   But a standard Moser argument shows that the space of such maps is homotopy
equivalent 
to $\Gg_0$.)
Because the two $2$-spheres in $S^2\times S^2$ have the same size, it follows as in \cite{LM1,MS} that
${\mathcal Emb}$  is homotopy equivalent to the contractible  space of $(\si\times \si)$-tame almost complex structures
on $S^2\times S^2$ that equal the product structure near 
$S^2\times \{0\}\cup \{0,\infty\}\times S^2$.   Therefore ${\mathcal Emb}$ and hence also $\Gg_0$ is contractible. \end{proof} 

\begin{lemma}\labell{le:Zconn}
 $\Symp_H(Z,\om_\ka)$ is connected for all $\ka \in(-\ell,0]$.
 \end{lemma}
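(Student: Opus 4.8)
The plan is to run the four-step scheme used in the proof of Lemma~\ref{le:toric}, replacing the global toric structure — which is no longer available once $\ka\ge 2(3-\ell)$ — by the resolution $\Phi_J\colon X_k\to Z$ together with the $J$-holomorphic curves supplied by Lemma~\ref{le:tech}. As in that proof, I would first use parts (ii) and (iii) of Lemma~\ref{le:phiZ} to isotope a given $\phi\in\Symp_H(Z,\om_\ka)$ to one that is the identity near the singular set $\bp$, and then lift it to a symplectomorphism $\Tilde\phi$ of $(X_k,\Om_\eps)$ that is the identity near $\Cc$, where $\Om_\eps$ equals $\tau_\eps$ near $\Cc$ and equals $\Phi_J^*(\om_\ka)$ elsewhere.

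Steps~1 and~2 of Lemma~\ref{le:toric} then go through essentially unchanged, since they never used the \emph{global} torus action but only the local toric models near $\bp$ and the existence of embedded $J$-holomorphic representatives of the classes $E_3$, $E_k$ and $L-E_{14}$. The first two are provided for every $J\in\Jj_{\Vv,reg}(\Om_\eps)$ by Lemma~\ref{le:tech}, while the third exists for generic $J$ because $L-E_{14}\in\Ee(X_k)$. Choosing a path $J_t$ from $J_\Vv$ to $(\Tilde\phi)_*(J_\Vv)$ in $\Jj_{\Vv,reg}(\Om_\eps)$, straightening the resulting families $C_{3,t},C_{k,t},C'_t$ near $\Cc$, and applying the symplectic isotopy extension theorem exactly as before, I would isotope $\phi$ to a symplectomorphism that is the identity near $D_1\cup D_2\cup D_3$.

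For the final reduction I would replace Steps~3 and~4 — which relied on the explicit affine Darboux chart coming from the global polytope — by a blow-down argument modelled on the proof of Proposition~\ref{prop:Zuniq}. Having arranged $\Tilde\phi$ to be the identity near all of $\Cc\cup C_3\cup C_k\cup C'$, I would blow down the configuration $\cup_{i=1}^k C_i$ to pass to $(\C P^2,\om_{\rm std})$ as in that proof. The map descends to a symplectomorphism of $\C P^2$ that is the identity near the two lines which are the images of $C_0$ and $C'$, and near the point which is the image of $C_k$. After blowing up that intersection point one is back in the situation of Step~4 of Lemma~\ref{le:toric}: Gromov's theorem, in the form exploited there, shows that the group of symplectomorphisms fixing such a configuration is contractible, so the descended map is isotopic to the identity rel that configuration. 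Lifting this isotopy back through the blow-up and then through the Symington blow-down of \S\ref{ss:Z} would yield the required isotopy of $\phi$ to the identity in $\Symp_H(Z,\om_\ka)$.

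The main obstacle is precisely this last reduction. Without the global Darboux chart one must verify that, after Steps~1--2, blowing down the whole curve configuration really does produce a symplectomorphism of $\C P^2$ that fixes enough of a two-lines-plus-point configuration for the $S^2\times S^2$ fibration argument of Step~4 to apply, and — more delicately — that the resulting isotopy on $\C P^2$ descends, through the orbifold blow-down near $\bp$, to a genuine isotopy on the \emph{singular} space $Z$ rather than merely on $X_k$. By contrast Steps~1--2 and the accompanying homological bookkeeping are routine once Lemma~\ref{le:tech} and Lemma~\ref{le:phiZ} are in hand.
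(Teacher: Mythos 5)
Your reduction steps are fine: isotoping $\phi$ to be the identity near $\bp$ via Lemma~\ref{le:phiZ}, lifting to $(X_k,\Om_\eps)$, and running Steps~1--2 of Lemma~\ref{le:toric} does work for non-toric $\ka$, since Lemma~\ref{le:tech} only needs the local toric model and the minimality of $E_k$ (which holds for $\ka\le 0$); indeed the paper itself invokes Step~1 of Lemma~\ref{le:toric} in exactly this setting. The genuine gap is in your last step. After the Symington blow-down, the configuration that the descended map fixes is not ``two lines plus a point'': it necessarily includes the two inserted regions $\Rr_1,\Rr_2$ of fixed positive size (replacing the collapsed chains $C_1\cup C_2\cup C_3$ and $C_4\cup\dots\cup C_k$), and any isotopy you hope to lift back to $X_k$ and then push down to $Z$ must be the identity near these regions throughout. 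The group of symplectomorphisms of $\C P^2$ that are the identity near such lines \emph{and} near $\Rr_1\cup\Rr_2$ is not controlled by Gromov's theorem: via the fibration over the space of symplectic embeddings of $\Rr_1\sqcup\Rr_2$, its $\pi_0$ is a quotient of $\pi_1$ of that embedding space, which is exactly the kind of unknown that the lemma itself amounts to --- your blow-down has reproduced the original problem in disguise, not reduced it. That something beyond Gromov is required is shown by the paper's remark after Definition~\ref{def:rig}: for $k\ge 5$ the analogous groups $\Symp_H(X_k,\om')$ can be \emph{disconnected} (Seidel's Dehn twists), so any correct proof must exploit a feature distinguishing $Z$ from $X_k$. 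Finally, note that the $S^2\times S^2$ argument of Step~4 only became applicable in the toric case after Step~3 had shrunk the support into a product region using the global affine chart; without that shrinking there is no identification of the relevant group with $\Gg_0$.

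The paper's actual proof supplies the missing ingredient by a quite different mechanism, and you will need some version of it. It first proves only the weaker statement $\Symp^c(Z\less\bp,\om_\ka)\subset\Diff_0(Z,\bp)$: after making the map the identity near $C_k$, it \emph{deforms the symplectic form} near $C_k$ into a class with a toric representative, then quotes Proposition~\ref{prop:Zuniq} and Lemma~\ref{le:toric} --- this yields a smooth, not symplectic, isotopy to the identity. The symplectic connectedness is then extracted from the fibration $\Symp^c(Z\less\bp,\om_\ka)\to\Diff_0^c(Z\less\bp)\stackrel{\al_\ka}\to\Ss\Ff(\ka)$: it is equivalent to surjectivity of $(\al_\ka)_*$ on $\pi_1$, and this surjectivity is transferred from a toric level $\ka'$ by showing that $\Ss\Ff(\ka)\simeq\Aa(\ka)=\Aa(\ka')\simeq\Ss\Ff(\ka')$, where the key equality of the spaces of almost complex structures rests on Pinsonnault's curve in class $E_k$ (Lemma~\ref{le:tech}(ii)) and inflation along it. It is this comparison of spaces of forms in different cohomology classes --- rather than any isotopy of maps rel a fixed configuration --- that does the work your final step leaves undone.
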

 \begin{proof} 
We shall suppose that $\ka$ is too large for $(Z,\om_\ka)$ to be toric, since otherwise there is nothing to prove.
 Denote by $\Diff_0^c(Z\less\bp)$
 the identity component of the group of compactly supported diffeomorphisms 
 of $Z\less \bp$.  We give this (and all similar spaces) the usual direct limit topology so that the elements in any compact subset of $\Diff_0^c(Z\less\bp)$ all equal  the identity on some fixed neighborhood of $\bp$.
 Similarly, let  $\Symp^c(Z\less\bp,\om_\ka)$ be the subgroup of 
 compactly supported elements of $\Symp(Z\less\bp,\om_\ka)$.
 Elements of this group must fix the classes 
 $e(Y), [\om_\ka]$ by Lemma~\ref{le:phiZ} and so act trivially on homology.
 
  We will assume as we may that $\om_\ka$ is standard in some
   neighborhood of $\bp$, and denote  by
  $\Om$ the symplectic form on $X_k$ that equals $\tau_\eps$ on
  $\Vv: = \Phi_J^{-1}(\Nn)$ 
  and  equals 
 $ \Phi_J^*(\om_\ka)$ on $X_k\less \Vv$.
  \MS
  
  \NI {\bf Step 1.} \,{\it It suffices to show that  $\Symp^c(Z\less\bp,\om_\ka)$ is path connected.}\SSS
  
  As in the second paragraph of the proof of Lemma~\ref{le:toric}, every element in  $\Symp_H(Z,\om_\ka)$  is isotopic to 
  a symplectomorphism that is the identity near $\bp$.
  
  \MS

  \NI {\bf Step 2.}\, $\Symp^c(Z\less\bp,\om_\ka)\subset \Diff_0(Z,\bp)$.\SSS
  
 Every $\phi\in \Symp^c(Z\less\bp,\om_\ka)$ lifts to a symplectomorphism 
 $\Tilde\phi$ of 
 $(X_k,\Om)$ that is the identity in $\Vv$.  As in Step 1 of the proof of Lemma~\ref{le:toric},  there is a path  $\Tilde\phi_t\in \Symp(X_k,\Cc,\Om)$ starting at $\Tilde\phi$ and ending at an element $\Tilde\phi_1$ that is the identity in some  neighborhood $\Nn(C_k)$ of  $C_k$.  Then change the symplectic form $\Om$ in $\Nn(C_k)$, 
 decreasing the size of $C_k$, to a form $\Om'$ that lies in a class with a toric representative.    Then $\Tilde\phi_1$ preserves the form $\Om'$ and so is the lift of an element $\phi_1$ in    
 $\Symp(Z,\om_{\ka'})$, where $\om_{\ka'}$ is homologous to a toric form. Therefore $\om_{\ka'}$ is diffeomorphic to a toric form by Proposition~\ref{prop:Zuniq} and we can apply Lemma~\ref{le:toric} to conclude that $\phi_2$ is smoothly isotopic to the identity.  Since $\phi$ is smoothly isotopic to $\phi_1$ by construction, this proves Step 1.
 \MS
 
  \NI {\bf Step 3.} {\it $\Symp^c(Z\less\bp,\om_\ka)$ is path connected.}\SSS
   
 Denote by $\Ss\Ff(\ka)$  the 
 space of all symplectic forms that are isotopic to $\om_\ka$ through a family $\om_{t,\ka}, t\in [0,1],$ of symplectic forms that are all standard in some fixed neighborhood of $\bp$.
Since $\Diff_0^c(Z\less\bp)$ acts transitively on $\Ss\Ff(\ka)$, there is a 
 fibration sequence
 $$
 \Symp^c(Z\less\bp,\om_\ka)\;\to\; \Diff_0^c(Z\less\bp)\;\stackrel{\al_\ka}\to \;\Ss\Ff(\ka).
 $$
It suffices to show that the map $(\al_\ka)_*:\pi_1(\Diff_0(Z,\bp))\to \pi_1(\Ss\Ff(\ka))$ is surjective.  By Lemma~\ref{le:toric} there is $\ka'<\ka$ for which this holds.
Therefore it suffices to construct a map $r: \Ss\Ff(\ka)\to \Ss\Ff(\ka')$ such that
$\al_{\ka'} $ is weakly homotopic to $r\circ \al_\ka$.

 To this end, 
 we use some ideas from \cite{Mac}. (As explained at the end of the proof, this approach gives somewhat more than we need.) 
  Denote by $J_\Nn$  the image of $J_\Vv$ under the blow down map $X_k\to Z$.
 Consider the space $\Aa(\ka)$ of all almost complex structures $J$ on $Z$ 
 that are  tamed by some form in $\Ss\Ff(\ka)$ and are equal to $J_\Nn$ near $\bp$.
Further, define $\Xx(\ka)$ to be the space of all  pairs $(\om,J)\in \Ss\Ff(\ka)\times \Aa(\ka)$ such that $\om$ tames $J$.
Then the projection map $\Xx(\ka)\to \Ss\Ff(\ka)$ has contractible fibers and so is a homotopy equivalence.  A similar statement holds for the projection $\Xx(\ka)\to \Aa(\ka)$.  (Because everything is normalized near $\bp$, the singular points  cause no problem.)

We now claim that $\Aa(\ka) = \Aa(\ka')$ for all $\ka'<\ka$.
This holds by Lemma~\ref{le:tech} (ii).  For every $J\in \Aa(\ka)$  
there is a unique embedded $J$-holomorphic curve $C_J$ in class $E_k$.
If $J$ is tamed by $\om\in \Ss\Ff(\ka)$, $\om$ is nondegenerate on $C_J$ and therefore we can inflate $\om$ along $C_J$, constructing a family of forms $\om_\la, \ka'\le \la\le \ka,$ that 

\begin{itemize}
\item tame $J$,
\item equal $\om$ away from $C_k$  and 
\item
are such that
$\int_{C_k}\om_\la = \la$.  
\end{itemize}
For details, see \cite{Mac}.  
(The argument needed for this is a little more delicate than in the usual inflation procedure since 
the forms $\om_\la$ must tame $J$.)

This argument shows that the spaces $\Ss\Ff(\ka)$ and $\Ss\Ff(\ka')$ are homotopy equivalent.  Further, we can define a map
$r:\Ss\Ff(\ka)\to\Ss\Ff(\ka')$ that induces this equivalence and is
 unique up to homotopy, as follows: given a compact family $\Mm = \{\om_\mu\}$ of  elements of $\Ss\Ff(\ka)$ choose a corresponding family $J_\mu$ of $\om_\mu$ tame almost complex structures, and then alter the
$\om_\mu$ appropriately near the curves $C_{J_\mu}$ to a family $\om_{\mu,\la}$.   There are choices here, but they are equivalent up to homotopy.  

Note finally that if all we aim to do is construct this map $r$ we can use the less delicate version of inflation: there is no need to insist that the modified forms
$\om_{\mu,\la}$ are $J_\mu$ tame.  Also, if we are only interested in $\pi_1$ we can restrict to one dimensional families $\Mm$.

Since
$\al_{\ka'} $ is clearly weakly homotopic to $r\circ \al_\ka$, this completes the proof.  
\end{proof}
 
 \begin{rmk}\rm The argument in Step 3 above shows that
 the homotopy type of the group $\Symp_0^c(Z\less\bp,\om_\ka)$ is independent of $\ka\in (-\ell,0]$.    In contrast, the homotopy type of the groups 
 $\Symp(\C P^2\#\ov{\C P}\,\!^2, \om_\la)$ vary with the cohomology class of the form $\om_\la$.  However, the one point blow up of $\C P^2$ is the unique manifold for which Pinsonnault's result 
 quoted in Lemma~\ref{le:tech} (ii) fails to hold.
 \end{rmk}

\begin{rmk}\rm Although we have carried
 out the proof of Theorem~\ref{thm:rigid} for the orbifold $Z$, most of the arguments 
  apply much more widely.  For example, the uniqueness statement of Proposition~\ref{prop:Zuniq} easily extends to the case when  $Z$ is an orbifold blow up of the weighted projective space
 $\C P^2(1,a,b)$ at $k$ distinct points, provided that $a,b$ are 
relatively prime.   (If $a,b$ are not relatively prime then the singularities of
 $\C P^2(1,a,b)$ are no longer isolated points,  and one would have to use a different kind of resolution.)
 One might even be able to extend it  further 
 (for example to  blowups of any $\C P^2(a,b,c)$), perhaps by using  
 the techniques developed to understand fillings of simple singularities as in Ohta--Ono~\cite{OO}. Chen has a  different approach to these questions that is based on extending Seiberg--Witten--Taubes theory to
 the orbifold setting; cf. Chen \cite{Ch}.
 
 Similarly the {\it deformation implies isotopy}  property of $Z$ is very general, and should hold for any blow up that is resolved by some $N$-fold blow up of $\C P^2$.  However, the {\it connectness} property is more delicate, just as it is in the case of the $X_N$.
\end{rmk}

\section{Construction and properties of $M_\ell$.}\labell{s:3}

Most of the first subsection is devoted to the existence proof.  However it also contains Lemma~\ref{le:reg} which, together with Lemma~\ref{le:sing} in \S\ref{ss:uniq}, are the basic ingredients of the
uniqueness proof. The last subsection \S\ref{ss:23}  discusses  the 
cases $\ell=2,3$.

\subsection{Existence.}\labell{ss:exist}

We first  prove the following result.

\begin{prop}\labell{prop:ZZ} For each $\ell$, there is a Hamiltonian $S^1$ manifold $(M^{\le 0}, \Om^-)$ with boundary $(Y^-,\Om^-)$ whose reduced spaces  at level $\ka$ are 
$$
\begin{array}{ll}
\bigl(\C P^2_{1,2,3}, \,\frac{6+\ka}6\tau_{1,2,3}\bigr)&\mbox{when } -6\le\ka < -\ell,\\
(Z,\om_\ka)&\mbox{when } -\ell<\ka\le 0.
\end{array}
$$
\end{prop}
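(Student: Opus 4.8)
The plan is to construct $(M^{\le 0},\Om^-)$ by inverting the reduction procedure over $[-6,0]$. This interval carries two critical points, the minimum $x_4$ (weights $(1,2,3)$) at $\ka=-6$ and the index~$2$ critical point $x_3$ (weights $(1,\ell,-1)$) at $\ka=-\ell$, so $\ka=-\ell$ is the only interior critical value. I would assemble $M^{\le 0}$ from a piece below $-\ell$, a regular slab above $-\ell$, and Godinho's standard local model across $\ka=-\ell$, glued along regular levels. For the piece below $-\ell$, take the closed ellipsoid $E_\ka:=\{|z_1|^2+2|z_2|^2+3|z_3|^2\le 6+\ka\}\subset(\C^3,\om_0)$ with the linear action of weights $(1,2,3)$ and Hamiltonian $H=-6+(|z_1|^2+2|z_2|^2+3|z_3|^2)$. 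There are no critical values in $(-6,-\ell)$, and by Example~\ref{ex:ex} the reduction of $\p E_\ka$ at level $\ka$ is $(\C P^2_{1,2,3},\tfrac{6+\ka}{6}\tau_{1,2,3})$; thus $E_\ka$ realises $M^{\le\ka}$ for $-6\le\ka<-\ell$ with the required reduced spaces.

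For the regular slab $-\ell<\ka\le 0$ there are again no critical values, so the reduced spaces are all copies of $Z$ and exist with the prescribed cohomology class by Lemma~\ref{le:Z}. I would rebuild this slab by minimal coupling. Take the principal $S^1$-orbibundle $\pi:Y^-\to Z$ with Euler class $e(Y)$ and a connection form $\al$ whose curvature $F$ (defined by $d\al=\pi^*F$) represents $-e_Z(Y)$, the computation carried out $\Z_m$-equivariantly on the local uniformizers near $\bp$. Fixing a reference reduced form $\om_0$ on $Z$ at level $0$ and setting $\Om^-:=\pi^*\om_0+d(\ka\,\al)$ on $Y^-\times(-\ell,0]$ gives a closed, $S^1$-invariant form with Hamiltonian $\ka$ whose reduction at level $\ka$ is $(Z,\om_0+\ka F)$. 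This reduced class varies linearly in $\ka$, exactly as forced by $\tfrac{d}{d\ka}[\om_\ka]=-e_Z(Y)$ (Corollary~\ref{cor:H2}); substituting (\ref{eq:eY}) recovers the normalisation (\ref{eq:omZ}), and by the deformation-implies-isotopy part of Theorem~\ref{thm:rigid} these reductions agree with the forms of Lemma~\ref{le:T} up to isotopy.

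The two pieces meet across the index~$2$ critical point $x_3$. Its weights $(1,\ell,-1)$ are handled by Godinho's local analysis \cite{God}: passing upward through such a level performs an orbifold blow up of the reduced space with weights $(1,\ell)$, creating the order-$\ell$ singular point $p_\ell$ and replacing $\C P^2_{1,2,3}$ by $\C P^2_{1,2,3}\#\ov{\C P}\,\!^2_{1,1,\ell}=Z$. Inserting Godinho's standard Hamiltonian model over a small neighbourhood $H^{-1}([-\ell-\eps,-\ell+\eps])$ of the critical value, and gluing it to the ellipsoid at $\ka=-\ell-\eps$ and to the minimal-coupling slab at $\ka=-\ell+\eps$, completes the construction; since both gluings take place along regular levels they reduce to matching the boundary $S^1$-bundles together with their forms.

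The step I expect to be the main obstacle is the quantitative matching at $\ka=-\ell$: one must verify that the exceptional divisor $\C P^1_{1,\ell}$ produced by crossing $x_3$ has precisely the size demanded by (\ref{eq:omZ}), i.e. that the size of $\ov{\C P}\,\!^2_{1,1,\ell}$ opens up like $(\ell+\ka)/\ell$ as $\ka$ increases through $-\ell$, so that the boundary forms on the two sides genuinely agree. This is exactly the content of the ellipsoidal embedding solution of \cite{M} (equivalently, of Godinho's Duistermaat--Heckman computation underlying Corollary~\ref{cor:H2}); once those normalisations are invoked the three pieces fit together and $(M^{\le 0},\Om^-)$ exists.
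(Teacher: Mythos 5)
Your three-piece outline has the same skeleton as the paper's proof (an explicit piece below and through the critical value $\ka=-\ell$, then an extension over the regular interval $(-\ell,0]$ using the reduced spaces of Lemma~\ref{le:Z}), but two of your steps have genuine gaps. The main one is the slab. Minimal coupling with a \emph{fixed} connection $\al$ forces the reduced forms to be the linear family $\om_0+\ka F$, and you never show these are nondegenerate for all $\ka\in(-\ell,0]$. This is not automatic: as $\ka\downarrow -\ell$ the class $[\om_\ka]$ approaches the boundary of the symplectic cone (its integral over $D_2$ tends to $0$), and while each class in the open interval is represented by \emph{some} symplectic form (Lemma~\ref{le:Z}, which requires the nontrivial ellipsoid embeddings of \cite{M} once $\ka\ge 2(3-\ell)$, i.e.\ outside the toric range), a single closed representative $F$ of $-e_Z(Y)$ has no reason to keep $\om_0+\ka F$ nondegenerate on the whole interval. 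Your appeal to the deformation-implies-isotopy part of Theorem~\ref{thm:rigid} cannot repair this, because that statement compares families that are already symplectic. The correct tool is the paper's Lemma~\ref{le:reg}: it produces the Hamiltonian manifold from an \emph{arbitrary} smooth family of symplectic forms $\om_\ka$ whose derivative lies in the fixed integral class, using $\ka$-dependent connection $1$-forms $\al_\ka$ in $\Om=\pi^*(\om_\ka)+\al_\ka\wedge d\ka$; one feeds it the (generally nonlinear) family supplied by \cite{M}, rather than manufacturing a linear one.

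The second gap is the crossing piece. What \cite{God} provides, and what Lemma~\ref{le:T} uses, is an analysis of how the reduced spaces of a \emph{given} Hamiltonian manifold change across a critical level of weights $(-1,m_2,m_3)$; it is not an off-the-shelf equivariant model over $H^{-1}([-\ell-\eps,-\ell+\eps])$ with prescribed normalizations that you can splice between two other pieces. (Also, the "quantitative matching" you flag as the main obstacle --- the rate $(\ell+\ka)/\ell$ at which $\C P^1_{1,\ell}$ opens up --- is local Duistermaat--Heckman determined by the weights at $x_3$, i.e.\ the easy part; the role of \cite{M} is existence of $(Z,\om_\ka)$ for $\ka$ up to $0$, beyond the toric range.) Moreover each of your two splices along regular levels needs the uniqueness half of Lemma~\ref{le:reg} together with the rigidity of $Z$ to convert agreement of cohomology classes into agreement of germs. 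The paper eliminates all of this at a stroke: because the weights at $x_4$ and $x_3$ are $(1,2,3)$ and $(1,\ell,-1)$, the entire sublevel set $M^{\le -\ell+\eps}$ is a single toric manifold with the explicit polytope of Figure~\ref{fig:7} (your ellipsoid is exactly its subpolytope below level $-\ell$), so the minimum, the region below $-\ell$, and the critical crossing come in one construction, and the only remaining step is the regular extension over $(-\ell,0]$ via Lemma~\ref{le:reg}. Replacing your ellipsoid-plus-Godinho gluing by that toric model, and your fixed-connection coupling by Lemma~\ref{le:reg}, closes both gaps.
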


Its proof is based on the following 
 well known lemma. In it, the word  \lq\lq unique"
means unique up to equivariant symplectomorphism.

\begin{lemma}\labell{le:reg} Suppose that $(M,\Om)$ is a (possibly noncompact) Hamiltonian $S^1$-bundle
with proper moment map $H:M\to \R$.  Let $I\subset \R$ be an interval that contains no critical points of $H$ with corresponding family of 
reduced spaces $(V,\om_\ka)$.   Then
the slice $H^{-1}(I)$ is uniquely determined by the family of forms $\om_\ka, \ka\in I,$ on the orbifold $V$.  

Moreover, given any  family of symplectic forms
$\om_\ka, \ka\in I,$ on an orbifold $V$ such 
that $\frac d{d\ka} \om_\ka$ is an integral class whose corresponding orbibundle $S^1\to Y\stackrel{\pi}\to V$ has smooth total space, there is a (unique) corresponding Hamiltonian $S^1$ manifold $(M,\Om)$  with reduced spaces $(V,\om_\ka)$. 
\end{lemma}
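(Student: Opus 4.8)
The plan is to reduce everything to the study of $S^1$-invariant symplectic forms on a product $Y\times I$, where $Y:=H^{-1}(\ka_0)$ is a chosen regular level set, treating existence as a direct construction and uniqueness by an equivariant Moser argument. Fix $\ka_0\in I$; since $I$ contains no critical values and $H$ is proper, $Y$ is the smooth total space of a principal $S^1$-orbibundle $\pi:Y\to V$ (the action having at worst finite stabilizers), and the normalized gradient flow of $H$ for an invariant metric gives an $S^1$-equivariant identification of $H^{-1}(I)$ with $Y\times I$ carrying $H$ to the projection onto $I$. Thus it suffices to understand invariant symplectic forms $\Om$ on $Y\times I$ with $\iota_\xi\Om=-d\ka$ (so that $\ka$ is the moment map and $\xi$ the generating vector field) whose restriction to the slice $Y\times\{\ka\}$ descends to the prescribed $\om_\ka$ on $V$.

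For existence I would build such an $\Om$ explicitly. Let $\widehat\om$ be the $2$-form on $Y\times I$ equal to $\pi^*\om_\ka$ at $(y,\ka)$, and let $\al_\ka$ be a (possibly $\ka$-dependent) connection $1$-form on $Y$, normalized by $\al_\ka(\xi)=1$, with curvature $d_Y\al_\ka=\pi^*\si_\ka$. Setting $\Om:=\widehat\om+d\ka\wedge\al_\ka$, a computation using $d_V\om_\ka=0$ gives
\[
d\Om \;=\; d\ka\wedge\pi^*\bigl(\p_\ka\om_\ka-\si_\ka\bigr).
\]
Because $\frac{d}{d\ka}[\om_\ka]$ is the integral class realized as the Euler class of $Y\to V$, the closed form $\p_\ka\om_\ka$ lies in the Euler class for every $\ka$, so I may choose $\al_\ka$ with curvature $\si_\ka$ equal to $\p_\ka\om_\ka$ on the nose; then $\Om$ is closed. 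One checks that $\iota_\xi\Om=-d\ka$, that $\Om$ restricts on each slice to $\pi^*\om_\ka$ and hence has reduced form exactly $\om_\ka$, and that $\Om$ is nondegenerate from its block structure (symplectic along the reduced directions, and pairing $\p_\ka$ against $\xi$ through $d\ka\wedge\al_\ka$). This produces the required $(M,\Om)$.

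For uniqueness, suppose $(M_i,\Om_i)$, $i=1,2$, both realize the family $(V,\om_\ka)$. The Euler class of the level orbibundle is determined by $\frac{d}{d\ka}[\om_\ka]$, so the two bundles $Y_i\to V$ are equivariantly isomorphic; with the gradient-flow trivialization this yields an $S^1$-equivariant diffeomorphism $\Psi:M_1\to M_2$ satisfying $H_2\circ\Psi=H_1$, reducing us to comparing $\Om_1$ and $\Psi^*\Om_2$ on a single $Y\times I$. Each restricts on every slice to $\pi^*\om_\ka$ (the restriction is basic, since its contraction with $\xi$ vanishes and $\pi^*$ is injective), so $\Om_i-\widehat\om$ vanishes on the slices and therefore equals $d\ka\wedge\al^{(i)}_\ka$ for connection forms $\al^{(i)}_\ka$; closedness forces both curvatures to equal $\p_\ka\om_\ka$. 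Hence $\al^{(2)}_\ka-\al^{(1)}_\ka=\pi^*\delta_\ka$ with $\delta_\ka$ closed on $V$, and the linear path of connections gives a path $\Om_t$ of invariant symplectic forms, all with moment map $\ka$ and reduced forms $\om_\ka$, with $\dot\Om_t=d\ka\wedge\pi^*\delta_\ka$. Then $\la:=\pi^*\beta_\ka$ with $\beta_\ka:=\int_{\ka_0}^{\ka}\delta_s\,ds$ is an invariant, horizontal primitive of $\dot\Om_t$, and solving $\iota_{X_t}\Om_t=-\la$ gives an invariant Moser field tangent to the level sets whose time-$1$ flow is an $S^1$-equivariant symplectomorphism intertwining the moment maps and carrying $\Om_1$ to $\Psi^*\Om_2$.

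The step I expect to be the main obstacle is making the Moser argument simultaneously equivariant and moment-map preserving: the conclusion rests on producing a primitive $\la$ of $\dot\Om_t$ that is both $S^1$-invariant \emph{and} horizontal ($\iota_\xi\la=0$), for only then is the Moser field invariant and tangent to the fibres of $H$, so that its flow preserves both the action and $H$. The automatic normal-form reduction above is exactly what makes this possible, since it forces $\dot\Om_t$ to be $d\ka\wedge\pi^*(\text{closed})$ and so to admit a manifestly horizontal primitive. The remaining care is bookkeeping over the orbifold base: I would run the connection, averaging, and Moser constructions upstairs on the smooth, locally free $Y$ and check that they descend, and use properness of $H$ together with the absence of critical values to justify the gradient-flow trivialization and the completeness of the Moser flow across all of $I$.
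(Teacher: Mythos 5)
Your proposal follows essentially the same route as the paper's own (very brief) proof: the paper's stated ``key point'' is exactly your normal form $\Om = \pi^*(\om_\ka) + \al_\ka\wedge d\ka$ on $Y\times I$, with $\al_\ka$ a family of connection $1$-forms, the free case being deferred to \cite[Prop~5.8]{MSI} and the orbifold bookkeeping to \cite[\S3]{KT}. Your existence step (choosing $\al_\ka$ with curvature equal to $\p_\ka\om_\ka$ on the nose, so that $d\Om = d\ka\wedge\pi^*(\p_\ka\om_\ka - \si_\ka)=0$), your verification of nondegeneracy and of the moment condition, and your uniqueness step (showing that any invariant form with the given reduced data has this normal form, then running a Moser argument with the invariant \emph{horizontal} primitive $\pi^*\beta_\ka$, so that the Moser field is tangent to the levels and equivariant) are all correct, and they supply precisely the details that the paper leaves to the references.

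There is, however, one step in your uniqueness argument that fails as stated: the claim that ``the Euler class of the level orbibundle is determined by $\frac{d}{d\ka}[\om_\ka]$, so the two bundles $Y_i\to V$ are equivariantly isomorphic.'' The class $\frac{d}{d\ka}[\om_\ka]$ lives in $H^2(V;\R)$, and this rational Euler class does \emph{not} classify $S^1$-orbibundles. For example, over the orbifold $S^2$ with two cone points of order $5$, the Seifert fibrations with local invariants $\{(5,1),(5,4)\}$ and $\{(5,2),(5,3)\}$ (and the same background integer) have equal rational Euler number but are not $S^1$-equivariantly isomorphic, since the isotropy representations along the singular fibers differ; even for free actions the claim fails whenever $H^2(V;\Z)$ has torsion. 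Consequently two slices can carry identical families $(V,\om_\ka)$ of reduced spaces without being equivariantly symplectomorphic, so $\Psi:M_1\to M_2$ cannot be produced from the forms alone. The cure is to treat the orbibundle $S^1\to Y\to V$ as part of the given data: this is how the second paragraph of the lemma is phrased (the literal first sentence shares your looseness), and it is how the lemma is actually used in the paper, where the bundle over $Z$ is pinned down separately via its restriction to $Z\less\bp$ and the torsion-free group $H^2(Z\less\bp;\Z)$ of Lemma~\ref{le:H2}, and is tracked through the gluing by Lemmas~\ref{le:phiZ} and~\ref{le:eul}. Once both manifolds are presented over the same $Y$, your normal form plus Moser argument goes through verbatim and completes the proof.
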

\begin{proof}[Sketch of proof]  All statements here, except possibly for the uniqueness, are well known.    If the action on $H^{-1}(I)$ is free,
 then this is proved in \cite[Prop~5.8]{MSI}.
 The key point is to write $\Om$ on $M\equiv Y\times I$ in the form
 $$
 \Om = \pi^*(\om_\ka) + \al_\ka\wedge d\ka,
 $$
 where $\al_\ka$ is a suitable family of connection $1$-forms on the
 circle bundle $\pi:Y\to V$.  The argument in the general case is similar; one simply has to understand the behavior of connection $1$-forms on $S^1$-orbibundles.  For further details, see
 Karshon--Tolman  \cite[\S3]{KT}.  
 \end{proof}

\begin{rmk}\rm  In \cite{KT} Karshon and Tolman work with \lq\lq complexity one" spaces, i.e. manifolds of dimension $2k\ge 4$ with Hamiltonian actions of $T^{k-1}$. In this case, the reduced spaces $V_\ka$ have real dimension $2$ so that their  symplectic structure  is determined by cohomological information --- indeed, just by the Duistermatt--Heckmann measure.  The arguments and definitions
in \cite[\S3]{KT} carry over to the case when the reduced spaces are rigid in the sense of Definition \ref{def:rig}.
For example, Gonzalez  shows in \cite{Gonz} that in the rigid case the total space $(M,\Om)$ depends only on 
the cohomology classes $[\om_\ka], \ka\in I$.  
However, in the general case considered in Lemma \ref{le:reg},
the family of forms  may contain some more information.  Therefore, to use the language of \cite{KT}  one needs to formulate an appropriate redefinition of Karshon--Tolman's concept of a $\Phi-T$-diffeomorphism: besides commuting with the moment map, the induced family of diffeomorphisms on the reduced space $V$ should preserve the family of forms $\om_\ka$.
\end{rmk}

\NI {\bf Proof of Proposition \ref{prop:ZZ}.}
 As illustrated in Figure \ref{fig:7}
we may construct the sublevel set  $(M^{\le-\ell+\eps},\Om^-)$ as a toric manifold.
When translated vertically upwards by $6$ so that its lowest vertex is at the origin and $x_3=\ka +6$, its moment polytope in $\R^3$ is described  by the inequalities:
\begin{gather}\notag
x_1\ge 0,\quad x_2\ge 0,\quad 2x_1+3x_2\le {\ts\frac {x_3}6},\\ \notag x_1+4x_2\ge 1-{\ts \frac 6{\ell}+\frac {x_3}{\ell}},\quad  0\le x_3\le 6-\ell+\eps.
\end{gather}
The slice $(M^{(-\ell,-\ell+\eps)},\Om)$ is a union of circle (orbi-)bundles over the reduced spaces $(Z,\om_\ka)$ and, by Lemma~\ref{le:reg}, may be extended by
attaching  circle (orbi)bundles over $(Z,\om_\ka)$ for $-\ell<\ka\le 0$.
\QED

\begin{figure}[htbp] 
   \centering
   \includegraphics[width=2in]{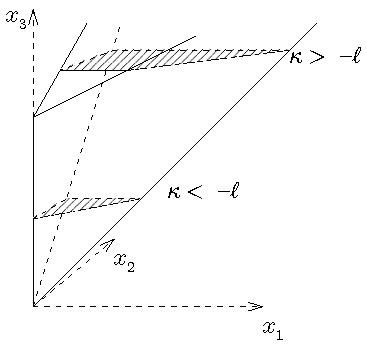} 
   \caption{The moment polytope for  $M^{\le-\ell+\eps}$ showing its two types of 
  critical level.  Here $x_3 = \kappa + 6\ge 0$. }
   \label{fig:7}
\end{figure}

The manifold $(M^{\le 0}, \Om^-)$ realises the sublevel set 
$H^{-1}([-6,0])$.   Denote by $(M^{\ge 0}, \Om^+)$ the 
Hamiltonian $S^1$ manifold that is diffeomorphic to 
$(M^{\le 0}, \Om^-)$ but has the reversed $S^1$ action.  In other words, if $\io$ denotes the identity map and $-id: S^1\to S^1$ takes 
$t$ to $-t$, then there is a commutative diagram
$$
\begin{array}{ccc} S^1\times M^{\le 0} &\stackrel{-id\times \io}\longrightarrow &S^1\times M^{\ge 0}\\
\al^-\downarrow\qquad &&\al^+\downarrow\qquad\\
M^{\le 0}& \stackrel{\io}\longrightarrow& M^{\ge 0},
\end{array}
$$
where $\al^{\pm}$ is the action on $M^{\gtrless 0}$. Then the Hamiltonian generating $\al^+$ is $-H\circ\io$, and
$\io$ induces a symplectomorphism between the reduced 
space of $M^{\le 0}$ at level $\ka\le 0$ and that of $M^{\ge 0}$ at level $-\ka\ge 0.$
\MS

\NI {\bf Proof of Lemma \ref{le:eul}.}   This lemma states that  to
 construct $(M_\ell, \Om)$ it suffices to find a symplectomorphism of $(Z,\om_0)$ that changes the sign of the  Euler class $e(Y)$.  Thus, from such a symplectomorphism $\phi_Z$ we need to find  a way to 
 glue $(M^{\le 0}, \Om^-)$ to $(M^{\ge 0}, \Om^+)$
along their common boundary  in an equivariant and smooth way.  
We shall prove this by applying Lemma~\ref{le:reg}.

In the following argument $\de>0$ is a small constant that may be decreased as needed.
First choose a smooth family $\psi_\ka, \ka\in (-\de,0],$ of diffeomorphisms
from the quotient spaces $H^{-1}(\ka)/S^1$ to $Z$. 
Let $\si_\ka$, $\ka\in (-\de,0],$ be the corresponding smooth family of symplectic forms induced by $\Om^-$. By adjusting $\psi_\ka$ we may assume that $\si_0=\om_0$ and that the 
$\si_\ka$ are standard in some neighborhood 
$\Nn$ of the singular points $\bp$. Then
 the Duistermatt--Heckman formula implies that the $2$-form 
$$
\la : = \frac d{dt}\si_\ka\Bigl|_{t=0}\Bigr.
$$
represents the class $-e(Y)$, and by further adjusting the
$\psi_\ka$ (and deceasing $\de$) we may suppose
that 
$$
\si_\ka = \om_0 +\ka\la,\quad\ka\in (-\de,0].
$$ 
Then Lemma~\ref{le:reg} implies that
a neighborhood of the boundary of $(M^{\le 0},\Om^-)$
is determined by this family $\si_\ka, \ka\in (-\de,0]$.

Now define the $1$-form $\ga$ on $Z$  by the equation 
$\phi_Z^*(\la) = -\la+d\ga$.  Since $H^2(Z) = H^2(Z,\Nn)$,
 we may suppose that $\ga=0$ in $\Nn$.  By a standard Moser argument, there is a smooth family of diffeomorphisms $f_\ka:Z\to Z$  that are the identity in $\Nn$ and such that for small $\de$
 $$
 f_0=id,\quad f_\ka^*(\om+\ka\la) = \om_0+\ka(\la-(\phi_Z^{-1})^*d\ga),\quad \ka\in (-\de,0].
 $$ 
 Now set
 $$
 \om_\ka: = \phi_Z^*\bigl(f_{-\ka}^*(\om_{-\ka})\bigr) = \om_0+\ka\la,\quad \ka\in [0,\de).
 $$
 Again, Lemma~\ref{le:reg} implies that
a neighborhood of the boundary of $(M^{\ge 0},\Om^+)$
is determined by this family $\om_\ka, \ka\in [0,\de)$.
But these two families  fit smoothly together over $(-\de,\de)$.
Hence the result follows by another application of Lemma~\ref{le:reg}.
\QED

\NI {\bf Proof of Proposition \ref{prop:ell} part (i)}.   We must show
 that there is a symplectomeophism $\phi_Z$ as above. To see this,
consider the reduced space  $(Z,\om_0)$ at level  zero.
The form $\Phi_J^*(\om_0)$ is cohomologous to $-K=[\tau]$ (cf. equation (\ref{eq:tau})) since it vanishes on the contracted set $\Cc$
and takes the value $1$ on $E_3,E_k$ by Example \ref{ex:ex}.
(Note that the form $\Phi_J^*(\om_0)$  is degenerate along $\Cc$ and so is not symplectic.) 
Further by equations (\ref{eq:int}) and  (\ref{eq:eY})
\begin{eqnarray}\labell{eq:phie}
&&\Phi_J^*(e(Y))(C_k) = e(Y)({\C P}^1_{1,\ell})=-\ts{\frac 1\ell},\quad \Phi_J^*(e(Y))(C_3)=-{\ts \frac 16}, \\\notag
&&\Phi_J^*(e(Y))(L-E_{123}) =e(Y)({\Phi_J}_*(C_0))=0.
\end{eqnarray}
Hence $\chi_k = -\Phi_J^*(e(Y))$.

Now consider the case $\ell=4$.
 By Lemma~\ref{le:2}
  there is a  diffeomorphism $\psi$ 
of $X_7$ that reverses the sign of  $\Phi_J^*(e(Y))$.  
Denote $\Cc':= \psi(\Cc)$ and  $J': = \psi_*J$, and
let $Z'$ be the image of $(X_7,J')$ under the map $\Phi_{J'}$ that  contracts the curves in $\Cc'$.   
Observe that
 $\Cc'$ is a union of $J'$-holomorphic curves $C'_i$  in  classes  
$\Hat L - \Hat E_{123}, \Hat E_1-\Hat E_2,$ and so on; that is, the
$C_i'$ have the same formulas as do the $C_i$ but with $L, E_j$ replaced by $\Hat L, \Hat E_j$. Further $\psi: X_7\to X_7$ descends to
a diffeomorphism $\psi_Z: Z\to Z'$. Thus we have the middle part of the diagram
\begin{equation}\labell{eq:diag}
\begin{array}{ccccccc} 
&&(X_7,J)&\stackrel{\psi}\to&(X_7,J')&&\\ 
&&\Phi_J\downarrow\;\;&&\;\;\downarrow\Phi_{J'}&&\\
Z^-&\stackrel{f}\leftarrow&Z&\stackrel{\psi_Z}\to&Z'&\stackrel{f'}\rightarrow &Z^+.
\end{array}
\end{equation}

We have constructed $(M^{\le 0}, \Om^-)$ so that there is a 
symplectomorphism  $f: (Z,\om_0)\to (Z^-,\om_0^-)$
such that $(f\circ \Phi_J)^*(e_Z(Y^-))=\chi_7$. Similarly, it follows from  equation (\ref{eq:eps}) that if we allow the classes $\Hat L, \Hat E_j$ (with Poincar\'e duals $\Hat a, \Hat e_i$) to play the roles of $L, E_j$
we can construct a diffeomorphism
$
f': Z'\to (Z^+,\om_0^+)$ such that 
$$ 
\Phi_{J'}^*\,(f')^*(e_Z(Y^+)) =\ts{\frac 1{12}}\bigl(6\Hat a - 2\Hat e_{123} - 3\Hat e_{4567}\bigr)
= -\chi_7.
$$
Denote by $\om_0': = (f')^*(\om_0^+)$ the corresponding symplectic form on $Z'$.   The symplectic forms $\psi_Z^*(\om_0')$ and $\om_0$ on $Z$ pull back to cohomologous forms on $X_7$ and hence are themselves
cohomologous; cf. the proof of Lemma~\ref{le:H2}.
 Therefore Proposition~\ref{prop:Zuniq}  provides a diffeomorphism $g:Z\to Z$ such that $g^*(\psi^*(\om_0')) = \om_0$.
Now take $\phi_Z$ to be the composite:
$$
(Z^-,\om_0^-)\stackrel{f^{-1}}\to (Z,\om_0)\stackrel{\psi_Z\circ g}\to
(Z',\om_0') \stackrel{f'}\to (Z^+, \om_0^+).
$$
This completes the proof for the case $\ell=4$.  The case $\ell=5$ is similar. 
\QED 

\subsection{Uniqueness.}\labell{ss:uniq}

It remains to prove 
the uniqueness statement. We first prove that the germ of $M$ around a critical level is unique.  Then, as in Gonzalez \cite{Gonz}, 
uniqueness will follow from the rigidity of the reduced spaces.

Since this is the only case needed here, we shall suppose that
the critical level $Y_0$ contains a single critical point $q$  with isotropy weights $(a_1,a_2,a_3)$ where
$a_1=-1$, and $a_2,a_3>0$.  As pointed out by Karshon--Tolman \cite{KT}, the difficulty is that the critical level $Y_0$ is not a smooth submanifold near $x_0$, and so its quotient $V_0$ by $S^1$ does not have a natural smooth structure near the image $p$ of $q$ (although
 $V_0$ is diffeomorphic to the reduced spaces $V_\ka, \ka<0,$
  at levels immediately below).
 We therefore  define the smooth structure on $V_0$ near $p$ by 
 choosing an equivariant Darboux chart for the smooth manifold $M$
  at $q$ modelled on the $S^1$ space $(\C^3,0)$ with action and
 moment map
 $$
  (z_1,z_2,z_3)\mapsto (e^{2\pi i a_1}z_1,e^{2\pi i a_2}z_2,e^{2\pi i a_3}z_3),\quad  (z_1,z_2,z_3)\mapsto\sum a_j |z_j|^2.
  $$
  (The symplectic form on $\C^3$ is an appropriate multiple of the standard form.) 
 The equivariant Darboux theorem implies that
 this chart, a baby version of the \lq\lq grommets" of \cite{KT}, is unique up to equivariant isotopy.
Moreover, because $a_1=-1$, the map $\C^2\to H^{-1}(0)\subset \C^3$ given by 
\begin{equation}\labell{eq:slice}
\rho\;:\;(w_2,w_3)\mapsto (\sqrt{a_2|w_2|^2+a_3|w_3|^2}, w_2,w_3)
\end{equation}
meets each orbit  in $H^{-1}(0)$ precisely once and hence provides a 
coordinate chart for a neighborhood of $p$ in $V_0$.  Putting this together with the natural (quotient) smooth structure on $V_0\less p$ 
we get a smooth structure on $V_0$ that is independent of choices. 
Further, the symplectic form $\Om$ on $M$ descends to a symplectic form $\om_0$ on $V_0$ that is again independent of choices.

\begin{lemma}\labell{le:sing}
Suppose we are given two Hamiltonian $S^1$ manifolds
$(M,\Om)$ and $(M',\Om')$ with proper moment maps $H,H'$, each having an isolated critical point
of index $(-1,a_2,a_3)$ at level zero. If
the critical reduced levels $(V_0,\om_0)$ and  $(V_0',\om_0')$
are symplectomorphic, then for some $\eps>0$ 
there is an equivariant symplectomorphism 
$$
\Psi: \bigl(H^{-1}(-\eps,\eps),\Om\bigr)\to \bigl((H')^{-1}(-\eps,\eps),\Om'\bigr).
$$
\end{lemma}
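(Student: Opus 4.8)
The plan is to reconstruct the germ of the Hamiltonian $S^1$-manifold at the critical level from two canonical pieces of data: the equivariant normal form at the isolated fixed point $q$, which is pinned down once we know the weights $(-1,a_2,a_3)$ and the overall scale of $\Om$, and the regular slices away from $q$, which are controlled by Lemma~\ref{le:reg}. The given symplectomorphism $(V_0,\om_0)\to(V_0',\om_0')$ is the only global input tying these together. First I would fix the scale: in the chart $\rho$ of~(\ref{eq:slice}) the form $\om_0$ near $p$ is the standard form $c\,\om_{std}$ on $\C^2$ for a constant $c$ read off from $\Om$, and since $\phi$ is a symplectomorphism onto $(V_0',\om_0')$ the corresponding constant for $M'$ equals $c$. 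Hence I may use the one model $(\C^3,\,c\,\om_{std},\,H_{std})$, with $H_{std}=-|z_1|^2+a_2|z_2|^2+a_3|z_3|^2$, at both critical points. By the equivariant Darboux theorem recalled before the lemma there are equivariant, moment-map-preserving symplectomorphisms $\Theta$ and $\Theta'$ from neighborhoods of $q$ and $q'$ onto a neighborhood of $0$ in this model, each unique up to equivariant isotopy.

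Next I would match $\phi$ to the local models. The charts $\Theta,\Theta'$ descend through $\rho$ to symplectic charts of $(V_0,\om_0)$ near $p$ and $(V_0',\om_0')$ near $p'$, both identified with a fixed neighborhood of $0$ in $(\C^2,\,c\,\om_{std})$; in these coordinates the equivariant map $\Psi^0:=(\Theta')^{-1}\circ\Theta$ covers some germ $h$ of a symplectomorphism of $(\C^2,0)$ fixing $0$, whereas $\phi$ covers another such germ. The discrepancy $\phi\circ h^{-1}$ is a germ $g$ of a symplectomorphism of $(\C^2,0)$ fixing the origin. The key local step is that any such $g$ lifts to an equivariant, $H_{std}$-preserving symplectomorphism $G$ of $(\C^3,0)$ inducing $g$ on every reduced level $H_{std}^{-1}(\ka)/S^1$; because $a_1=-1$ the slice $\rho$ trivializes the quotient, and the lift may be built by an equivariant Moser argument over the levels, as in the ``grommet'' constructions of Karshon--Tolman \cite[\S3]{KT}. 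Replacing $\Theta'$ by $G^{-1}\circ\Theta'$, I may assume that $\Psi^0$ is an equivariant symplectomorphism of a neighborhood of $q$ onto one of $q'$ that covers $\phi$ near $p$.

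It then remains to extend over the regular part. Away from $q$ the slice $H^{-1}(-\eps,\eps)$ contains no critical points, so by Lemma~\ref{le:reg} it is determined, as an equivariant symplectic manifold, by the family of reduced forms $\om_\ka$ on $V_\ka\setminus\{p\}$. Here $\phi$ extends to maps $\phi_\ka:V_\ka\to V_\ka'$ for $|\ka|$ small: away from $p$ each $V_\ka$ is canonically $V_0\setminus p$, where the Duistermaat--Heckman formula gives $[\om_\ka]=[\om_0]-\ka\,e(Y)$ with the same Euler class on both sides (since $e(Y)$ comes from the connection on the circle bundle, hence from the germ), while near $p$ the family is exactly the one read off the local model, which $\Psi^0$ already matches. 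Feeding these matched families into the uniqueness clause of Lemma~\ref{le:reg}, relative to $\Psi^0$ on the overlapping annular slice, produces the desired equivariant $\Psi$ on all of $H^{-1}(-\eps,\eps)$ after possibly shrinking $\eps$.

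The hard part will be the interface. I must glue $\Psi^0$, defined near the non-smooth point $q$, to the map supplied by Lemma~\ref{le:reg} on the regular region, and a priori these agree only up to an equivariant symplectic isotopy of the overlap that respects the moment map. Executing this requires a relative equivariant Moser argument whose interpolating isotopy is supported away from $q$, together with the verification that the whole family $\om_\ka$ really is determined, not merely up to isotopy, by $(V_0,\om_0)$ and the weights for $\ka$ in a small interval; this is precisely where the rigidity of the local normal form and the linearity of the Duistermaat--Heckman evolution are essential, and I would carry it out following the framework of \cite[\S3]{KT}.
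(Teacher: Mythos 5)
Your skeleton (normal form at $q$, reconstruction of the regular part, final Moser correction) is sensible, and your ``key local step'' is in fact true --- a symplectomorphism germ of $(\C^2,0)$ fixing the origin is the time-one map of a Hamiltonian isotopy, and lifting the generating Hamiltonian to an $S^1$-invariant function on $(\C^3,0)$ gives an equivariant, $H_{std}$-preserving lift (the paper avoids even this by isotoping $\psi_0$ to be the \emph{identity} in the standard coordinates near $p$, after which the chart comparison $(\chi')^{-1}\circ\chi$ is itself the required lift near $q$). The genuine gap is in your third and fourth paragraphs, exactly where the work lies. The uniqueness clause of Lemma~\ref{le:reg} requires the two families of reduced forms to correspond as \emph{forms} under a single identification of the reduced spaces, not merely in cohomology; but the hypothesis of Lemma~\ref{le:sing} gives a symplectomorphism at the one level $\ka=0$ only, and for $\ka\ne 0$ the families $\si_\ka$ and $\si'_\ka$ are independent data: they are governed by the curvatures of the connection $1$-forms, which are \emph{not} determined by $(V_0,\om_0)$ and the weights. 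So the verification you defer (``the whole family $\om_\ka$ really is determined by $(V_0,\om_0)$ and the weights'') would in fact fail. Your justification that the Euler classes match is circular for the same reason: $e(Y)$ and $e(Y')$ do come from the respective germs, but those germs are precisely what the lemma is trying to identify, so nothing guarantees $\phi^*e(Y')=e(Y)$ a priori. Finally, even granting all this, the gluing of $\Psi^0$ to the map on the regular part is not a routine relative Moser argument: the discrepancy is an equivariant, level-preserving symplectomorphism of an annular region around the stable/unstable set, and cutting such a map off is obstructed in general (e.g.\ by winding of the fiberwise rotation around the puncture).

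The paper's proof contains the one device that makes all of this unnecessary, and it is what your proposal is missing. After lifting the (normalized) $\psi_0$ to an equivariant map $\Psi_0:Y_0\to Y_0'$ with $\Psi_0^*\Om'=\Om$ on the whole critical level, it extends $\Psi_0$ off $Y_0$ in a single stroke using the gradient flows of $H$ and $H'$: setting $\Psi:=(\Ff_\ka')^{-1}\circ\Psi_0\circ\Ff_\ka$ on $H^{-1}(\ka)$ outside the Darboux chart and $\Psi:=(\chi')^{-1}\circ\chi$ inside it. This $\Psi$ is smooth, equivariant and level preserving, is symplectic in the chart and along $Y_0$, and since the germ retracts to $Y_0$ the form $\Psi^*\Om'-\Om$ is exact and vanishes on $Y_0$; one equivariant Moser isotopy, the identity near $q$, then makes $\Psi$ symplectic (equivariance automatically restores compatibility with the moment maps). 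There is no separate treatment of the regular part, no matching of families of reduced forms, and no gluing. I would rework your argument around this gradient-flow extension; your local normalization can be retained, or replaced by the simpler isotopy of $\psi_0$.
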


\begin{proof} We shall first lift the given symplectomorphism $\psi_0$ from the critical reduced space $V_0$ to the critical level $Y_0$, and then extend this lift $\Psi_0$  to a symplectomorphism defined on a neighborhood of $Y_0$ by using a modified gradient flow.
 
For the first step,
choose a Darboux chart 
$\chi:U\to U_0$ from  a neighborhood 
$U$ of the fixed point $q\in Y_0\subset M$ to a neighborhood $U_0$
of $0$ in the standard model $\C^3$ described above.
Make a similar choice $\chi':U'\to U_0$ for $M'$.
Isotop the given  symplectomorphism $\psi_0:(V_0,\om_0)\to (V_0',\om_0')$
so that it is the identity in the standard coordinates near the critical points $p, p'$.  More precisely,
with $\rho$ as in equation (\ref{eq:slice}), 
arrange that 
$$
(\rho^{-1}\circ\chi')  \circ\psi_0\circ(\chi^{-1}\circ\rho)\,(w_2,w_3) = (w_2,w_3).
$$
  Then $\psi_0$ lifts to
$(\chi')^{-1}\circ \chi$ in $U\cap Y_0$.  Since there are no fixed points except for $q,q'$, and since $\psi_0$ is a symplectomorphism,
one can show as in Lemma~\ref{le:reg}
that this local lift extends to an equivariant  map $\Psi_0:Y_0\to Y_0'$ such that $\Psi_0^*(\Om') = \Om$.
 
  \begin{figure}[htbp] 
    \centering
    \includegraphics[width=2in]{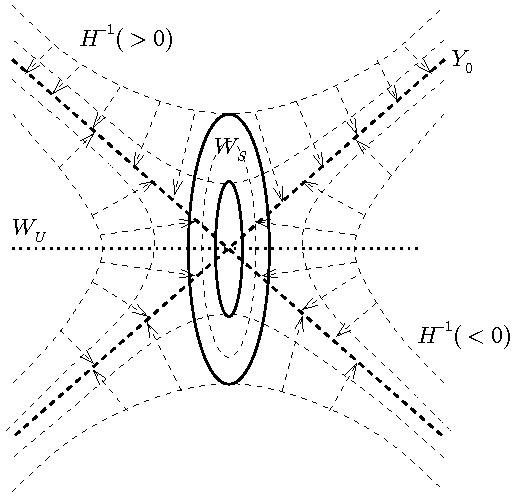} 
    \caption{The maps $\Ff_\ka$ in $U$. The stable manifold $W_S$ is 
    represented by some circles, the critical level $Y_0$ by a pair of  heavy dashed lines, and the unstable manifold $W_U$ by a 
    heavy dotted line.}
    \label{fig:grad}
 \end{figure}

To extend $\Psi_0$ further, 
 choose an invariant $\Om$-compatible almost complex structure $J$ on $M$ that equals the standard  almost complex structure $\chi^*(J_0)$ on $U$, and let $g$ be the corresponding metric. 
Consider the downwards $g$-gradient flow  of $H$ on 
$H^{-1}([0,\eps))$. 
If we choose 
 $\eps>0$ sufficiently small,  we  may suppose that
 $U\cap H^{-1}([0,\eps))$ 
contains all orbits in $H^{-1}([0,\eps))$ 
whose downward flow converges to $q$.  (These  points form the $4$-dimensional stable manifold $W_S$ of $q$ and lie above  
the exceptional divisors in the reduced spaces.)
For each $\ka\in [0,\eps)$ define $\Ff_\ka(x)$ to be
$q$, if $x\in W_S$, and otherwise to be the point where the downward
gradient flow line through $x$ meets $Y_0$. Thus $\Ff_\ka:(Y_\ka,W_S)\to (Y_0,q)$
induces a diffeomorphism $Y_\ka\less W_S\to Y_0\less q$.  Similarly, if $\ka < 0$ define
$\Ff_\ka:(Y_\ka,W_U)\to (Y_0,q)$ by using the upward gradient flow. (Here $W_U$ is the $2$-dimensional unstable manifold of $q$.)

Define $\Ff_\ka'$ 
similarly on $M'$, and then consider the map $\Psi$ that is defined near $Y_0$ by
$$
\Psi(x): =\left\{\begin{array}{ll}(\Ff_{\ka}')^{-1}\circ\Psi_0\circ \Ff_{\ka}(x),& \mbox{ if }
x\in H^{-1}(\ka)\less U\;\mbox{ for some }|\ka|<\eps,\\ (\chi')^{-1}\circ \chi,& \mbox{ if } x\in U. \end{array}\right.
$$
It is easy to check that $\Psi$ is smooth and equivariant.  Moreover, it is a symplectomorphism in $U$ and preserves the symplectic form on   $Y_0$.  Hence a standard Moser argument shows that it can be equivariantly isotoped, by an isotopy that  
is the identity near $q$,   
to an equivariant  symplectomorphism defined near $Y_0$.\end{proof}

\NI {\bf Proof of Proposition \ref{prop:ell} part (ii)}. 
It follows from Theorem~\ref{thm:rigid} that the gluing map 
$\phi_Z$ in diagram (\ref{eq:diag}) is unique up to symplectic isotopy.  Hence $(M,\Om)$ will be unique (up to equivariant symplectomorphism) provided that
the sublevel set $(M^{\le 0},\Om^-)$ is.  
By Lemma 3.7 in \cite{Gonz}, the rigidity of the reduced levels of $M$ implies that, for $I = [-6,-\ell)$ and $I= (-\ell,0]$, 
any two families $\om_\ka, \om_\ka'$, $\ka\in I,$ of symplectic forms with $[\om_\ka]=[\om_\ka']$ for all $\ka$ are  isotopic through such families.  
Hence, Lemma~\ref{le:reg}  imply that the slices $M^{[-6,-\ell)}$ and
$M^{(-\ell,0]}$ have a unique structure.
But the germ of $M$ around the critical level $\ka = -\ell$ is unique by Lemma~\ref{le:sing}.
Therefore the result follows because the maps that glue these pieces together are also unique up isotopy. 
\QED

\subsection{The cases $\ell=2,3$.}\labell{ss:23}

  When $\ell= 2,3$ there is a similar resolution $\Phi_J:X_k\to Z, k=\ell+3$. 
When $k=5$ we may assume that 
the automorphism $\psi:(X_k,\om)\to(X_k,\om)$
takes $L,E_i$ to
\begin{eqnarray*}
\HL&=&3L-E_{1\dots4}-2E_5,\\ 
\HE_i&=& L-E_{j_i5},\;\;\mbox{ where } j_i = 4-i \mbox{ for } 1\le i\le 3, \mbox{ and } j_5=4;\\
\quad \HE_4&=&2L-E_{1\dots5}.
\end{eqnarray*}
Moreover, as in equation (\ref{eq:phie}), $\Phi_J^*([\om_\ka])$ must vanish on $L-E_{123}$  while it is determined on
$E_3$ and $E_5$ by equation (\ref{eq:omZ}).  Hence
\begin{eqnarray*}
\Phi_J^*([\om_\ka]) &=& 
\ts{(1+\frac{\ka}6)\bigl(3a-e_1-e_2-e_3\bigr) - (1+\frac{\ka}2)(e_4+e_5).}
\end{eqnarray*}

\begin{figure}[htbp] 
   \centering
   \includegraphics[width=6in]{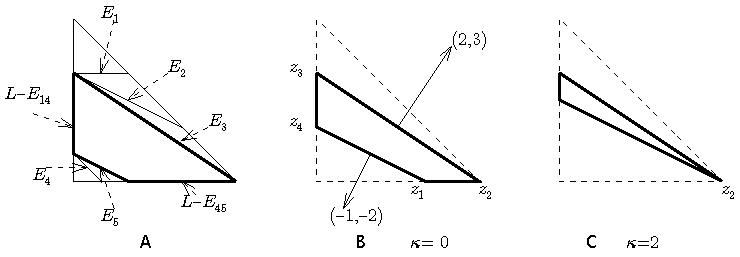} 
   \caption{These are diagrams of the moment polytope of the reduced spaces $(Z,\om_\ka)$ 
   for different $\ka$ in the case $\ell=2$.
    (A) is a schematic representation of $Z$ as a blow up for $\ka$ just larger than $-2$.  (cf. \cite{M});  
  (B) shows the level $\ka=0$ in which 
 $z_1$ is smooth, and  $z_2,z_4$ have order $2$; 
 (C) shows the critical level $\ka = 2$ with $z_2$  smooth.}
   \label{fig:12}
\end{figure}

We have chosen these formulas as in Lemma~\ref{le:J7} so that, 
in the notation introduced there, $\Cc_0=\Cc_0'$ and 
the curves in classes $\HE_3 = L-E_{14}$ and $ \HE_5=L-E_{45}$ 
are represented as well those in classes $E_3, E_5$. (Cf. Figure \ref{fig:12}.)
As $\ka$ increases to the critical level $\ka=2$ 
the area of the curve representing
$\HE_5=L-E_{45}$ 
shrinks to zero.  Hence at this critical level 
the regular point $z_1$ of intersection of $C_{L-E_{45}}$ with
$D_2: = \C P\,\!^1_{12}$ \lq\lq cancels" the singular  point 
$z_2$ on $D_1$.
Thus, going back to the manifold $M_2$, there are isotropy spheres of order $2$ between the critical points $x_4$ and $x_2$ at levels $-6$ and $2$ respectively and between the points $x_3$ and $x_1$ at levels $-2$ and $6$
respectively.
This should be contrasted with the situation
 when $\ell = 4$ or $5$; cf. Figure \ref{fig:1}.  
\begin{figure}[htbp] 
   \centering
   \includegraphics[width=2.5in]{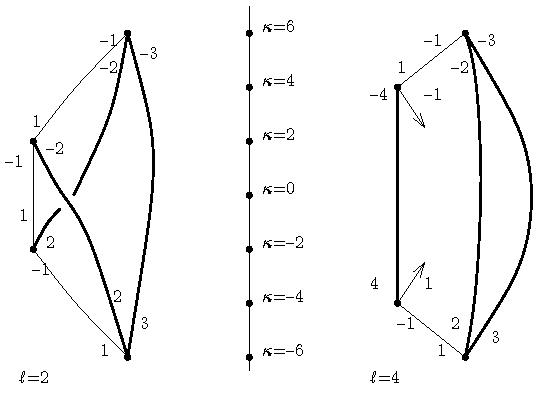} 
   \caption{Diagrams of the $S^1$ invariant gradient spheres
   (i.e. gradient flow lines of  $H$ with respect to an invariant metric)  when $\ell=2,4$. 
   The isotropy spheres are marked with thicker lines. 
     When $\ell=2$ the diagram is a projection of the edges of a $3$-simplex because one can assume that the metric is invariant under a $R^3$ action.}
   \label{fig:1}
\end{figure}

When $\ell=3$ the analogous formulas are
\begin{gather}\notag
\HL = 4L-2E_{123}-E_{456},\quad
\eps_6 =3L-E_{123} -2E_{456},\\\notag
\HE_i = L-E_{jk}, \mbox{ for }\{i,j,k\}=\{1,2,3\},
\quad \HE_i = 2L-\sum_{j\ne i}E_j \mbox{ for }i=4,5,6,\\\notag
\Phi_J^*([\om_\ka]) =\ts{(1+\frac \ka 6)\bigl(3a-e_1-e_2-e_3\bigr) - 
(1+\frac \ka 2)\bigl(e_4+e_5+e_6\bigr)}
\end{gather}
There is no toric model in this case because $(Z,\om_\ka),|\ka|<\ell,$ is constructed by embedding the ellipsoid $\la E(1,\ell)$ into $E(2,3)$ for
$\la< \frac {12}{6+\ell}$ and when $\la>3/\ell$ there is no linear  embedding that does this. However, because $M_3=\Tilde G_\R(2,5)$ supports a Hamiltonian $T^2$-action, there should be an $S^1$-equivariant embedding $\la E(1,3)\to E(2,3)$ for all $\la<\frac 43$.  Note that as in the case $\ell=2$ there is an isotropy sphere (of order $3$ this time) between
the points $x_1$ and $ x_3$ and between $x_2$ and $x_4$.   This was 
noticed by Tolman in \cite{T}; one can check it by calculating the Chern 
class of the isotropy spheres. (Recall that the Chern class of the $S^1$ orbit of 
a gradient flow line from $q$ to $q'$ is the difference  in the sum of the isotropy weights at $q$, $q'$; cf. \cite{MT} for example.)

\section{Complex structures on $M_\ell$.}\labell{ss:com}

Suppose that $J$ is a $\C^*$ invariant complex structure on 
a complex manifold $M$, choose  a K\"ahler metric on $M$ that is invariant under the associated $S^1$ action, and consider the corresponding Hamiltonian function $H$.  Then  the reduced space at a regular level $\ka$  of $H$ can be identified with the quotient
$U_\ka/\C^*$ where $U_\ka$ is the union of all $\C^*$ orbits that intersect the level set $H^{-1}(\ka)$. Since $U_\ka$ 
changes only when $\ka$ passes a critical value, 
 the induced complex structure on the reduced spaces
is constant in each interval $I$ of regular values.  
Next observe that the regular levels $H^{-1}(\ka),\ka\in I,$ fit together
to form a subset $\Ss$  of a holomorphic line orbibundle $\Ll\to Z$ whose fibers 
over the points of $Z$ are (varying) annuli.  (This holds because the fibers of the (holomorphic) projection $H^{-1}(I)\to Z$ support an $S^1$ action that extends to a local $\C^*$ action.)   
Thus $M$ can be considered as a completion of $\Ll\less\Ll_0$, where $\Ll_0$ denotes the zero section of $\Ll$.  Therefore, we will  approach
the  construction of a $\C^*$-invariant complex structure on $M_\ell$
by first finding  a suitable complex structure $J$ on $Z$ and then a suitable orbibundle $\Ll$.   
As usual, we construct $J$ on $Z$ by finding a suitable complex structure on 
the resolution $X_k$.

To do this, it is convenient to change our point of view, thinking of the gluing map $\psi$ of diagram (\ref{eq:diag}) as being the identity map, and the induced map  $L,E_i\mapsto \HL,\HE_i$ (or $\TL,\TE_i$)  as corresponding to a different choice of basis for $H_2(X_k)$.
Moreover the fact that $\psi$ reverses the Euler class  $\chi_k$ translates into the fact that
the formula expressing the  class $\eps_k = PD(\chi_7)$  in terms of the 
first basis should be equal, apart from a sign change, to that
expressing it in terms of the second basis. 

For clarity we shall now denote the second set of classes 
$\HL,\HE_i$ or $\TL,\TE_i$ by $L', E_i'$.  We show below (in Lemma~\ref{le:al}) that it is possible to choose the homology classes $L', E_i'$ so that
the set 
$$
  \Hh'_0:= \{L'-E'_{123},E'_i-E'_{i+1}, i\ne 3,k\}\subset H_2(Z;\Z)
$$ 
coincides with
 $$
 \Hh_0= \{L-E_{123},E_i-E_{i+1}, i\ne 3,k\}.
 $$
Therefore, if these classes have smooth $J$-holomorphic representatives, we can think of the complex space $(Z,J)$ as obtained either by contracting the curves in $\Hh_0$ or those in $\Hh_0'$.  If the complex structure $J$ on $X_7$ also has the property that  the classes 
$E_3, E_k$ and $E_3', E_k'$ have smooth  representatives, then we can identify $(Z,J)$
with the weighted blow up $\C P^2_{1,2,3}\#\ov {\C P}\,\!^2_{1,1,\ell}$ in two ways, identifying the divisors $D_1= \C P^1_{2,3}$ and $D_2=
{{\C P}}\,\!^1(1,\ell)$ either with the images of  $E_3$ and $E_k$ or with the images of $E_3'$ and $E_k'$. We show in Proposition~\ref{prop:com4} below how these ideas lead to a construction of $M_\ell$ as a complex manifold. When there is no danger of confusion we shall sometimes write $E_i$ for the (unique) $J$-holomorphic representative in class $E_i$.

The first step is to find suitable homology classes for $L',E_i'$.  We shall do this first for the case  $k=7$.  In this case, define
\begin{equation}\labell{eq:Cc'}\begin{array}{rcl}
E_i':& = &2L-E_{j4567}, \mbox{ where } (i,j) = (1,3), (2,2), (3,1),\\
E_i': &=& 3L-2E_j-\sum_{m\ne j} E_m, \;\mbox{ where } (i,j) = (4,7),(5,6),(6,5),(7,4)\\
L': &=& 7L-2E_{123}-3E_{4567}, \\ \eps_7': &=& \ts{\frac 1{12}}(6L'-2E_{123}' -3E'_{4567})
 \end{array}
 \end{equation}

 The proof of the next lemma is left to the reader. Note that
 the  somewhat complicated labelling of the $E_i'$ was chosen so that each $E_i'-E_{i+1}' $ in $\Hh_0'$  equals some $E_j-E_{j+1}$ in $\Hh_0$.

  \begin{lemma}\labell{le:al} {\rm (i) }   There is an automorphism $\al$ of $H_2(X_7)$ that takes $L,E_i$ to $L',E_i'$ respectively.  Moreover $\eps_7'=\al(\eps_7) = -\eps_7$.\SSS
 
 \NI{\rm (ii)} $\Hh_0 = \Hh_0'$.
  \end{lemma}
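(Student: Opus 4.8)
The plan is to verify both parts by direct linear algebra in the basis $L,E_1,\dots,E_7$ of $H_2(X_7;\Z)$, using only the relations $L^2=1$, $E_i^2=-1$ and $L\cdot E_i=E_i\cdot E_j=0$ for $i\ne j$. First I would \emph{define} $\al$ to be the unique $\Z$-linear endomorphism of $H_2(X_7)$ with $\al(L)=L'$ and $\al(E_i)=E_i'$; this is well defined precisely because $L,E_i$ is a basis. To see that $\al$ is an automorphism preserving the intersection form, it then suffices to check on generators that $(L')^2=1$, $(E_i')^2=-1$, $L'\cdot E_i'=0$ and $E_i'\cdot E_j'=0$ for $i\ne j$; for example $(L')^2 = 49 - 4\cdot 3 - 9\cdot 4 = 1$ and $E_1'\cdot E_2' = 4 - 4 = 0$. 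Once the form is preserved, writing $A$ for the matrix of $\al$ and $G$ for the Gram matrix, the relation $A^\top G A = G$ gives $\det(A)^2=1$, so $\det A = \pm1$ and $\al\in GL(8,\Z)$; hence $\al$ is an automorphism. As a byproduct one checks $\al(K)=K$, so in fact $\al\in\Aut_K(X_7)$, consistently with Proposition~\ref{prop:psi}.

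For the assertion about $\eps_7$ in (i), the identity $\eps_7'=\al(\eps_7)$ is immediate from linearity, since $\al$ carries $\tfrac1{12}(6L-2E_{123}-3E_{4567})$ to $\tfrac1{12}(6L'-2E_{123}'-3E_{4567}')$, which is the defining formula for $\eps_7'$. To get $\al(\eps_7)=-\eps_7$ I would compute the two auxiliary sums in the standard basis:
\[
E_{123}' = \sum_{i=1}^3 E_i' = 6L - E_{123} - 3E_{4567},\qquad
E_{4567}' = \sum_{i=4}^7 E_i' = 12L - 4E_{123} - 5E_{4567}.
\]
Substituting these together with $L'=7L-2E_{123}-3E_{4567}$ yields $6L'-2E_{123}'-3E_{4567}' = -(6L-2E_{123}-3E_{4567})$, whence $\eps_7'=-\eps_7$.

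For (ii), I would simply evaluate the six elements of $\Hh_0'$ in the original basis. The two ``$L$-type'' classes agree, $L'-E_{123}' = L-E_{123}$, and the remaining five differences collapse to differences of the $E_i$: one finds $E_1'-E_2'=E_2-E_3$, $E_2'-E_3'=E_1-E_2$, $E_4'-E_5'=E_6-E_7$, $E_5'-E_6'=E_5-E_6$ and $E_6'-E_7'=E_4-E_5$, each using the identity $\sum_{m\ne a}E_m - \sum_{m\ne b}E_m = E_b-E_a$. Thus the six elements of $\Hh_0'$ are exactly the six elements of $\Hh_0$, merely relabelled, so $\Hh_0=\Hh_0'$; this is precisely the effect of the deliberate reindexing of the $E_i'$ noted before the lemma.

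I expect no genuine obstacle here: the content is pure bookkeeping. The most laborious step is confirming the $8+\binom{8}{2}$ intersection numbers in part (i), though each reduces to the same cancellation of the $L^2$-contribution against the $-1$'s coming from the $E$-components, as the two sample computations above illustrate; the determinant argument then supplies invertibility for free. One could instead exhibit $\al$ as a word in the permutations and Cremona reflections $R_{ij\ell}$ of Proposition~\ref{prop:psi}, but the direct verification is cleaner.
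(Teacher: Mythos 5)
Your proof is correct, and it is exactly what the paper intends: the paper explicitly leaves this lemma to the reader as routine verification (noting only that the labelling of the $E_i'$ was chosen so that each $E_i'-E_{i+1}'$ equals some $E_j-E_{j+1}$), and your direct computations in the basis $L,E_1,\dots,E_7$ --- checking the Gram matrix on generators, the cancellation giving $\eps_7'=-\eps_7$, and the six identities showing $\Hh_0'=\Hh_0$ --- all check out. The observation that $\al$ moreover preserves $K$, hence lies in $\Aut_K(X_7)$, is a worthwhile addition since it is what connects the lemma to Wall's realization result used in Proposition~\ref{prop:psi}.
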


\begin{lemma}\labell{le:J7}    
{\rm (i)}  There is a complex structure $J$ on $X_7$ 
for which the classes $L,E_3,E_7,$ $E_3',E_7'$ as well as those in 
$\Hh_0$ have smooth holomorphic representatives.
\SSS

\NI
{\rm (ii)} This $J$ is unique up to biholomorphism.\SSS
   \end{lemma}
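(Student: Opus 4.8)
The plan is to realize $(X_7,J)$ as the minimal resolution of a degree-two weak del Pezzo surface, to read off the five distinguished curves from the anticanonical geometry, and to deduce uniqueness from the rigidity of the underlying point configuration in $\C P^2$.

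For existence I would construct $J$ as a blow-up of $\C P^2$ in which the triple point $p$ is directed by a line $C_L$ (so that $C_0=L-E_{123}$ is holomorphic, exactly as in the resolution $\Phi_J$), but in which the four-fold point $q$ is directed by a \emph{smooth conic} $Q$ through $p$, rather than by a line as in the non-nef toric model $J_T$ of Remark~\ref{rmk:ZT}. The proper transform of $Q$ then lies in the class $2L-E_1-E_{4567}=E_3'$, so $E_3'$ is automatically represented by a smooth rational curve, while $L$, $E_3$, $E_7$ and all the classes in $\Hh_0$ are holomorphic by construction. The decisive point of directing $q$ by a conic is that no line can meet $Q$ to fourth order, so the $-3$-class $L-E_{4567}$ --- the class responsible for the failure of nef-ness in the toric case --- is not effective; a short check of the remaining candidates then shows that $-K$ is nef, i.e. that $(X_7,J)$ is a genuine weak del Pezzo of degree two whose $(-2)$-curves form the $A_1+A_2+A_3$ configuration $\Hh_0$.

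It remains to produce $E_7'$, and here I would use the standard fact that on a weak del Pezzo with $-K$ nef a class $E\in\Ee(X_7)$ has a smooth irreducible $(-1)$-representative precisely when $E\cdot C\ge 0$ for every $(-2)$-curve $C$. A direct computation gives $E_7'\cdot C=0$ for all $C\in\Hh_0$ except $E_7'\cdot C_4=1$, and likewise $E_3'\cdot C\ge 0$ throughout, so both primed classes are holomorphic and part~(i) follows. (Equivalently, the generalized Geiser involution $\al$ of Lemma~\ref{le:al} is realized biholomorphically on this symmetric surface, so that $E_3'=\al(E_3)$ and $E_7'=\al(E_7)$ are images of holomorphic curves; but the effective-cone criterion is cleaner and sidesteps the realizability of $\al$.) For uniqueness I would run the construction backwards: given any $J$ as in~(i), successively contract $E_7$ and the chain $C_6,C_5,C_4$ over $q$ and then $E_3,C_2,C_1$ over $p$ to obtain a blow-down $(X_7,J)\to\C P^2$ under which $C_0$ and $E_3'$ descend to a line $\ell_p$ through $p$ and a conic $Q$ through $p$ and $q$ directing the two chains. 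Thus every admissible $J$ comes from the configuration $(p,\ell_p,q,Q)$, whose parameter count is exact --- $p$ ($2$), a line through $p$ ($1$), $q$ ($2$) and a conic through $p,q$ ($3$) total $8=\dim\mathrm{PGL}(3,\C)$ --- so the configuration is rigid; equivalently, within the $6$-dimensional moduli of degree-two del Pezzo surfaces the stratum on which the $(-2)$-curves span a root lattice of type $A_1+A_2+A_3$ has codimension $1+2+3=6$, hence is zero-dimensional.

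The main obstacle is turning this rigidity into genuine uniqueness up to biholomorphism: the counts only show the moduli is zero-dimensional, and I must verify that it is a single $\mathrm{PGL}(3,\C)$-orbit --- concretely, that the three-dimensional stabilizer of $(p,\ell_p,q)$ acts with a dense orbit on the three-dimensional family of conics through $p,q$, and that this generic orbit is the admissible (nef) one. I expect this transversality and single-orbit step to carry the real content, and to be where the exceptional rigidity of $V_5$ --- ultimately its $SL(2,\C)$-symmetry --- enters; the same count in degree one, where the $8$-dimensional moduli meets the codimension-$1+2+4=7$ stratum of type $A_1+A_2+A_4$ in a one-parameter family, matches the rational parameter of Lemma~\ref{le:J8}.
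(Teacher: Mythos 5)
Your part (i) is essentially correct but takes a genuinely different route from the paper's. You fix a generic configuration $(p,R,q,Q)$ first --- three blow-ups at $p$ directed by the line $R$, four at $q$ directed by a smooth conic $Q$ through $p$ --- so that $E_3'$ is represented tautologically, and you then obtain $E_7'$ abstractly from the weak del Pezzo criterion that a $(-1)$-class is irreducibly represented exactly when it meets every $(-2)$-curve non-negatively. The paper works in the opposite order: it normalizes the blow-down of $E_7'$, a nodal cubic $T$ with node at $q$ and a flex at $p$ with flex tangent $R$, and then produces $Q$ as a limit of conics through $p$ and four points colliding along a branch $B$ at $q$, ruling out degeneration into a line pair because the nodal tangents of $T$ are triple tangents missing $p$. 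Your route is more systematic (it also explains Lemma~\ref{le:J8}, as your degree-one count indicates), but it does require the genericity check that $\Hh_0$ exhausts the $(-2)$-curves, since the criterion quantifies over all of them, not only over $\Hh_0$; nef-ness itself is immediate in your setting, because $-K$ is the effective divisor $C_0+Q'+(E_1-E_2)+(E_2-E_3)+E_3$ (with $Q'$ the proper transform of $Q$), which meets each of its components non-negatively.

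The genuine gap is in part (ii), and you flag it yourself. The parameter count $2+1+2+3=8=\dim\mathrm{PGL}(3,\C)$ (equivalently, codimension $6$ in the $6$-dimensional moduli of degree-two del Pezzo surfaces) shows at best that the admissible locus is zero-dimensional; it does not show it is a single $\mathrm{PGL}(3,\C)$-orbit, which is exactly what uniqueness up to biholomorphism means, and a zero-dimensional locus can perfectly well consist of several points. The step can in fact be closed along your lines, and no transversality is needed: admissibility of $J$ forces the genericity of the blown-down configuration ($q\notin R$ because $L-E_{123}$ is irreducible; $Q$ not tangent to $R$ at $p$ because the $E_2$-coefficient of $E_3'$ vanishes; the line $pq$ not tangent to $Q$ at $q$ because otherwise $L-E_{145}$ would be effective and would meet $E_7'$ negatively), and for such a configuration the stabilizer of $(p,R,q)$ acts on $Q$ with trivial isotropy, since a projectivity preserving the smooth conic $Q$ and fixing the three points $p$, $q$ and the second point of $R\cap Q$ is the identity. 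Hence every admissible $Q$ has a full-dimensional, therefore open, orbit in the irreducible $\PP^3$ of conics through $p,q$, and two distinct open orbits would be disjoint dense open sets, which is impossible; so all admissible configurations lie in one orbit. The paper avoids this orbit analysis entirely by anchoring the normalization at $T$: all nodal cubics are projectively equivalent, the automorphisms of $T$ permute its three flexes transitively, and a reflection interchanges the two branches at the node, so $(T,p,q,B)$, and with it $J$, is unique outright. Without one of these completions, your part (ii) is not yet a proof.
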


\begin{proof}  
If  $J$ is a complex structure satisfying 
  the hypotheses of (i), then we may  successively blow down $E_3,E_2,E_1$ to a point $p$ and also $E_7,E_6,E_5,E_4$ to a point $q$.  The blow down manifold is diffeomorphic to $\C P^2$ with its unique complex structure.  
  This blow down map takes $L-E_{123}\in \Hh_0$  to a line through $p$ that we shall call $R$. 
Further, it takes the embedded curve $E_7'$  to an immersed
 cubic $T$ with a node at $q$ that is triply tangent to 
 $R$ at $p$. Thus $p$ is a flex point on $T$. Further the curve in class $E_3'$ is taken to a conic $Q$ through $p$ that 
 has a four-fold tangency to $T$ at $q$.  

We claim that, up to projective transformation, there is at most one configuration of this kind.  To see this, note that given $T$ and a choice of flex point $p$, the conic $Q$ is determined by the further choice of a branch $B$ of $T$ at its unique node $q$. But there is a unique choice
of $T,p,q,B$ up to projective transformation.  In fact, 
because all nodal cubics are projectively equivalent,
we may suppose that $T$ is given by the equation $F=0$ where $F = z_3(z_1^2 - z_2^2) -z_1^3$
  with node at $q = [0:0:1]$.  Its three flex points are the points on $T\less \{q\}$ where the Hessian $\left|\frac{\p^2F}{\p z_i\p z_j}\right|$ vanishes, and one can check directly that these are permuted transitively by projective transformations that preserve $T$.  Therefore we may take   
$p=[0:1:0]$ (where $T$ is triply tangent to the line at infinity $z_3=0$).
Note finally the reflection $[z_1:z_2:z_3]\mapsto [-z_1:z_2:z_3]$
interchanges the two branches at $q=[0:0:1]$.

Since there is a unique blowing up process that converts $T$ and $Q$ to curves in $X_7$ in classes $E_7', E_3'$, this proves (ii).
To prove (i) it remains to
 check that there is a configuration of curves $T,Q$ with the required properties.  But given $T$ as above,
    let $Q$ be the unique conic that intersects the branch $B$ to order $4$ at $q$ and also intersects $T$  at  $p$. To see such $Q$ exists, consider the family of conics through the points $p, q=x_4,x_5, x_6,x_7$ where $x_i\in T$, and let the three points $x_5,x_6,x_7,$ converge along the branch $B$  to $q$.  Then the limiting degree $2$ curve intersects $T$ at $q$ to 
    order $5$ and so cannot degenerate into a pair of lines. 
    (The two lines would have to consist of tangents to $T$ at the node $q$, but these are triple tangents and so do not also
     go through $p\in T$.)
  \end{proof}

  When $\ell=5$ we argue similarly, using the formulas:
\begin{gather}\labell{eq:Cc'8}
L': = 16L-5E_{123}-6E_{4\dots8}, \;\;\;
\eps_8' = \ts{\frac 1{30}}(15L'-5E_{123}' -6E'_{4\dots8})\\ \notag
E_i': = 5L-E_{jk}-2E_{i4\dots8}, \;\mbox{ for } \{i,j,k\} =\{1,2,3\},\\\notag
E_i': = 6L-3E_j-2\sum_{m\ne j}E_m, 
\;\mbox{ for } i,j\in \{4,\dots,8\, |\, i+j = 12\}\\\notag
\Hh_0': = \{L'-E'_{123}, E_i'-E'_{i+1}, i\ne 3,8\}
 \end{gather}
It is easy to check the analog of Lemma~\ref{le:al}, while
 Lemma~\ref{le:J8} below replaces  Lemma~\ref{le:J7}.

 \begin{lemma}\labell{le:J8} {\rm (i)} There is a complex structure $J$ on $X_8$ for which the classes $L, E_3,E_8$, $E_3',E_8'$ as well as those in $\Hh_0$ have smooth holomorphic representatives.\SSS
 
 \NI {\rm (ii)} Moreover $J$ is unique up to the choice of a rational parameter $\mu \in \C P^1\less \Ff$, where $\Ff$ is a finite set. 
 \end{lemma}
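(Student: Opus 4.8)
The plan is to follow the proof of Lemma~\ref{le:J7} as closely as possible, the essential new feature being that the plane configuration controlling $J$ now carries a one-parameter modulus. Suppose first that $J$ satisfies the hypotheses of (i). As in the case $k=7$ I would blow down the two chains $E_3,E_2,E_1\to p$ and $E_8,E_7,E_6,E_5,E_4\to q$, the first directed by the line $C_L$, to recover $\C P^2$ with its standard complex structure. The holomorphicity requirements then translate into incidence and tangency conditions for the images of the distinguished classes, which I would record through their intersection numbers with the exceptional chains $\Cc$ (rather than through naive point multiplicities): the class $L-E_{123}\in\Hh_0$ descends to a line $R$ through $p$, the class $E_8'$ descends to a rational sextic $T'$ with a triple point at $q$ and a double point at $p$ whose branches are tangent to $R$, and $E_3'$ descends to a rational quintic $Q'$ with a double point at $q$, meeting $T'$ to high order there. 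Conversely, any plane configuration $(R,T',Q')$ realising these incidences blows up to an $X_8$ carrying the required holomorphic classes, and two such $J$ are biholomorphic precisely when the configurations are $\mathrm{PGL}(3,\C)$-equivalent.

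For existence (i) I would produce such a configuration directly, imitating the construction of the nodal cubic and its osculating conic in Lemma~\ref{le:J7}. Here the role of the nodal cubic is played by the sextic $T'$: I would first fix a rational sextic with the prescribed singularities at $p$ and $q$, and then obtain $Q'$ as a limit of quintics in the appropriate linear system, letting the assigned intersection points flow together along a chosen branch of $T'$ at $q$ so as to force the high-order contact — exactly as the conic $Q$ was produced for $k=7$. For all but finitely many values of the modulus this limit is reduced and carries only the prescribed singularities, and the blow-up directed by $R$ at $p$ and by $T'$ at $q$ then recovers $(X_8,J)$ with all of $L,E_3,E_8,E_3',E_8'$ and $\Hh_0$ holomorphic.

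For the uniqueness statement (ii) the point is that, in contrast with the rigid case $k=7$, the configuration is no longer determined up to projective transformation. I would fix $T'$ up to $\mathrm{PGL}(3,\C)$ and then count the residual freedom in the incidence data. Because $X_8$ is a degree-one del Pezzo surface, $|-K|$ is a pencil of arithmetic-genus-one curves, and the $j$-invariant of this pencil (equivalently a cross-ratio of the special points of the configuration) survives as a single accessory parameter $\mu$; this is the symplectic shadow of the one-parameter family $V_{22}^m$ of the introduction. I would show that the stabilizer of $T'$ together with the remaining tangency conditions cuts the configuration space down to an open subset of $\C P^1$ with coordinate $\mu$, and define $\Ff$ to be the finite set of $\mu$ for which $T'$ or $Q'$ acquires an extra singularity, becomes reducible or non-reduced, or for which two prescribed infinitely near centres collide.

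The main obstacle is the moduli computation itself: proving that the configuration space is exactly one-dimensional — neither rigid nor of higher dimension — and exhibiting a natural coordinate $\mu$. Closely tied to this is the local singularity analysis of the degree-five and degree-six curves, where the prescribed infinitely near multiplicities are large and must be realised compatibly with all of $\Hh_0$; in particular one must verify that no positivity-of-intersection constraint is violated (the intersection of $E_3'$ with the chain at $p$ is the delicate point), which is precisely what forces the deletion of the finite bad set $\Ff$. Finally, as in the $k=7$ case, I would check that the two orbifold identifications of $(Z,J)$ furnished by $\{E_3,E_8\}$ and by $\{E_3',E_8'\}$ are compatible, so that the resulting $J$ descends to the complex structure on $Z$ needed in Proposition~\ref{prop:com4}.
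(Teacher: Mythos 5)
Your proposal inverts the logical order of the paper's argument, and in doing so it assumes precisely the statement that is hardest to prove. You begin by ``fixing a rational sextic $T'$ with the prescribed singularities at $p$ and $q$'' and then blow up; but the existence of such a sextic (a rational curve of degree $6$ with multiplicity sequence $(2,2,2)$ along the chain at $p$, $(3,2,2,2,2)$ along the chain at $q$, and the required tangencies) is exactly equivalent to the representability of $E_8'$, which is the content of part (i). No argument is offered for it, and classical existence problems for plane curves with prescribed infinitely-near singularities of this size are genuinely delicate. The paper avoids this trap by building $J$ from configurations that trivially exist --- a line $R$ through $p$ and a smooth conic $Q$ through $q$ (the conic being the image of the class $2L-E_{4\dots8}$, not of $E_8'$) --- with the blow-ups at $p$ directed by $R$ and at $q$ directed by $Q$; the representatives of $E_3'$ and $E_8'$ are then obtained \emph{a posteriori} on the blown-up surface from their nonzero Gromov--Witten invariants, with irreducibility forced by adjunction, positivity of intersections, and a separate argument (Step 1 of the paper's proof) ruling out the class $3L-2E_4-\sum_{m\ne 4}E_m$ for generic $q$. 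A second, related problem: directing the blow-up at $q$ by $T'$ is not even well defined in the paper's sense, since a directed blow-up requires an \emph{embedded} curve through the point, and $T'$ has a triple point at $q$; choosing a branch does not reproduce the multiplicity sequence $(3,2,2,2,2)$.

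The uniqueness part has a parallel gap. You propose to ``fix $T'$ up to $\mathrm{PGL}(3,\C)$'' and count residual freedom, but unlike the nodal cubic in the $k=7$ case, rational sextics with this singularity configuration are not all projectively equivalent --- the one-dimensional modulus you are trying to locate lives precisely in the $\mathrm{PGL}(3,\C)$-orbit space of these sextics, so this normalization is circular. The identification of the parameter with the $j$-invariant of the anticanonical pencil is an unproven (and unnecessary) guess. The paper's route is more elementary: given any $J'$ with the required representatives, one first shows by positivity of intersections and a decomposition analysis that the conic class $2L-E_{4\dots8}$ must have an embedded $J'$-holomorphic representative; blowing down $\Hh_0$ then exhibits $J'$ as arising from a configuration $(p,R,Q,q)$, and since all triples $(p,R,Q)$ with $R$ not tangent to $Q$ are projectively equivalent, the only residual choice is the position of the point $q$ on the conic $Q\cong\C P^1$, minus the finite bad set. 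To repair your argument you would essentially have to re-import both of these steps, at which point you are reproducing the paper's proof.
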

 \begin{proof}  
 Fix points $p\ne q$ in $\C P^2$, a line $R$ through $p$ but not $q$ and a conic $Q$ through $q$ and not $p$.  
We shall assume that the tangent line to $Q$ from  
$p$ does not go through $q$. In the following construction 
we assume that $p$ and $R$ are fixed but allow $q$ to vary on $Q$.
We shall construct $J=J_q$ on $X_8$ by blowing up 
 $p$  three times and  $q$ five times.
 The blow ups at $p$ are directed by the line $R$ as in the construction of $X_k$ after Lemma~\ref{le:phiZ}.
 Similarly, the five fold blow up at $q$ is directed by $Q$; thus
  the classes $E_4-E_5,\dots, E_7-E_8$ and $2L-E_{45678}$ are all represented by smooth curves.\MS
 
 \NI {\bf Step 1:} {\it For generic $q$, the class $3L-2E_4 - \sum_{m=1,m\ne 4}^8E_m$ is 
not represented in $(X_8,J)$.}

Let $T_q$ be a nodal cubic that is triply tangent to $R$ at $p$ and has node at $q$ with one branch $B_q$ at $q$ tangent to $Q$ to order $4$.
Such  a curve exists by the proof of Lemma~\ref{le:J7}, and is unique because its proper transform $T_q'$ under the first 
$7$ blow ups is an exceptional sphere in the class 
$3L-2E_4 - \sum_{m=1,m\ne 4}^7E_m\in \Ee(X_7)$.    
If $Q'$ is the proper transform of
$Q$ under these blowups then
$$
T_q'\cdot Q' = \bigl(3L-2E_4 - \sum_{m=1,m\ne 4}^7E_m\bigr)\cdot \bigl(2L-E_{4567}\bigr) = 1.
$$
The class $3L-2E_4 - \sum_{m=1,m\ne 4}^8E_m$ is represented in $(X_8,J)$ exactly if the point of intersection $T_q'\cap Q'$ blows down to $q$, that is, exactly if
the branch  $B_q$ is tangent to $Q$ to order $5$
at  $q$.
We claim that this does not happen for generic $q$.  Because the set $\Ff_Q$ of $q\in Q$ for which this happens is algebraic and $Q$ has dimension $1$, it suffices to show that $\Ff_Q\ne Q$.

Suppose that $q\in \Ff_Q$.   Let $Q''\ne Q$ be a conic that is tangent to $Q$ to order $4$ at $q$.  Then $B_q$ is not tangent to $Q''$ to order $5$.  Moreover, there is a projective transformation $\Phi$ of $\C P^2$ that fixes $p,R$ and takes $Q$ to $Q''$.  Let $q_0: = \Phi^{-1}(q)\in Q$.
Then the unique nodal cubic $T_{q_0}$ must coincide with 
$\Phi^{-1}(T_q)$.  Moreover, $T_{q_0}$ is not tangent to $Q$ at $q_0$ to order $5$ because $\Phi(T_{q_0})= T_q$  is not tangent to $\Phi(Q) = Q''$ at $\Phi(q_0) = q$ to order $5$ by construction. 
Hence $q_0\in Q\less \Ff_Q$.\MS
 
 \NI {\bf Step 2:} {\it For generic $q$ the classes $E_3'$ and $E_8'$ have smooth holomorphic representatives.}

Since $E_3', E_8'\in \Ee(X_8)$  they have nontrivial Gromov--Witten invariants and hence have holomorphic representatives for all $q$.  Therefore we just need to check that these representatives  are irreducible.
We will  consider representatives $S'$  for $E_8'$; the argument for $E_3'$ is similar.

 If $S'$ were not smooth it would be the union of components $S'_i$ 
in classes either of the form 
$d_iL - \sum_{k=1}^8 m_{ik}E_k$ with $d_i>0$ and $ m_{ik}\ge0$  or
with $d_i=0$ and  in
the set  $\Hh_0\cup \{E_3,E_8\}$. 
Since these classes  sum to $E_8'$, we must have $\sum d_i = 6$.  Moreover, because the curves in $\Hh_0$ are represented, as is the class $2L-E_{4\dots 8}$ (the proper transform of $Q$), any component that does not lie in $\Hh_0$
 must satisfy the conditions
\begin{gather}\labell{eq:mm}
d_i\ge m_{i1} + m_{i2}+m_{i3},\ \  2d_i\ge \sum_{i=4}^8m_{ik},\\\notag
m_{i1}\ge m_{i2}\ge m_{i3},\;\quad
m_{i4}\ge m_{i5}\ge \dots \ge m_{i8},\\\notag
\sum_k(m_{ik}^2-m_{ik}) \le 2 + d_i^2-3d_i.
\end{gather}
The first conditions above come from positivity of intersections, while the last comes from the fact that these curves 
are rational and so must satisfy the genus zero adjunction inequality
$c_1(S_i)\le 2+ (S_i)^2$. This means that if $d_i=3$ at most one of the 
$m_{ik}$ is $>1$ and that all $m_{ik} \le 2$.  In other words,
the  $m_{ik}$ (listed in decreasing order) are at most $(2,1,\dots,1)$.
Similarly if $d_i=4$ the $m_{ik}$  are at most
$(2,2,2,1,\dots,1)$ or $(3,1,\dots,1)$, while if $d_i = 5$ they are
at most
$(3,3,1,\dots,1)$ or $(3,2,2,2,1,\dots,1)$ or $(2,\dots,2,1,1)$.

Thus, if $d_i=3$ the only permissible class with all 
$m_{ik} \ne 0$ is $(3;1,1,1,2,1,\dots,1)$, where here we have listed the $m_{ik}$ in order of increasing $k$.
But, by Step 2, this element is not  represented. 
(If it were represented we could decompose $E_8'$ as twice this class 
plus  $(E_4-E_5) + \dots + (E_7-E_8) + E_8$.)

Similarly, $E_8'$ would decompose if either of the lines 
$L-E_{145} $ or $L-E_{1234}$ were represented, since $E_8'$ is the sum of 
$6(L-E_{145})$ or $2(L-E_{1234}) + 2(2L-E_{4\dots 8})$  with 
suitable classes from 
$\Hh_0\cup \{E_3,E_8\}$.
However, for generic $q$ these classes are not represented either.  The reader can now check that there are no 
permissible decomposition of $E_8'$.  For example, if one of the curves is 
in class $(4;2,2,2,1,1,1,1,1)$ one could add $(2;0,0,0,1,1,1,1,1)$, but this does not give a large enough coefficient for $E_4$. Also, because of the conditions
$m_{i1}\ge m_{i2}\ge m_{i3}$ and $
m_{i4}\ge \dots \ge m_{i8}$ it does not help to consider classes with
$m_{i1}> m_{i3}=0$ or $m_{i5}> m_{i8}= 0$ since there would have to be other elements in the decomposition with $m_{j3},m_{j8} \ne 0$,
which would make $\sum d_i$ too large. 

These two steps complete the proof of (i).\MS

\NI {\bf Step 3:} {\it Proof of } (ii).

Suppose that $J'$ is any complex structure on $X_8$ for which the classes $E_3'$ and $E_8'$ as well as those in $\Hh_0$ have smooth holomorphic representatives. Then the classes $L-E_{1234}$, $L-E_{145}$ and 
 $3L-2E_4 - \sum_{m=1,m\ne 4}^8E_m$ cannot have  
 holomorphic representatives since they have negative intersection with 
$E_8'$.  Also, any class that is represented by a rational curve
 must satisfy all the conditions in (\ref{eq:mm}) except for $2d_i\ge \sum_{k\ge 4}m_{ik}$ by positivity of intersection with 
 $\Hh_0$.   Hence 
 the  class $2L-E_{4\dots 8}\in \Ee(X_8)$ has a (unique) embedded representative because none of its decompositions 
 satisfy these conditions.   
 
 Since the classes in $\Hh_0$ are represented, there is a blow down map $
 \pi:(X_8,J')\to \C P^2$ that collapses the curves $E_k-E_{k+1}$ for $k\ne 3,8$. Let $p$ be the image of $E_1-E_2$ and $q$ the image of $E_4-E_5$.
 We define   the conic $Q$  to be the blow down of the curve in class 
 $2L-E_{4\dots 8}$.  
Next observe that
 all triples $(p,R,Q)$ 
 consisting of a conic $Q$, a point $p\notin Q$ and a line $ R$  
 through $p$ are projectively equivalent, provided that $R$ 
 is not tangent to $Q$.   Moreover the only way to blow up $\C P^2$ to a complex structure  on $X_8$ for which the curves in $\Hh_0$ as well as $2L-E_{4\dots 8}$ are represented is to perform  repeated blow ups directed  by $R$ at $p$ and  by $Q$ at some point $q\in Q$ as described at the beginning of the proof.
 Hence the only choice in the above construction is the rational parameter
 $q$.    \end{proof}

We shall denote  by $J$ the complex structure on $Z$ induced by the blow down map $(X_k,J)\to Z$ where $J$ is as constructed in the previous two lemmas. 
As explained at the beginning of \S\ref{s:2}, it is also possible 
to construct $Z$ as a toric manifold, with moment polytope as in Figure \ref{fig:2}. Let us denote the corresponding complex structures 
on $Z$ and $X_k$ by $J_T$.  As we pointed out in Remark~\ref{rmk:ZT},
$J_T$ is not equal to the complex structure $J$ in Lemmas~\ref{le:J7} and \ref{le:J8} since in the toric case the blowups at $q$ are also 
directed  by a line (rather than by the conic $Q$)  --- lines can be chosen to be invariant under the group action while conics cannot be. 
Thus the class $L-E_{4\dots k}$ always has a $J_T$ holomorphic representative.   

Both complex structures $J$ and $J_T$ on $Z$ are obtained by blowing up 
the weighted projective space $\C P^2_{123}$ at a point $q$ not on the exceptional divisor $D_1$.  Therefore the divisor $D_2$ is represented 
in both cases. (In fact, $D_1, D_2$ are the images of $E_3$ and $E_8$
respectively.)    Moreover, the complex structures $J$ and $J_T$ coincide on
$Z\less D_2$.

\begin{lemma}\labell{le:JT} For any open neighborhood $U$ of $D_2$ in $Z$ there is a closed neighborhood $V\subset U$
of $D_2$ in $Z$, and a diffeomorphism $f:(Z,J_T)\to (Z,J)$ that is
the identity in $Z\less U$ and on $D_2$ and is
a biholomorphism near $V$.
\end{lemma}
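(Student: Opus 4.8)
The plan is to localise the entire difference between $J$ and $J_T$ to a tubular neighbourhood of $D_2$, and the key reduction is this: it suffices to produce a biholomorphism $g$ from a neighbourhood of $D_2$ in $(Z,J_T)$ to a neighbourhood of $D_2$ in $(Z,J)$ that fixes $D_2$ pointwise. Granting such a $g$, I would choose nested closed neighbourhoods $V\subset U$ of $D_2$ inside its domain and interpolate between $g$ (near $V$) and the identity (outside $U$). Since $g$ restricts to the identity on the compact divisor $D_2$, it is isotopic to the identity through diffeomorphisms fixing $D_2$: conjugating $g$ by fibrewise rescalings in a tubular neighbourhood carries it to its normal linearisation, a holomorphic automorphism of the normal orbibundle $N\to D_2$, i.e. multiplication by some $\lambda\in\C^*$, and $\lambda$ is then joined to $1$ inside the connected group $\C^*$. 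Cutting this isotopy off as a function of the distance to $D_2$, by the standard localisation-of-isotopy argument, yields the desired $f$: equal to $g$ (hence a biholomorphism) near $V$, equal to the identity outside $U$, and equal to the identity on $D_2$.

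To build $g$ I would exploit that, near $D_2$, both $(Z,J)$ and $(Z,J_T)$ are weighted $(1,\ell)$-blow ups of the smooth point $q\in\C P^2_{1,2,3}$, the only difference being the directing curve (the conic $Q$ for $J$, a line for $J_T$). Fixing a holomorphic chart identifying a neighbourhood of $q$ with a ball about $0$ in $\C^2$, the two directing curves are smooth curve germs through $0$, so there is a local biholomorphism $h$ of $(\C^2,0)$ carrying one to the other and fixing $0$; $h$ lifts to a biholomorphism $\Tilde h$ between the two neighbourhoods of $D_2$. In general $\Tilde h$ does not fix $D_2$ but induces some automorphism $\phi$ of $D_2\cong\C P^1_{1,\ell}$. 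The crucial point is that this is always correctable. In the invariant coordinate $w=v/u^{\ell}$ on $D_2$ (with the $\Z_\ell$ orbifold point $p_\ell$ at $w=\infty$), the weight-preserving automorphisms $(u,v)\mapsto(au,\,dv+cu^{\ell})$ of the model act on $D_2$ by the full affine group $w\mapsto (d/a^{\ell})w+c/a^{\ell}$, which is precisely $\Aut(\C P^1_{1,\ell})$, since every such automorphism must fix the unique orbifold point $p_\ell$. Hence $\phi^{-1}$ is realised by an automorphism $\Theta$ of the model germ, and $g:=\Theta\circ\Tilde h$ is a biholomorphism of neighbourhoods of $D_2$ fixing $D_2$ pointwise.

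The main obstacle is exactly this correction step, namely knowing that the analytic germ of the weighted blow up along $D_2$ carries enough holomorphic automorphisms to realise an arbitrary automorphism of $D_2$ fixing $p_\ell$, which the explicit affine computation above supplies. I emphasise that one cannot short-circuit this by invoking Grauert's theorem to linearise the neighbourhood: for a rational curve of self-intersection $-1/\ell$ the relevant obstruction groups $H^1(D_2, N^{\otimes k}\otimes TD_2)$ need not vanish, so neither neighbourhood need be biholomorphic to the flat normal model, and one genuinely must compare the two to each other through the lifted coordinate change rather than to a common linear model. The only remaining care is orbifold bookkeeping at $p_\ell$: everything above should be done in the local uniformiser at $p_\ell$, equivariantly for the $\Z_\ell$-action, or equivalently performed upstairs on the resolution $X_k$ near the chain $C_4\cup\dots\cup C_k$ and then pushed down by $\Phi_J$. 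The isotopy-cutoff of the first paragraph descends without trouble, since all the maps involved fix $D_2$ and are $\Z_\ell$-equivariant near $p_\ell$.
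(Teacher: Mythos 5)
Your argument is correct, and it shares the paper's two\--step skeleton: first produce a biholomorphism $g$ between neighbourhoods of $D_2$ in $(Z,J_T)$ and $(Z,J)$ that fixes $D_2$ pointwise, then cut it off rel $D_2$ (the cut\--off is best run on the generating vector field of your isotopy, which vanishes along $D_2$, rather than by inserting the cut\--off function into the time parameter, since $x\mapsto g_{\rho(x)}(x)$ need not be injective; this is presumably what you mean by the standard localisation argument). Where you genuinely differ from the paper is in how $g$ is produced. The paper passes to the resolution $X_k$ and quotes the uniqueness of the complex structure near the negative chain $D_2'=C_4\cup\dots\cup C_k$, which contracts to a smooth point, so that the germ of $(Z,J)$ along $D_2$ agrees with the toric one; you stay on $Z$, lift a local biholomorphism of $(\C^2,0)$ matching the two directing curve germs to the two weighted blow\--ups, and then correct the induced automorphism of $D_2$ by the explicit model automorphisms $(u,v)\mapsto(au,dv+cu^\ell)$, which you verify realise all of $\Aut(\C P^1_{1,\ell})$. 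Your route is more elementary and self\--contained, and it makes explicit a point the paper elides: abstract uniqueness of the germ only yields a biholomorphism \emph{preserving} $D_2$, and one still has to normalise it to be the \emph{identity} on $D_2$, which is exactly what your affine\--group computation supplies. The paper's route is shorter and applies verbatim to any divisor resolved by a contractible chain (e.g.\ $D_1$). (One small imprecision in your cut\--off step: $dg$ along $D_2$ is an isomorphism from the $J_T$\--holomorphic to the $J$\--holomorphic structure on $N$, not literally multiplication by a scalar; but all you need is that it lies in the identity component of the gauge group of the smooth orbibundle, which holds since $[S^2,\C^*]$ is trivial.)

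However, your emphatic side claim --- that one cannot invoke Grauert because the groups you write as $H^1(D_2, N^{\otimes k}\otimes TD_2)$ need not vanish --- is false, though nothing in your proof depends on it. The graded obstruction spaces for linearising a neighbourhood involve powers of the \emph{conormal} bundle: they are $H^1(D_2,T_{D_2}\otimes S^mN^*)$ and $H^1(D_2,N\otimes S^mN^*)$, of orbifold degrees $1+\tfrac1\ell+\tfrac m\ell$ and $\tfrac{m-1}\ell$ respectively. Both exceed $\deg K_{D_2}=-(1+\tfrac1\ell)$, so both groups vanish by Serre duality on the genus\--zero orbifold curve. Indeed the toric germ $(Z,J_T)$ near $D_2$ \emph{is} the linear model --- the weighted $(1,\ell)$ blow\--up of $(\C^2,0)$ is the total space of the degree $-\tfrac1\ell$ tautological orbibundle over $\C P^1_{1,\ell}$ --- so both neighbourhoods are linearisable, and the ``short\--circuit'' you rule out is in essence the uniqueness\--of\--germ argument that the paper itself uses.
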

\begin{proof} Because $D_2$ is resolved in $X_k$ by a negative divisor
$D_2'$ (consisting of the curves in classes $E_m-E_{m+1} $ for $4\le m<k$ and $E_k$), there is a unique complex structure  near $D_2'$ in $X_k$ and hence a unique structure near $D_2$ in $Z$. Therefore the identity map on $D_2$ extends to a 
diffeomorphism $g: (V,J_T)\to (Z,J)$ on some neighborhood  $V$ of $D_2$ that is a biholomorphism  onto its image.   Since $g=id$ on $D_2$,
it is easy to find a diffeomorphism of $Z$ that equals $g$ on some shrinking of $V$  and the identity outside  $U$.
\end{proof}

 We are now in a position to prove  the second statement in
  Theorem~\ref{thm:main}.

\begin{prop}\labell{prop:com4}  $M_\ell$ has a $\C^*$-invariant complex
 structure when $\ell=4,5$.  This is unique up to $\C^*$-equivariant biholomorphism when $\ell=4$, 
 and depends on a rational parameter when $\ell=5$.
\end{prop}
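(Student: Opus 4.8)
The plan is to realize $M_\ell$ as a completion of the total space of a holomorphic line orbibundle over $(Z,J)$, where $J$ is the complex structure produced in Lemma~\ref{le:J7} (for $\ell=4$) and Lemma~\ref{le:J8} (for $\ell=5$). I would begin by recording the two blow-down descriptions of $(Z,J)$: since the curves in $\Hh_0$ are $J$-holomorphic and $\Hh_0=\Hh_0'$ (Lemma~\ref{le:al}), contracting them produces $Z$, and the two divisors $D_1,D_2\subset Z$ can be read off either from the images $\Phi_J(C_3),\Phi_J(C_k)$ of $E_3,E_k$ or from the images $\Phi_J(C_3'),\Phi_J(C_k')$ of $E_3',E_k'$. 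This yields two identifications $\io_\pm$ of $(Z,J)$ with $\C P^2_{1,2,3}\#\ov{\C P}\,\!^2_{1,1,\ell}$. The classes $L',E_i'$ of (\ref{eq:Cc'}) and (\ref{eq:Cc'8}) were chosen exactly so that $\eps_k'=-\eps_k$ (Lemma~\ref{le:al} and its analog for $k=8$); this is the holomorphic incarnation of the Euler-class-reversing symplectomorphism $\phi_Z$ of Lemma~\ref{le:eul}, and it is the reason Lemmas~\ref{le:J7} and~\ref{le:J8} demand holomorphic representatives for \emph{both} $E_3,E_k$ and $E_3',E_k'$: the divisors $D_2^-:=\Phi_J(C_k)$ and $D_2^+:=\Phi_J(C_k')$ will be the distinct exceptional divisors created at the intermediate critical levels $\ka=-\ell$ and $\ka=\ell$ respectively.

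Next I would form the holomorphic line orbibundle $\Ll\to(Z,J)$ whose first Chern class is $e_Z(Y)$; this is well defined by Corollary~\ref{cor:H2}, and the fibrewise scaling gives a $\C^*$-action on its total space for which the quotient of $\Ll\less\Ll_0$ is $Z$ and the log-radius recovers the moment-map coordinate $\ka$. The complex manifold $M_\ell$ is then assembled from two halves. For the bottom half $\ka\le 0$ I use the identification $\io_-$: as $\ka\searrow-\ell$ the exceptional divisor $D_2^-$ blows down and the fibres over it converge to the fixed point $x_3$, the reduced space becomes $\C P^2_{1,2,3}$ for $\ka\in(-6,-\ell)$, and everything contracts to the source $x_4$ at $\ka=-6$. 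The top half $\ka\ge 0$ is constructed symmetrically from $\io_+$, capping the fibres over $D_2^+$ at $x_2$ (as $\ka\nearrow\ell$) and contracting to the sink $x_1$ at $\ka=6$. The two halves are then glued along the central slice $Z=H^{-1}(0)/S^1$; the sign reversal $\eps_k'=-\eps_k$ guarantees that $\Ll$ matches across $\ka=0$, and Lemma~\ref{le:JT} supplies the biholomorphism near $D_2$ that makes the gluing compatible with the toric model near the singular set $\bp$.

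It then remains to verify that the completion is a smooth complex threefold carrying a global $\C^*$-action with isolated fixed points $x_1,\dots,x_4$ of the weights listed by Tolman. Each local model is a weighted $\C^3$-chart: at $x_1,x_4$ with weights $(\mp1,\mp2,\mp3)$ whose $\C^*$-quotient is $\C P^2_{1,2,3}$, and at $x_2,x_3$ with weights $(1,-1,-\ell)$ and $(1,\ell,-1)$; crucially the fibre direction supplies exactly the extra coordinate needed to unfold the $\Z_m$-orbifold points $\bp$ of the slice $Z$ into smooth points of the total space. Since $[\om_0]=-K_Z$ the completed threefold is Fano, so it admits an $S^1$-invariant K\"ahler form, and I would check that the induced $S^1$-reduced spaces reproduce the family $(Z,\om_\ka)$ of Lemma~\ref{le:T}. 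By the uniqueness half of Theorem~\ref{thm:main}(i) the resulting Hamiltonian $S^1$-manifold is equivariantly symplectomorphic to $M_\ell$, so $M_\ell$ carries the constructed $\C^*$-invariant complex structure (the $\C^*$-invariance following from the $S^1$-invariance together with the holomorphic gradient flow). For uniqueness, once $(Z,J)$ is fixed the bundle $\Ll$ is determined by $e_Z(Y)$ and the completion data are determined by $\io_\pm$ up to $\C^*$-equivariant biholomorphism; hence the answer is as rigid as $(Z,J)$ itself --- unique when $\ell=4$ by Lemma~\ref{le:J7}(ii), and depending on a single rational parameter $\mu\in\C P^1\less\Ff$ when $\ell=5$ by Lemma~\ref{le:J8}(ii).

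The step I expect to be the main obstacle is the smoothness and equivariant holomorphicity of the completion across the intermediate critical levels $\ka=\pm\ell$, where the $\C^*$-orbit type jumps along $D_2^\mp$: one must confirm that adding $x_2,x_3$ yields honest smooth charts rather than leaving the $\Z_m$-singularities of $Z$ as singularities of $M_\ell$, and that the bottom and top halves glue holomorphically and $\C^*$-equivariantly along a neighborhood of the central slice. Concretely this reduces to matching the weighted-$\C^3$ models at $x_2,x_3$ with the orbibundle $\Ll$ near the point of $\bp$ lying on $D_2$, and to verifying that the gluing realizes the identity on $(X_k,J)$ under the change of basis $L,E_i\mapsto L',E_i'$ --- the complex packaging of the Euler-class-reversing construction of Lemma~\ref{le:eul}.
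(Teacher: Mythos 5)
Your overall strategy coincides with the paper's: realize $M_\ell$ as a completion of a holomorphic line orbibundle over $(Z,J)$ with $J$ from Lemmas~\ref{le:J7} and \ref{le:J8}, and use the primed classes $L',E_i'$ (i.e.\ $\eps_k'=-\eps_k$, the involution $(x,v)\mapsto (x,1/\ov v)$ identifying $\Ll$ with $\Ll^*$) to handle the opposite end. But the step you defer as ``the main obstacle'' is exactly the crux of the proof, and your proposed resolution --- matching weighted-$\C^3$ charts at $x_2,x_3$ against $\Ll$ near the point of $\bp$ on $D_2$ --- is neither carried out nor the way the difficulty is actually overcome. The paper's key observation is that one should \emph{not} attempt to compactify $\Ll\less\Ll_0$ across the critical levels directly: when $\ka$ passes $-\ell$ the divisor $D_2\subset\Ll_0$ must be collapsed to a point while a new $\C^*$-orbit begins, and there is no evident holomorphic model for this inside $\Ll$. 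Instead the paper restricts the orbibundle picture to the middle slice $\ov\Ss_R$ (moment values in $(-4+\eps,4-\eps)$ when $\ell=4$) and caps both ends with the \emph{toric} pieces $(M^{<-3},J_T)$ and $(M^{>3},J_T)$, which already carry holomorphic $S^1$-actions extending across $\ka=\pm\ell$ and $\ka=\pm 6$. The splicing is legitimate because of Lemma~\ref{le:JT}: since $D_2$ is resolved by a chain of negative curves, the germ of complex structure near $D_2$ is unique, so there is a diffeomorphism $f$ of $Z$, equal to the identity outside $U$ and biholomorphic near $D_2$, interpolating between $J_T$ and $J$; one replaces $J_T$ by $f^*(J)$ on $U\less V$, makes the corresponding modification on the toric orbibundle $\Ll_T$ (legitimate since it has the same Euler class, hence is the same bundle), and the untouched toric structure near $\pi^{-1}(D_2)$ then carries the complex structure across the critical levels. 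You invoke Lemma~\ref{le:JT} only for a gluing at $\ka=0$, where in fact nothing needs to be glued (the paper's middle piece is a single orbibundle slice with no seam at the central level); the lemma's real role is at the critical levels, and without it, or a worked-out replacement, your construction does not yet yield a complex manifold.

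Two smaller points. First, your identification of the completed space with $M_\ell$ appeals to the uniqueness half of Theorem~\ref{thm:main}(i); this is logically available but unnecessary, since the paper arranges the construction so that the caps literally are the pieces $M^{<-3}$ and $M^{>3}$ of $M_\ell$ equipped with a modified complex structure. Second, your uniqueness argument agrees in substance with the paper's (everything reduces to Lemma~\ref{le:J7}(ii) and Lemma~\ref{le:J8}(ii) once one knows the construction has no further choices), and is fine modulo the same gap.
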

\begin{proof}  First consider the case $\ell=4$.  
We will construct $M_4$ to be a holomorphic 
manifold with a holomorphic $S^1$ action.  Then it will automatically have a $\C^*$ action.  $M_4$ will be the union of $3$ pieces corresponding to the three intervals $(-6,-3), (-4+\eps,4-\eps), (3,6)$ of values of the moment map.  We construct the middle slice first.

Denote by $\pi:X_7\to Z$ the map obtained by 
collapsing the curves in $\Hh_0$.
 Let $J$ be the complex structure on $X_7$ constructed in Lemma~\ref{le:J7} and denote also by $J$ the induced complex structure $\pi_*(J)$ on $Z$.  
 Since holomorphic line bundles 
are determined by 
elements of $H^1(\cdot,\C^*)$, it follows from Lemma~\ref{le:H2}
that there is a unique line bundle $\Ll\to (X_7,J)$
 with Euler class $\eps_7$, and that this bundle 
 descends to a holomorphic orbibundle $\ov \Ll$ over $(Z,J)$. Note that the total space of $\ov \Ll$ is smooth because  the boundary
  of the neighborhood $\Vv$ of the curves in $\Hh_0$ is smooth.

Take any Hermitian metric on $\ov \Ll$, pull it back to $\Ll$ and
then, given real valued functions $0<R_1<R_2$ on $Z$ define the slices  $\Ss_0, \ov{\Ss}_0$ by setting
\begin{eqnarray*}
&&\Ss_R = \{(x,v)\in \Ll: R_1(\pi(x))<|v|<R_2(\pi(x))\},\\
&&\ov{\Ss}_R = \{(z,v)\in \ov{\Ll}: R_1(z)<|v|<R_2(z)\}.
\end{eqnarray*}
The manifold $\ov{\Ss}_R$ (for suitably small $R_1$ and large $R_2$)
is the middle part of $(M_4,J)$. Note that it has an $S^1$ action obtained by 
multiplication by $e^{i\theta}$ in the fibers of $\Ll$.

We need to complete $\ov{\Ss}_R$ at both its ends by 
attaching holomorphic manifolds that are diffeomorphic to 
 $M^{< -3}$ and $M^{>3}$.  Let us first consider how to attach  the lower half $M^{< -3}$.  If we think in terms 
 of the $\C^*$ orbits i.e. the fibers of $\Ll$ (forgetting the moment map), we need to compactify 
 $\Ll\less \Ll_0$, replacing the zero section $\Ll_0$ by a copy of $\C^*\cup\{0\}\cup
  \{\infty\}$. Here we can think of $\{0, \infty\}$ as the 
  fixed points at levels $\ka = -6, -4$ respectively.
It is hard to see how to construct such a compactification from $\Ll$ itself.
In particular, when passing $\ka = -4$ we need to collapse the divisor $D_2$ in the zero section $\Ll_0$ to a point and also begin a new $\C^*$ orbit. 

However there is a toric model for this process:
  the discussion before Lemma~\ref{le:Z} implies 
that the set $M^{<-3}$  (considered as  a smooth manifold) can be constructed with a  toric structure 
and so has a  corresponding complex structure $J_T$. Therefore it suffices to
modify $J_T$ so that it matches the complex structure $J$ 
that we already have 
 on  $\ov{\Ss}_R$.   

As above, the slice $(M^{(-4,-3)},J_T): = \bigl(H^{-1}\bigl((-4,-3)\bigr),J_T\bigr)$
can be considered as a subset of a holomorphic orbibundle $\pi: (\Ll_T,J_T)\to (Z,J_T)$ with
Euler class that pulls back to $\eps_7$.   Since the Euler class uniquely determines the bundle,  it suffices to change $J_T$ to $f^*(J)$ in the open set $U\less V$ and make the corresponding modification to the complex structure of $\Ll_T$, where the notation is as in Lemma~\ref{le:JT}.
This defines a new complex structure in 
$M^{(-4,-3)}$ that we shall call $J$.  To see that this extends over  the whole of $M^{<-3}$,
observe that we have not changed the structure near  $\pi^{-1}(D_2)$ 
so that $J$ extends over the critical level $\ka=-4$  
to the sets $M^{(-6+\de, -3)}$ for all $\de>0$.  But the slices
$(M^{(-6+\de,-4)},J)$  are subsets of a holomorphic  orbibundle over 
$(\C P^2_{123},J)$, and $J$ is diffeomorphic to $J_T$ on  $\C P^2_{123}$.  (This follows 
by construction.)  Hence we may identify   $(M^{(-6+\de,-4)},J)$ with an appropriate subset of the canonical bundle over  
$(M^{(-6+\de,-4)},J_T)$ and therefore extend $J$ over the critical level $\ka = -6$ by the toric structure. Note that the resulting complex structure $J$  on
$M^{< -3}$ admits a holomorphic $S^1$ action given by 
multiplying by $e^{i\theta}$ in the fibers of the bundle. 

Now observe that if we choose $R_1$ suitably  we can compactify this end of $\ov{\Ss}_R $ by attaching  $(M^{< -3},J)$.   Again, the union 
$\ov{\Ss}_R \cup M^{< -3}$ has an $S^1$ action.  
Similar remarks apply to the other end. In fact the involution  $(x,v)\mapsto
(x,1/\ov v)$ (where $v\mapsto\ov v$ is complex conjugation)
takes $\Ll\to X_7$ to $\Ll^*\to X_7$.  Since $e(\Ll^*) = -\chi_7 = \chi_7'$
we can repeat the above argument replacing the classes $\Ll,E_i$ by $\Ll', E_i'$.

This constructs the $\C^*$-invariant complex structure on $M_4$.  It is unique up to $\C^*$ equivariant biholomorphism because the complex structure on the reduced space is unique and there are no other choices in the construction.
This completes the proof in the case $\ell=4$. The case $\ell=5$ is almost identical, and is left to the reader.
\end{proof}

\begin{rmk}\labell{rmk:so3}\rm  Since the fixed point data of the $S^1$ actions considered here are symmetric under the inversion $S^1\to S^1:\theta\mapsto -\theta$, the uniqueness 
result implies that there
is an $S^1$-equivariant symplectomorphism of $(M_\ell,\Om)$ that reverses the $S^1$ action. Further, because there is a unique $S^1$ invariant complex structure when $\ell=4$, this map can be taken to be a biholomorphism in this case.   (In fact the existence of such
action reversing  maps is obvious from our construction.)  However, when $\ell=5$ and 
$J_\la, \la \in 
S^2\less\{\mbox{finite set}\},$  is a generic 
 $S^1$ invariant complex structure on $M_5$ then 
  we cannot expect there to be a corresponding biholomorphic map; rather there should be a 
holomorphic  involution $\tau$ of the parameter space 
such that reversing the $S^1$ action takes $(M,J_\mu)$ to $(M,J_{\tau(\mu)})$.  In the case that
 the holomorphic $S^1$ action  $(M,\om)$ extends to 
a holomorphic action of $SO(3)$, then because the $S^1$ action is conjugate to its inverse by an element in $SO(3)$, the corresponding parameter $\mu$ is fixed 
by $\tau$.  Since the  
Mukai--Umemura $3$-fold does have a holomorphic 
$SO(3)$ action,
 it might be interesting to look at it from this point of view.
 One could also try to to analyze it using methods to study Hamiltonian $SO(3)$ actions  such as those developed  
 by R. Chang \cite{Chang}.
\end{rmk}


\begin{thebibliography}{cccccc}

\bibitem{Chang} R. Chang, Complexity one Hamiltonian $SU(2)$ and $So(3)$ actions, arXiv:math/0301168.


\bibitem{Ch} W. Chen, Pseudoholomorphic curves in $4$-orbifolds and some applications, in {\it Proceedings of Conference on Geometry and Topology of manifolds} Fields Institute.

\bibitem{Dolg} I. Dolgachev and A. Iskovskikh, Finite subgroups of the plane Cremona group, arXiv:math/0610595.

\bibitem{D} S. K.  Donaldson, K\"ahler geometry on toric manifolds and some other manifolds with large symmetry, arXiv:math/0803.0985.

\bibitem{Fu} W. Fulton, {\it Introduction to Toric Varieties}, 
Studies in Mathematics vol 131, Princeton Univ Press.

\bibitem{God}  L. Godinho, Blowing up symplectic orbifolds,
{\it Ann. Global Anal. Geom.}  {\bf 20} (2001), 117-62.

\bibitem{Gonz} E. Gonzalez, Classifying semi-free Hamiltonian $S^1$ manifolds, arXiv:math/0502364


\bibitem{IP} V.A. Iskovskikh and Yu. G. Prokhorov,
{\it Fano Varieties},  Encyclopedia of Mathematical Sciences vol 47,
Springer (1999)

\bibitem{KT} Y. Karshon and S. Tolman, Centered complexity one Hamiltonian torus actions, {\it Trans. Amer. Math. Soc.} {\bf 353} (2001), 4831--61.

\bibitem{LM}  F. Lalonde and D. McDuff, The classification of ruled
       symplectic $4$-manifolds, {\it Math. Research Letters} {\bf 3},
       (1996), 769--778.

\bibitem{LM1}  F. Lalonde and D McDuff, $J$\/-curves and the 
       classification of rational and ruled
        symplectic $4$\/-manifolds,   the Proceedings of the
       1994 Newton Institute Symplectic Geometry conference, ed C. 
       Thomas,
       Camb Univ Press (1996). 3--42.

\bibitem{LP}   F. Lalonde and M. Pinsonnault, 
      The topology of the space of symplectic balls in rational 
      $4$-manifolds,  SG/0207096, {\it Duke Math. J.} {\bf 122} 
      (2004), 347--397.

\bibitem{LT}  E. Lerman and S. Tolman, Hamiltonian Torus actions 
on symplectic orbifolds and toric varieties, dg-ga/9511008, {\it Trans. Amer. Math. Soc.} {\bf 349} (1997), 4201--4230.
 
\bibitem{LU} T.-J. Li and M. Usher, Symplectic sums and surfaces of negative square, {\it J, Symplectic Geometry} 
arXiv:math/0601540.


\bibitem{Mdef} D. McDuff,  From  symplectic deformation to  isotopy, 
     {\it Topics in Symplectic $4$-manifolds (Irvine CA 1996),} ed.    
     Stern, Internat. Press, Cambridge, MA (1998), pp 85-99.

\bibitem{Mac} D. McDuff,
      Symplectomorphism Groups and Almost Complex Structures, 
      SG/0010274, Enseignement Math {\bf 38} (2001), 1--30.


\bibitem{M} D. McDuff, Symplectic embeddings of $4$-dimensional ellipsoids, preprint 2008.

\bibitem{MSI}  D. McDuff and D.A. Salamon, {\it Introduction to 
       Symplectic Topology}, 2nd edition (1998) OUP, Oxford, UK

\bibitem{MS}  D. McDuff and D.A. Salamon,  
        {\it $J$-holomorphic curves and symplectic topology}.    
        Colloquium Publications {\bf 52}, American 
        Mathematical Society, Providence, RI, (2004).  

\bibitem{MT}  D. McDuff and S. Tolman, Topological properties of 
      Hamiltonian circle actions, SG/0404338,
     {\it International Mathematics Research Papers}.
      {\bf vol 2006},
      Article ID 72826, 1--77.
    
\bibitem{MT2}  D. McDuff and S. Tolman, Polytopes with mass linear     
       functions, Part I, arxiv:math/0807.0900.
    
\bibitem{MU} S. Mukai and H. Umemura, Minimal rational $3$-folds,
    in {\it Algebraic Geometry} ed. R. Shioda,
     Springer Lecture Notes {\bf 1016}, (1983), 490-518.
    
      
\bibitem{OO}  H. Ohta and K. Ono,  
Simple singularities and symplectic fillings, {\it Journ. Diff Geom.} 



\bibitem{Pin} M. Pinsonnault, Maximal compact tori in the 
          Hamiltonian group of $4$-dimensional symplectic manifolds,     
          preprint 2007.

\bibitem{Pin2} M. Pinsonnault, work in progress.

\bibitem{Pro} Yu. Prokhorov, Automorphism groups of Fano manifolds,
{\it Communications of the Moscow Math. Soc.} (1989),  222-3.

\bibitem{Sei} P. Seidel, Lectures on four dimensional Dehn twists,
        SG/0309012.

\bibitem{Sy} M. Symington, Symplectic rational blowdowns,
{\it J. Diff. Geom.} {\bf 50} (1998), 505--18.


\bibitem{T} S. Tolman, On a symplectic generalization of Petrie's conjecture, preprint (2007).

\bibitem{W} C.T.C. Wall, Diffeomorphisms of $4$-manifolds, 
{\it J. London Math. Soc.} 
{\bf 39} (1964), 131-40.
\end{thebibliography}
\end{document}